\numberwithin{equation}{section}
\newtheorem{definition}{Definition}
\newtheorem{lemma}{Lemma}[section]
\newtheorem{theorem}{Theorem}[section] 
\newtheorem{proposition}{Proposition}[section]
\newtheorem{remark}{Remark}[section] 
\def\u{\mbox{\boldmath $u$}}   
\title{A second-order accurate, positivity-preserving 
numerical scheme for the Poisson-Nernst-Planck-Navier-Stokes system}
\author[a]{Yuzhe Qin}
\author[b,*]{Cheng Wang}
\affil[a]{Key Laboratory of Complex Systems and Data Science of Ministry of Education $\&$ School of Mathematical Sciences, 
Shanxi University, Taiyuan 030006, China; Email: yzqin@sxu.edu.cn}
\affil[b]{Mathematics Department, University of Massachusetts, North Dartmouth, MA 02747, USA} 
\affil[*]{Corresponding author, cwang1@umassd.edu} 
\date{}
\begin{document}
\bibliographystyle{plain}
\maketitle
\begin{abstract} 
In this paper, we propose and analyze a second order accurate (in both time and space) numerical scheme for the Poisson-Nernst-Planck-Navier-Stokes system, which describes the ion electro-diffusion in fluids. In particular, the Poisson-Nernst-Planck equation is reformulated as a non-constant mobility gradient flow in the Energetic Variational Approach. The marker and cell finite difference method is chosen as the spatial discretization, which facilitates the analysis for the fluid part. 
In the temporal discretization, the mobility function is computed by a second order extrapolation formula for the sake of unique solvability analysis, while a modified Crank-Nicolson approximation is applied to the singular logarithmic nonlinear term. 
Nonlinear artificial regularization terms are added in the chemical potential part, so that the positivity-preserving property could be theoretically proved. Meanwhile, a second order accurate, semi-implicit approximation is applied to the convective term in the PNP evolutionary equation, and the fluid momentum equation is similarly computed. In addition, an optimal rate convergence analysis is provided, based on the higher order asymptotic expansion for the numerical solution, the rough and refined error estimate techniques. The following combined theoretical properties have been established for the second order accurate numerical method: (i) second order accuracy, (ii) unique solvability and positivity, (iii) total energy stability, and (iv) optimal rate convergence. A few numerical results are displayed to validate the theoretical analysis.

	\bigskip

\noindent
{\bf Key words}:
Poisson-Nernst-Planck-Navier-Stokes system, positivity-preserving property, total energy stability, optimal rate convergence analysis, higher order asymptotic expansion, rough and refined error estimates


\noindent
{\bf AMS subject classification}: \, 35K35, 35K55, 65M06, 65M12 	
\end{abstract}

\section{Introduction} 
The coupled Poisson-Nernst-Planck-Navier-Stokes (PNPNS) system is an important model to describe the diffusion process of charged particles, originated from bio-electronic application. This well-known electro-fluid model has been used to study the dynamics of electrically charged fluids, the motion of ions or molecules and their interactions under the influence of electric fields and the surrounding fluid. In electro hydrodynamics, the ionic motion with different valences suspended in a solution is driven by the fluid flow and an electric potential, which results from both an applied potential on the boundary and the distribution of charges carried by the ions. In addition, ionic diffusion is driven by the concentration gradients of the ions themselves. Conversely, fluid flow is forced by the electrical field created by the ions, which arise frequently in a large number of physical, biophysical, and industrial processes. For more details of the physical background issues of this system, we refer the readers to \cite{Martin2004, Ben2002Nonlinear, Rubinstein1990Electro, Igor2007The, Qin2023POF} and the references therein. 

Several papers have analyzed the mathematical property of PNPNS system. Based on semigroup ideas, the existence of a unique smooth local solution for smooth initial data, with non-negativity preserved for the ion concentrations, was obtained in \cite{Joseph2002Analytical}. In \cite{Markus2009Analysis}, Schmuck proved the global existence and uniqueness of weak solutions in a two or three dimensional bounded domain, with blocking boundary condition for the ions and homogeneous Neumann boundary condition for the electric potential. Besides, Bothe et al. \cite{Dieter2014Global} investigated the Robin boundary condition for the electric potential, and established the global existence and stability in two-dimensional domain. Wang et al. \cite{Wang2016SIAM} derived a hydrodynamic model of the compressible conductive fluid, so-called generalized PNPNS system, and developed a general method to prove that the system is globally asymptotically stable under small perturbations around a constant equilibrium state. They also obtained an optimal decay rate of the solution and its derivatives of any order under certain conditions. Constantin published a series of papers, such as \cite{Peter2019On,Peter2022Existence}, to analyze the global existence of smooth solutions with different boundary conditions. 

Since the ion concentration must be non-negative, it would be very important to develop a numerical scheme preserving positivity for the ion concentrations. For certain gradient flow models with a singular energy potential, such as the Flory-Huggins-Cahn-Hilliard equation, some existing works have been reported to establish the  positivity-preserving property of the associated numerical schemes~\cite{Qin2021BFS, WangchengCHNS2022, ChenCHNS2024, Chen2019Positivity, Liu2022JCP}. Meanwhile, for the PNP system, the corresponding analysis becomes more challenging, due to the lack of the standard diffusion energy in the variational energetic structure. Some efforts have been made to deal with this issue. For example, Shen and Xu developed a set of numerical schemes for the PNP equations in \cite{Shen2021Unconditionally}, and the numerical schemes are proved to be mass conservative, uniquely solvable and positivity-preserving. He et al. proposed a positivity-preserving and free energy dissipative numerical scheme for the PNP system in \cite{He2019PNP}, which could be linearly solved. Subsequently, the theoretical analysis for this linear scheme was provided in \cite{Qin2023PNP}. Moreover, Liu et al. considered the PNP system in the energetic variational formulation and proposed both the first and second order numerical schemes~\cite{Liu2021PNP, Liu2023PNP}, which preserve three theoretical properties: unique solvability/positivity-preserving, unconditional energy stability, and optimal rate convergence analysis.  

The numerical effort for the PNPNS system turns out to be even more challenging, due to the highly coupled nature between the PNP evolution and fluid motion. Tsai et al. \cite{Tsai2005Numerical} employed an artificial compressibility approach in capillary electrophoresis microchips, and tested some injection systems with different configurations. Prohl and Schmuck \cite{Andreas2010Convergent} used the implicit temporal discretization, combined with the finite element spatial approximation, to preserve the non-negativity of the ions. A projection method without non-negativity preserving was also considered in the work. He and Sun considered a few finite element schemes for the PNPNS system \cite{He2018Mixed}, which preserves the positivity and/or some form of energy dissipation under certain conditions and specific spatial discretization. Liu and Xu \cite{Liu2017Efficient} proposed a few numerical methods, with different accuracy orders, by combining various time-stepping stencils and the spectral spatial discretization. The proposed numerical schemes result in several elliptic equations, with time-dependent coefficients, to be solved at each time step. Among these proposed algorithms, only the first order one has been theoretically proved to be positivity-preserving. Meanwhile, based on the popular scalar auxiliary variable (SAV) approach, Zhou and Xu \cite{Zhou2023Efficient} proposed a few first/second-order accurate numerical schemes for the evolutional PNPNS system, in which the ion positivity is preserved.  

Of course, based on the energetic variational approach, any numerical analysis for the PNPNS system has to face three serious theoretical issues: ion concentration positivity and unique solvability, total energy stability, and optimal rate convergence estimate. In fact, most existing numerical works for the PNPNS system have addressed one or two theoretical issues, while a numerical design that combines all three theoretical properties turns out to be even more challenging than that of the PNP system. In addition, the construction of a second order scheme to preserve these three theoretical properties would be more difficult. In this article, we propose and analyze a second order numerical scheme for the PNPNS system, in which all three properties will be theoretically justified. To facilitate the numerical design, the PNP part is reformulated as a non-constant mobility $H^{-1}$ gradient flow in the energetic variational approach. The highly nonlinear and singular nature of the logarithmic energy potential has always been the essential difficulty to design a second order accurate scheme in time, while preserving the variational energetic structures. In the temporal discretization for the PNP part, a second order accurate extrapolation is taken to the mobility function, for the sake of unique solvability. In the chemical potential expansion, a modified Crank-Nicolson approximation is applied to the singular logarithmic nonlinear term, and such a treatment leads to a stability estimate in terms of the Flory-Huggins energy. Furthermore, nonlinear artificial regularization terms are added in the chemical potential expansion, which could facilitate the positivity-preserving analysis for the ion concentration variables. In the fluid convection and the convection terms for the ion concentration variables, a second order accurate, semi-implicit method is used. The coupled source terms in the fluid momentum equation is similarly computed. Meanwhile, the marker and cell (MAC) finite difference spatial discretization is used, which in turn makes the computed velocity vector divergence-free at a discrete level, so that it is orthogonal to the pressure gradient in the discrete $\ell^2$ space. This fact will also play a key role in the numerical analysis. The singular nature of the logarithmic terms, combined with the monotonicity of the numerical system, enable us to theoretically justify its unique solvability/positivity-preserving property. With such an established property, an unconditional total energy stability becomes an outcome of a careful energy estimate of the numerical system. In addition, an optimal rate convergence analysis will also be derived for the proposed scheme, which is accomplished by the higher order asymptotic expansion for the numerical solution, combined with the rough and refined error estimate techniques. In the authors' knowledge, there is first work of second order accurate numerical scheme for the PNPNS system that preserves all three theoretical properties. 

The remainder of this paper is organized as follows. In Section~\ref{sec: PNP}, we first describe the PNPNS system, and its reformulation based on EnVarA method. In Section~\ref{sec: scheme}, the second order accurate numerical scheme is constructed, based on the reformulated PNPNS system. The unique solvability/positivity-preserving property is proved in Section~\ref{sec: positivity}, and an unconditional total energy stability is established in Section~\ref{sec: stability}. Moreover, the optimal rate convergence analysis is provided in Section~\ref{sec: convergence}. A few numerical examples are presented in Section \ref{sec: validation} as well, which validates the robustness of thee proposed scheme. Finally,  some concluding remarks are made in Section \ref{sec: conclusion}.

\section{The governing equation and its energy law} \label{sec: PNP}

\subsection{Dimensional system and the energy law}

We consider the dimensionless system and omit the dimensionalization process. The general PNPNS system with $K$ species of ions in the electrolyte solution is given as follows:  
\begin{subequations}\label{eqn: dimensionless main}
	\begin{align}
		& \frac{\partial c_{q}}{\partial t} 
		+ \nabla \cdot \left(\bm{u} c_{q}\right) 
		= \frac{D_{q}}{Pe} \nabla \cdot \left( c_{q} \nabla \mu_{q} \right),  
		\label{main eqn: cq}\\ 
		& \mu_{q} = z_{q} \phi + \ln c_{q},  
		\label{main eqn: mui}\\
		& - \epsilon^{2} \Delta \phi 
		= \sum_{q=1}^{K} z_{q} c_{q},  
		\label{main eqn: phi}\\
        & Re \Big( \frac{\partial \bm{u}}{\partial t} 
        + ( \bm{u} \cdot \nabla ) \bm{u} \Big) 
        + \nabla \psi 
        = \Delta \bm{u} 
        - \sum_{q=1}^{K} z_{q} c_{q} \nabla \phi, 
		\label{main eqn: u1}\\ 
		& \nabla \cdot \bm{u} = 0, 
		\label{main eqn: nablau} 
	\end{align}
\end{subequations} 
where $c_{q}$ is the concentration of $q$-th ion, $\bm{u}$ is the velocity, $\phi$ is the electric potential, $\psi$ is the pressure, $Pe$ is the Peclet number, $D_{q}$ is the diffusion coefficient of $q$-th ion, $\mu_{q}$ is the chemical potential of $q$-th ion, $z_{q}$ is the valence of $q$-th ion, $\epsilon$ is the dielectric coefficient, and $Re$ is the Reynolds number. Periodic boundary condition could be taken, for simplicity of analysis. As an alternate choice, the following physically relevant boundary condition could also be considered,  so that the above system \eqref{eqn: dimensionless main} becomes self-contained:  
\begin{equation}\label{eqn: bdc1}
    \partial_{\bm{n}} c_{q}|_{\Gamma} = 0, \quad 
    \partial_{\bm{n}} \phi |_{\Gamma} = 0, \quad 
   ( \bm{u} \cdot \bm{n} ) |_{\Gamma} = 0, \quad 
   \partial_{\bm{n}}  ( \bm{u} \cdot \bm{\tau} ) |_{\Gamma} = 0 , \quad 
    \partial_{\bm{n}} \psi |_{\Gamma} = 0. 
\end{equation}

The total energy dissipation property has been derived in an existing work~\cite{Markus2009Analysis}. 

\begin{theorem}  \cite{Markus2009Analysis} [Total energy law] \label{thm: energy law}
The following energy dissipation law is satisfied for system \eqref{eqn: dimensionless main}: 
\begin{equation} 
    \frac{\mathrm{d}E_{total}}{\mathrm{d}t}  \le 0 , \quad 
    E_{total} :=  \int_{\Omega} 
    \Big( \Big(\sum_{q=1}^{K} c_{q} (\ln c_{q} - 1)
    + \frac{\epsilon^{2}}{2} |\nabla \phi|^{2} \Big)
    + \frac{Re}{2} |\bm{u} |^{2} \Big) \mathrm{d}\bm{x} . 
\end{equation}
\end{theorem}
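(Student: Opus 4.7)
The plan is to test each evolution equation against its associated variational quantity and then exploit the incompressibility constraint together with Poisson's equation to cancel all coupling terms. First, I would multiply the Nernst--Planck equation \eqref{main eqn: cq} by the chemical potential $\mu_q = z_q \phi + \ln c_q$ and integrate over $\Omega$. The identity $\partial_t c_q \cdot \ln c_q = \partial_t(c_q \ln c_q - c_q)$ turns the logarithmic part of the time-derivative contribution into $\frac{d}{dt}\int_\Omega c_q(\ln c_q - 1)\,d\bm{x}$, while summation over $q$ combined with the Poisson relation $\sum_q z_q c_q = -\epsilon^2 \Delta \phi$ and an integration by parts converts $\sum_q\int_\Omega z_q \phi\, \partial_t c_q\,d\bm{x}$ into $\frac{\epsilon^2}{2}\frac{d}{dt}\int_\Omega |\nabla\phi|^2\,d\bm{x}$. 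The diffusive right-hand side, after integration by parts, produces the non-positive dissipation $-\sum_q \frac{D_q}{Pe}\int_\Omega c_q|\nabla\mu_q|^2\,d\bm{x}$, which is well defined precisely because $c_q\ge 0$.

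Next, I would treat the convection term $\int_\Omega \nabla\cdot(\bm{u} c_q)\mu_q\,d\bm{x}$. Integration by parts gives $-\int_\Omega \bm{u}\cdot c_q\nabla\mu_q\,d\bm{x}$; expanding $c_q\nabla\mu_q = z_q c_q \nabla\phi + \nabla c_q$ and using $\nabla\cdot\bm{u}=0$ to kill the $\int_\Omega \bm{u}\cdot\nabla c_q\,d\bm{x}$ piece leaves exactly $-\int_\Omega \bm{u}\cdot\sum_q z_q c_q\nabla\phi\,d\bm{x}$, i.e., the negative of the Lorentz-force inner product. Then I would test the momentum equation \eqref{main eqn: u1} against $\bm{u}$: the inertial term $Re\int_\Omega(\bm{u}\cdot\nabla)\bm{u}\cdot\bm{u}\,d\bm{x}$ vanishes by $\nabla\cdot\bm{u}=0$ and the boundary conditions in \eqref{eqn: bdc1} (or periodicity), the pressure term $\int_\Omega \nabla\psi\cdot\bm{u}\,d\bm{x}$ similarly vanishes, and $\int_\Omega \Delta\bm{u}\cdot\bm{u}\,d\bm{x} = -\int_\Omega|\nabla\bm{u}|^2\,d\bm{x}$.

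Summing the two tested equations, the coupling term $+\int_\Omega \bm{u}\cdot\sum_q z_q c_q\nabla\phi\,d\bm{x}$ from the PNP side cancels the Lorentz contribution $-\int_\Omega \sum_q z_q c_q\nabla\phi\cdot\bm{u}\,d\bm{x}$ from the momentum side, yielding
\begin{equation*}
\frac{dE_{total}}{dt} = -\sum_{q=1}^K \frac{D_q}{Pe}\int_\Omega c_q|\nabla\mu_q|^2\,d\bm{x} - \int_\Omega|\nabla\bm{u}|^2\,d\bm{x} \le 0.
\end{equation*}

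The main technical obstacle is making the boundary terms disappear cleanly: although periodic boundary conditions are trivial, under \eqref{eqn: bdc1} one must verify that every integration by parts introduces a boundary integral that vanishes either because $\partial_{\bm n}c_q = \partial_{\bm n}\phi = 0$, because $\bm{u}\cdot\bm{n}=0$, or because the tangential viscous traction $\partial_{\bm n}(\bm{u}\cdot\bm\tau)$ pairs with $\bm{u}\cdot\bm\tau$ under the slip condition. A subsidiary concern is the rigorous justification of the identity $\partial_t c_q\cdot \ln c_q = \partial_t(c_q\ln c_q - c_q)$ when $c_q$ may approach zero, which in the formal derivation is harmless but would be handled by working with smooth solutions as in \cite{Markus2009Analysis} and appealing to the cited existence theory.
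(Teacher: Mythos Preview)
Your proposal is correct and follows the standard variational energy estimate for this class of coupled systems. The paper itself does not supply a proof of Theorem~\ref{thm: energy law}; it merely cites the result from \cite{Markus2009Analysis}, so there is no in-paper argument to compare against. Your approach---testing \eqref{main eqn: cq} against $\mu_q$, testing \eqref{main eqn: u1} against $\bm{u}$, and using incompressibility plus the Poisson relation to cancel the coupling term $\int_\Omega \bm{u}\cdot\sum_q z_q c_q\nabla\phi\,d\bm{x}$---is precisely the standard derivation and yields the claimed dissipation identity with the explicit rate you wrote down.
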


\begin{remark}
For the PNPNS system with a periodic boundary condition, the energy dissipation law is also valid; the technical details are left to interested readers. 
\end{remark}

\subsection{Reformulated system}
For the sake of numerical convenience, we consider the two-particle PNPNS system, namely, $p$ for positive ion and $n$ for negative ion. Meanwhile, the dimensionless constants will not cause any essential difficulty in the numerical analysis, so that a uniform value is set for all these constants: $D_{q} = 1$, $Pe = 1$, $\epsilon = 1$, $Re = 1$ and $z_{p} = 1$, $z_{n} = -1$. In turn, system \eqref{eqn: dimensionless main} could be equivalently rewritten as the following simplified form: 
\begin{subequations}\label{eqn: main system}
	\begin{align}
		& \partial_{t} p 
		+ \nabla \cdot \left( \bm{u} p \right) 
		= \nabla \cdot \left( p \nabla \mu_{p} \right), 
		\label{main: p}\\
  	& \partial_{t} n 
		+ \nabla \cdot \left( \bm{u} n \right) 
		= \nabla \cdot \left( n \nabla \mu_{n} \right), 
		\label{main: n}\\
		&\mu_{p} = \ln p + \left( - \Delta \right)^{-1} \left( p - n \right), 
		\label{main: mu_p}\\
        &\mu_{n} = \ln n + \left( - \Delta \right)^{-1} \left( n - p \right), 
		\label{main: mu_n}\\
        & \partial_{t} \bm{u} 
        + ( \bm{u}\cdot \nabla ) \bm{u} 
        + \nabla \psi  
        = \Delta \bm{u} 
        - p \nabla \mu_{p}
        - n \nabla \mu_{n}, 
		\label{main: u}\\ 
		& \nabla \cdot \bm{u} = 0 .  
		\label{main: nabla u} 
	\end{align}
\end{subequations} 
Meanwhile, either the periodic boundary condition, or the homogeneous physical boundary condition, could be imposed: 
\begin{equation}\label{eqn: bdc}
    \partial_{\bm{n}} p |_{\Gamma} = 0, \quad 
    \partial_{\bm{n}} n |_{\Gamma} = 0, \quad 
    \partial_{\bm{n}} \phi |_{\Gamma} = 0, \quad 
    ( \bm{u} \cdot \bm{n} ) |_{\Gamma} = 0, \quad 
   \partial_{\bm{n}}  ( \bm{u} \cdot \bm{\tau} ) |_{\Gamma} = 0 , \quad 
    \partial_{\bm{n}} \psi |_{\Gamma} = 0. 
\end{equation}
\begin{remark}
    System \eqref{eqn: main system} is equivalent to the original system \eqref{eqn: dimensionless main}, with an introduction of a new pressure function $\tilde{\psi} = \psi - \sum\limits_{q=1}^{K}c_{q}$. Such a new variable is physically relevant, since $\sum\limits_{q=1}^{K}c_{q}$ could be regarded as the osmotic pressure. For the sake of convenience, we omit the $\tilde{\cdot}$ symbol and still use $\psi$ to represent the pressure function. 
\end{remark}

\begin{theorem}\label{thm: energy dissipation}
    The following energy dissipation law is valid for system \eqref{eqn: main system}: 
    \begin{equation} 
    \frac{\mathrm{d}E_{total}}{\mathrm{d}t} \le 0, \quad 
    E_{total} :=  \int_{\Omega} 
    \Big( p (\ln p - 1 ) + n ( \ln n - 1 )
    + \frac{1}{2} |\nabla \phi|^{2} 
    + \frac{1}{2} |\bm{u} |^{2} \Big) \mathrm{d}\bm{x} .  
    \label{energy dissipation-1} 
\end{equation}
\end{theorem}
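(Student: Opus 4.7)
The plan is to differentiate $E_{total}$ in time, express every contribution in terms of $\mu_p,\mu_n,\bm{u}$, and show that the coupling terms between the Nernst-Planck transport and the Lorentz-like source in the momentum equation cancel exactly, leaving only manifestly non-positive dissipation. A preliminary observation, used throughout, is that the Poisson equation $-\Delta\phi = p-n$ together with \eqref{main: mu_p}--\eqref{main: mu_n} yields the clean identities $\mu_p = \ln p + \phi$ and $\mu_n = \ln n - \phi$, so $\phi$ never has to be treated as a third independent variable.

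First, I would handle the entropy and electric-energy parts. Since $\frac{d}{dt}[p(\ln p - 1)] = \ln p\,\partial_t p$, and analogously for $n$, and since integration by parts combined with $-\Delta(\partial_t\phi) = \partial_t p - \partial_t n$ gives $\frac{d}{dt}\int \tfrac12|\nabla\phi|^2 = \int \phi(\partial_t p - \partial_t n)$, the first three pieces of $\dot E_{total}$ regroup as
\begin{equation*}
\int \mu_p\,\partial_t p\,\mathrm{d}\bm{x} + \int \mu_n\,\partial_t n\,\mathrm{d}\bm{x}.
\end{equation*}
Substituting \eqref{main: p}--\eqref{main: n} and integrating by parts (the boundary contributions vanish for either the periodic or the homogeneous boundary conditions \eqref{eqn: bdc}, because $\bm{u}\cdot\bm{n}=0$ kills the convective flux and $\partial_{\bm{n}}p=\partial_{\bm{n}}n=\partial_{\bm{n}}\phi=0$ kills the diffusive flux), this becomes $\int p\,\bm{u}\cdot\nabla\mu_p + \int n\,\bm{u}\cdot\nabla\mu_n - \int p|\nabla\mu_p|^2 - \int n|\nabla\mu_n|^2$.

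Next, I would treat the kinetic energy. Multiplying \eqref{main: u} by $\bm{u}$ and integrating, the convective term $(\bm{u}\cdot\nabla)\bm{u}$ integrates to zero by $\nabla\cdot\bm{u}=0$ together with the boundary condition $\bm{u}\cdot\bm{n}=0$, and the pressure term $\int \bm{u}\cdot\nabla\psi$ vanishes for the same reason; the viscous term yields $-\int|\nabla\bm{u}|^2$, and the electric source yields $-\int \bm{u}\cdot(p\nabla\mu_p + n\nabla\mu_n)$. Adding this to the previous expression, the two coupling contributions $\int p\,\bm{u}\cdot\nabla\mu_p + \int n\,\bm{u}\cdot\nabla\mu_n$ and $-\int \bm{u}\cdot(p\nabla\mu_p + n\nabla\mu_n)$ cancel identically, and one obtains
\begin{equation*}
\frac{\mathrm{d}E_{total}}{\mathrm{d}t} \;=\; -\int_{\Omega} p\,|\nabla\mu_p|^2\,\mathrm{d}\bm{x} - \int_{\Omega} n\,|\nabla\mu_n|^2\,\mathrm{d}\bm{x} - \int_{\Omega} |\nabla\bm{u}|^2\,\mathrm{d}\bm{x}.
\end{equation*}
The right-hand side is $\le 0$ provided $p,n\ge 0$, which is part of the admissible class of strong solutions for the PNPNS system.

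The main subtlety to flag is the exact cancellation of the electro-mechanical coupling: it is precisely the variational form $-p\nabla\mu_p - n\nabla\mu_n$ of the Lorentz-type force (equivalent, modulo the pressure redefinition mentioned in the remark preceding the theorem, to $-(p-n)\nabla\phi$) that produces matching boundary-free terms with the convective fluxes $\nabla\cdot(\bm{u}p)$ and $\nabla\cdot(\bm{u}n)$. Keeping careful track of which integration-by-parts boundary contributions are killed by which component of \eqref{eqn: bdc} is the only bookkeeping step that requires care; nothing else in the argument is delicate.
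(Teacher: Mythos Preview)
Your argument is correct and is exactly the standard energy computation the paper has in mind; indeed the paper does not write out a proof at all, merely stating that it ``is essentially the same as the one in Theorem~\ref{thm: energy law}'' and leaving the details to the reader. Your derivation fills in precisely those details---the regrouping via $\mu_p=\ln p+\phi$, $\mu_n=\ln n-\phi$, the cancellation between the convective fluxes and the electro-mechanical force, and the handling of boundary terms---and arrives at the expected dissipation identity $\dot E_{total}=-\int p|\nabla\mu_p|^2-\int n|\nabla\mu_n|^2-\int|\nabla\bm{u}|^2$.
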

The proof of the above inequality is essentially the same as the one in Theorem \ref{thm: energy law}. The technical details are left to the interested readers.

\section{The second order accurate numerical scheme} \label{sec: scheme}

\subsection{The finite difference spatial discretization}
The numerical scheme is based on the equivalent reformulation system \eqref{eqn: main system}. 
For simplicity of presentation, 
we consider a two-dimensional domain $\Omega=(0,L_{x})\times(0,L_{y})$, 
with $L_{x}=L_{y}=L>0$. Let $N$ be a positive integer such that $h = L/N$, 
which stands for the spatial mesh size. 
All the scalar variables, such as ion concentration $c_{q}$, 
electric potential $\phi$, chemical potential $\mu_{q}$ and pressure $\psi$, 
are evaluated at the cell-centered mesh points: $((i + 1/2)h, ( j + 1/2)h)$, 
at the component-wise level. 
In this section, we use $f$ to represent the scalar variable, 
and $\bm{v}$ as the vector variable. 
In turn, the discrete gradient of $f$ is evaluated at the mesh points 
$(ih, ( j + 1/2)h),((i + 1/2)h, jh)$, respectively: 
\begin{equation}
    (D_{x}f)_{i,j+\frac{1}{2}} 
    = \frac{f_{i+\frac{1}{2},j+\frac{1}{2}}-f_{i-\frac{1}{2},j+\frac{1}{2}}}{h}, \quad 
    (D_{y}f)_{i+\frac{1}{2},j} 
    = \frac{f_{i+\frac{1}{2},j+\frac{1}{2}}-f_{i+\frac{1}{2},j-\frac{1}{2}}}{h} .  
\end{equation}
Similarly, the wide-stencil differences for cell centered functions could be introduced as
\begin{equation}
    (\tilde{D}_{x}f)_{i+\frac{1}{2},j+\frac{1}{2}} 
    = \frac{f_{i+\frac{3}{2},j+\frac{1}{2}}-f_{i-\frac{1}{2},j+\frac{1}{2}}}{2h}, \quad 
    (\tilde{D}_{y}f)_{i+\frac{1}{2},j+\frac{1}{2}} 
    = \frac{f_{i+\frac{1}{2},j+\frac{3}{2}}-f_{i+\frac{1}{2},j-\frac{1}{2}}}{2h}. 
\end{equation}
The five-point Laplacian takes a standard form. 
Meanwhile, a staggered grid is used for the velocity field, 
in which the individual components of a given velocity, 
say, $\bm{v} = (v^{x},v^{y})$, are defined at the east-west cell edge points $(ih, ( j + 1/2)h)$, 
and the north-south cell edge points $((i + 1/2)h, jh)$, respectively. 
This staggered grid is also known as the marker and cell (MAC) grid; 
it was first proposed in \cite{Harlow1965MAC} to deal with the incompressible Navier-Stokes equations, 
and the detailed analyses have been provided in \cite{E2002MAC,Wang2000MAC}, etc. 

The discrete divergence of $\bm{v} = (v^x, y^y )^T$ is defined at the cell center points $((i+1/2) h,(j+1/2) h)$ as follows:
\begin{equation}
    \left(\nabla_{h} \cdot \bm{v}\right)_{i+1/2, j+1/2}
    :=\left(D_{x} v^{x}\right)_{i+1/2, j+1/2}+\left(D_{y} v^{y}\right)_{i+1/2, j+1/2}.
\end{equation}
One key advantage of the MAC grid approach is that the discrete divergence of the velocity vector will always be identically zero at every cell center point. Such a divergence-free property comes from the special structure of the MAC grid and assures that the velocity field is orthogonal to the corresponding pressure gradient at the discrete level; also see reference~\cite{E2002MAC}.

For $\bm{u}=\left(u^{x}, u^{y}\right)^{T}, \bm{v}=\left(v^{x}, v^{y}\right)^{T}$, evaluated at the staggered mesh points $\left(x_{i}, y_{j+1/2}\right)$, $\left(x_{i+1/2}, y_{j}\right)$, respectively, and the cell centered variable $f$, the following terms are computed as
\begin{subequations}
    \begin{align}
        \bm{u} \cdot \nabla_{h} \bm{v} 
        & =\left(\begin{array}{l}
        u_{i, j+1/2}^{x} \tilde{D}_{x} v_{i, j+1/2}^{x}
        +\mathcal{A}_{x y} u_{i,j+1/2}^{y} \tilde{D}_{y} v_{i, j+1/2}^{x} \\
        \mathcal{A}_{x y} u_{i+1/2, j}^{x} \tilde{D}_{x} v_{i+1/2, j}^{y}
        +u_{i, j+1/2}^{y} \tilde{D}_{y} v_{i+1/2, j}^{y}
        \end{array}\right), \\
        \nabla_{h} \cdot\left(\bm{v} \bm{u}^{T}\right) 
        & =\left(\begin{array}{l}
        \tilde{D}_{x}\left(u^{x} v^{x}\right)_{i, j+1/2}
        +\tilde{D}_{y}\left(\mathcal{A}_{x y} u^{y} v^{x}\right)_{i, j+1/2} \\
        \tilde{D}_{x}\left(\mathcal{A}_{x y} u^{x} v^{y}\right)_{i+1/2, j}
        +\tilde{D}_{y}\left(u^{y} v^{y}\right)_{i+1/2, j}
        \end{array}\right), \\
        \mathcal{A}_{h} f \nabla_{h} \mu & =\left(\begin{array}{l}
        \left.\left(D_{x} \mu \cdot \mathcal{A}_{x} f\right)_{i, j+1/2}\right)_{i, j+1/2} \\
        \left.\left(D_{y} \mu \cdot \mathcal{A}_{y} f\right)_{i+1/2, j}\right)_{i+1/2, j}
        \end{array}\right), \\
        \nabla_{h} \cdot\left(\mathcal{A}_{h} f \bm{u}\right) 
        & =D_{x}\left(u^{x} \mathcal{A}_{x} f\right)_{i+1/2,j+1/2}
        +D_{y}\left(u^{y} \mathcal{A}_{y} f\right)_{i+1/2,j+1/2},
    \end{align}
\end{subequations}
in which the following averaging operators have been employed:
\begin{subequations}
    \begin{align}
        \mathcal{A}_{x y} u_{i+1/2, j}^{x} 
        & =\frac{1}{4}\left(u_{i, j-1/2}^{x}+u_{i, j+1/2}^{x}
        +u_{i+1, j-1/2}^{x}+u_{i+1, j+1/2}^{x}\right), \\
        \mathcal{A}_{x} f_{i, j+1/2} 
        & =\frac{1}{2}\left(f_{i-1/2, j+1/2}+f_{i+1/2, j+1/2}\right) .
    \end{align}
\end{subequations}
A few other average terms, such as $\mathcal{A}_{x y}u_{i,j+1/2}^{y},\mathcal{A}_{y} f_{i+1/2,j}$, could be defined in the same manner.

\begin{definition}
    For any pair of variables $u^{a}, u^{b}$ which are evaluated at the mesh points $(i, j+1/2)$, (such as $u, D_{x} f, D_{x} \mu, D_{x} p$, et cetera.), the discrete $\ell^{2}$-inner product is defined by
    \begin{equation}
        \langle u^{a}, u^{b} \rangle_{A}
        =h^{2} \sum_{j=1}^{N} \sum_{i=1}^{N} u_{i, j+1/2}^{a} u_{i, j+1/2}^{b} ; 
    \end{equation}
for any pair of variables $v^{a}$, $v^{b}$ which are evaluated at the mesh points $(i+1/2,j)$ (such as $v, D_{y} f, D_{y} \mu, D_{y} p$, et cetera.), the discrete $\ell^{2}$-inner product is defined by
    \begin{equation}
        \langle v^{a}, v^{b} \rangle_{B}
        =h^{2} \sum_{j=1}^{N} \sum_{i=1}^{N} v_{i+1/2, j}^{a} v_{i+1/2,j}^{b} ; 
    \end{equation}
for any pair of variables $f^{a}$, $f^{b}$ which are evaluated at the mesh points $(i+1/2,j+1/2)$, the discrete $\ell^{2}$-inner product is defined by 
    \begin{equation}
        \langle f^{a}, f^{b} \rangle_{C}
        =h^{2} \sum_{j=1}^{N} \sum_{i=1}^{N} f_{i+1/2, j+1/2}^{a} f_{i+1/2,j+1/2}^{b} . 
    \end{equation}
In addition, for two velocity vector $\bm{u}=\left(u^{x}, u^{y}\right)^{T}$ and $\bm{v}=\left(v^{x}, v^{y}\right)^{T}$, we denote their vector inner product as
    \begin{equation}
        \langle\bm{u}, \bm{v}\rangle_{1}
        =\left\langle u^{x}, v^{x}\right\rangle_{A}+\left\langle u^{y}, v^{y}\right\rangle_{B}.
    \end{equation} 
    The associated $\ell^{2}$ norms, namely, $\|\cdot\|_{2}$ norm, 
    can be defined accordingly. 
    It is clear that all the discrete $\ell^{2}$ inner products defined above are second order accurate. 
    In addition to the standard $\ell^{2}$ norm, 
    we also introduce the $\ell^{p}, 1 \leq p<\infty$, and $\ell^{\infty}$ norms for a grid function $f$ evaluated at mesh points $(i+1/2, j+1/2)$:
    \begin{equation}
        \|f\|_{\infty}:=\max _{i,j}\left|f_{i+1/2,j+1/2}\right|,~ 
        \|f\|_{p}:=\Big( h^{2}\sum_{i,j=1}^{N}\left|f_{i+1/2,j+1/2}\right|^{p} \Big)^{\frac{1}{p}}, ~1 \leq p<\infty .
    \end{equation}
\end{definition}

Meanwhile, the discrete average is denoted as $\overline{f} := \frac{1}{| \Omega |} \langle f , 1 \rangle_C$, for any cell centered function $f$. For the convenience of the later analysis, an $\langle \cdot , \cdot \rangle_{-1,h}$ inner product and $\| \cdot \|_{-1, h}$ norm need to be introduced, for any $\varphi  \in \mathring{\mathcal C}_{\Omega} := \left\{ f \middle| \ \langle f , 1 \rangle_C = 0 \right\}$:  
	\begin{equation} 
\langle \varphi_1, \varphi_2  \rangle_{-1,h} = \langle \varphi_1 ,  (-\Delta_h )^{-1} \varphi_2 \rangle_C , \quad \| \varphi  \|_{-1, h } = ( \langle \varphi ,  ( - \Delta_h )^{-1} (\varphi) \rangle_C )^\frac12 ,
	\end{equation} 
where the operator $\Delta_h$ is equipped with either periodic or discrete homogeneous Neumann boundary condition.

\begin{lemma} \cite{WangchengCHNS2022, WangchengCHNS2024} 
    For two discrete grid vector functions $\bm{u}=\left(u^{x}, u^{y}\right), \bm{v}=\left(v^{x}, v^{y}\right)$, where $u^{x}, u^{y}$ and $v^{x}, v^{y}$ are defined on east-west and north-south respectively, and two cell centered functions $f, g$, the following identities are valid, if $\bm{u}, \bm{v}$ are implemented with homogeneous Dirichlet boundary condition and homogeneous Neumann boundary condition is imposed for $f$ and $g$:
\begin{subequations}
    \begin{align}
        & \left\langle\bm{v}, \bm{u} \cdot \nabla_{h} \bm{v}\right\rangle_{1}
        +\left\langle\bm{v}, \nabla_{h} \cdot\left(\bm{v} \bm{u}^{T}\right)\right\rangle_{1}=0, 
        \label{lem: discrete inner} \\
        & \left\langle\bm{u}, \nabla_{h} f\right\rangle_{1}=0, \quad \text {if} \quad \nabla_{h} \cdot \bm{u}=0, 
        \label{lem: discrete inner of u and f}\\
        - & \left\langle\bm{v}, \Delta_{h} \bm{v}\right\rangle_{1}=\left\|\nabla_{h} \bm{v}\right\|_{2}^{2}, \\
        - & \left\langle f, \Delta_{h} f\right\rangle_{C}=\left\|\nabla_{h} f\right\|_{2}^{2} ,\\
        - & \left\langle g, \nabla_{h} \cdot\left(\mathcal{A}_{h} f \bm{u}\right)\right\rangle_{C} 
        = \left\langle\bm{u}, \mathcal{A}_{h} f \nabla_{h} g\right\rangle_{1}. 
        \label{lem: discrete inner of g and afu}
    \end{align}
\end{subequations}
The same conclusion is true if all the variables are equipped with periodic boundary condition. 
\end{lemma}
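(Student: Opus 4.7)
The plan is to verify each of the five identities by unfolding the discrete inner products and finite-difference operators at the component level, then applying discrete summation by parts --- which is simply a rearrangement of finite sums --- and invoking the prescribed boundary conditions to discard any edge contributions that arise. Throughout, the key structural features are that cell-centered differences $D_x, D_y$ and the edge-to-center averages $\mathcal{A}_x, \mathcal{A}_y$ are mutually adjoint (up to boundary terms) in the appropriate staggered pairings of the $\langle \cdot, \cdot \rangle_A$, $\langle \cdot, \cdot \rangle_B$, and $\langle \cdot, \cdot \rangle_C$ inner products.

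Identities (3) and (4) are the cleanest: each reduces to a one-line summation by parts in each Cartesian direction, with the homogeneous Dirichlet or Neumann condition, combined with the MAC alignment, killing the boundary terms outright. Identity (2) follows the same pattern: I expand $\langle \bm{u}, \nabla_h f \rangle_1 = \langle u^x, D_x f \rangle_A + \langle u^y, D_y f \rangle_B$, summation by parts transfers the differences onto $u^x, u^y$, the MAC staggering with $\bm{u}\cdot\bm{n}|_{\Gamma}=0$ (or periodicity) ensures no boundary contribution, and the result equals $-\langle f, \nabla_h \cdot \bm{u}\rangle_C$, which vanishes by the divergence-free assumption. Identity (5) is an analogous one-step summation by parts in which $D_x$ moves off of $u^x \mathcal{A}_x f$ onto $g$; the averaging operator $\mathcal{A}_x f$ remains as a pointwise multiplier and reassembles into $\mathcal{A}_h f \nabla_h g$ on the right-hand side.

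Identity (1) is the main technical obstacle, since it is the discrete analogue of the skew-symmetry $\int \bm{v}\cdot(\bm{u}\cdot\nabla)\bm{v} + \int \bm{v}\cdot\nabla\cdot(\bm{v}\bm{u}^T) = 0$ that normally rests on $\nabla\cdot\bm{u}=0$, but here must be established purely algebraically from the discrete definitions of $\bm{u}\cdot\nabla_h\bm{v}$ and $\nabla_h\cdot(\bm{v}\bm{u}^T)$ built from the wide-stencil operators $\tilde D_x, \tilde D_y$ and the corner average $\mathcal{A}_{xy}$. My strategy is to expand both forms fully and pair up matching summands. For the non-averaged terms such as $v^x\cdot u^x \tilde D_x v^x$ and $v^x\cdot \tilde D_x(u^x v^x)$, a discrete summation by parts in $x$ converts their sum into a telescoping difference of $(v^x)^2 u^x$, which vanishes under the stated boundary conditions. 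For the cross terms involving $\mathcal{A}_{xy} u^y\,\tilde D_y v^x$ and $\tilde D_y(\mathcal{A}_{xy} u^y v^x)$, I plan to exploit the self-adjointness of $\mathcal{A}_{xy}$ across the two edge grids together with the skew-adjointness of $\tilde D_y$; these pair up to produce another telescoping sum that vanishes at the boundary. The delicate point is the bookkeeping: tracking that the averaging stencil $\mathcal{A}_{xy}$ and the wide-stencil difference $\tilde D$ intertwine correctly on the staggered indices, so that every interior term cancels in pairs and every boundary term is annihilated by the prescribed boundary data. This index-level verification, rather than any deep analytic content, is what makes (1) the hardest of the five identities.
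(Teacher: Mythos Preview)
The paper does not actually prove this lemma; it merely states it with citations to \cite{WangchengCHNS2022, WangchengCHNS2024}, where the proofs are carried out. So there is no ``paper's own proof'' to compare against. Your proposal is the standard route and matches what those references do: identities (2)--(5) are one-line summation-by-parts calculations on the MAC grid, and identity (1) is the discrete skew-symmetry of the advective form, established by expanding the wide-stencil and $\mathcal{A}_{xy}$-averaged terms and pairing them into telescoping sums that vanish under the boundary conditions. Your identification of (1) as the only nontrivial case, and of the $\mathcal{A}_{xy}$--$\tilde D$ index bookkeeping as the crux, is accurate.
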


The following Poincar\'e-type inequality will be useful in the later analysis.

\begin{proposition} \label{prop:1} 
\begin{enumerate}
    \item There are constants $C_{0} , \, \breve{C}_0 > 0$, independent of $h > 0$, such that 
    $\|f \|_{2} \leq C_{0} \|\nabla_{h}f \|_{2}$, 
    $\|f \|_{-1, h} \leq C_0 \| f \|_{2}$, $\| f \|_2 \le \breve{C}_0 h^{-1} \| f \|_{-1, h}$, 
    for all $f \in \mathring{\mathcal{C}}_{\Omega}:= \{f| \left\langle f, 1\right\rangle_{C} = 0\}$.  
    \item For a velocity vector $\bm{v}$, with a discrete no-penetration boundary condition $\bm{v}\cdot\bm{n} = 0$ on $\partial \Omega$, 
    a similar Poincar\'e inequality is also valid: $\left\|\bm{v}\right\|_{2} \leq C_{0}\left\|\nabla_{h} \bm{v}\right\|_{2}$, 
    with $C_{0}$ only dependent on $\Omega$.
    \end{enumerate} 
\end{proposition}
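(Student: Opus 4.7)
The plan is to handle the cell-centered estimates in Part 1 first, treating the standard discrete Poincaré inequality as the workhorse and deriving the two $\|\cdot\|_{-1,h}$ estimates from it by exploiting the definition of the negative Sobolev norm as an $\ell^2$ pairing against the discrete Neumann/periodic inverse Laplacian. Part 2 will then be reduced to a componentwise application of the scalar Poincaré inequality on the MAC grid, since the rectangular geometry turns the no-penetration condition into a genuine Dirichlet condition (on two opposite edges) for each scalar component $u^x, u^y$.

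For the first inequality $\|f\|_2 \le C_0 \|\nabla_h f\|_2$ on $\mathring{\mathcal{C}}_\Omega$, I would invoke the spectral decomposition of the discrete Laplacian $-\Delta_h$ with homogeneous Neumann (or periodic) boundary conditions: its eigenvectors form an $\ell^2$-orthonormal basis and the smallest eigenvalue restricted to the mean-zero subspace is bounded below by a positive constant $\lambda_1(\Omega)$ independent of $h$ (e.g.\ by explicit computation on the rectangular grid, where the eigenvalues are of the form $\tfrac{4}{h^2}\sin^2(\tfrac{k\pi h}{2L})$ and the smallest nonzero one stays $\ge \tfrac{\pi^2}{L^2}(1+o(1))$). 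Expanding $f$ in this basis and using Parseval then yields $\|f\|_2^2 \le \lambda_1^{-1}\|\nabla_h f\|_2^2$, with $C_0 = \lambda_1^{-1/2}$. This step is the main technical obstacle because the constant must be shown to be $h$-independent; any argument based on a discrete analogue of the continuous mean-value trick (summation by parts along rows/columns combined with the zero-mean condition) would also work and is how I would verify the claim rigorously if the spectral route is not desired.

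For the second and third inequalities, set $\varphi := (-\Delta_h)^{-1} f \in \mathring{\mathcal{C}}_\Omega$ so that $f = -\Delta_h\varphi$ and $\|f\|_{-1,h}^2 = \langle f,\varphi\rangle_C = \|\nabla_h \varphi\|_2^2$. The second bound follows by Cauchy--Schwarz and the first inequality applied to $\varphi$:
\begin{equation*}
\|f\|_{-1,h}^2 = \langle f,\varphi\rangle_C \le \|f\|_2\|\varphi\|_2 \le C_0 \|f\|_2 \|\nabla_h\varphi\|_2 = C_0 \|f\|_2\|f\|_{-1,h}.
\end{equation*}
For the third bound, a summation by parts gives $\|f\|_2^2 = \langle f,-\Delta_h\varphi\rangle_C = \langle \nabla_h f,\nabla_h\varphi\rangle_1 \le \|\nabla_h f\|_2 \|\nabla_h\varphi\|_2 = \|\nabla_h f\|_2\|f\|_{-1,h}$; combining with the standard discrete inverse inequality $\|\nabla_h f\|_2 \le C h^{-1}\|f\|_2$ (which holds componentwise on the staggered grid by direct telescoping of finite differences) and canceling $\|f\|_2$ yields $\|f\|_2 \le \breve{C}_0 h^{-1}\|f\|_{-1,h}$.

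For Part 2, I would exploit the MAC layout: the horizontal component $u^x$ lives on the east--west edges $(ih,(j+\tfrac12)h)$ and the no-penetration condition forces $u^x_{0,j+1/2} = u^x_{N,j+1/2} = 0$; symmetrically $u^y$ vanishes at $j=0,N$. Thus each scalar component vanishes on two opposite sides of $\Omega$, and a row-by-row (resp.\ column-by-column) 1D discrete Poincaré inequality with Dirichlet endpoints—proved elementarily by writing $u^x_{i,j+1/2} = h\sum_{k=1}^{i}(D_x u^x)_{k-1/2,j+1/2}$, squaring, and applying Cauchy--Schwarz—yields $\|u^x\|_A \le C_0\|\nabla_h u^x\|_2$ and similarly for $u^y$. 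Summing the two component estimates and using the definition of $\langle\cdot,\cdot\rangle_1$ gives $\|\bm{v}\|_2 \le C_0 \|\nabla_h \bm{v}\|_2$ with $C_0$ depending only on $L$.
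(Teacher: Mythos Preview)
Your argument is correct. Note, however, that the paper does not actually supply a proof of this proposition: it is stated as a preliminary Poincar\'e-type inequality and used throughout the analysis without justification, in the spirit of a standard discrete-analysis fact. So there is no ``paper's own proof'' to compare against; what you have written fills in the details the authors omit.

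Substantively your approach is the natural one. The spectral (or row-by-row telescoping) argument for the mean-zero Poincar\'e bound, the duality computation $\|f\|_{-1,h}^2=\|\nabla_h\varphi\|_2^2$ to obtain $\|f\|_{-1,h}\le C_0\|f\|_2$, and the combination of the summation-by-parts identity $\|f\|_2^2=\langle\nabla_h f,\nabla_h\varphi\rangle_1$ with the inverse inequality $\|\nabla_h f\|_2\le Ch^{-1}\|f\|_2$ to get $\|f\|_2\le \breve{C}_0 h^{-1}\|f\|_{-1,h}$ are all sound. For Part~2 your reduction to a one-dimensional Dirichlet Poincar\'e inequality via the MAC layout (no-penetration forces $u^x$ to vanish on the east--west boundary columns and $u^y$ on the north--south boundary rows) is exactly the right mechanism on the rectangular domain considered here. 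One small remark: the paper also allows periodic boundary conditions, in which case the no-penetration hypothesis in Part~2 is replaced by a mean-zero condition on each velocity component, and the argument reverts to the spectral/mean-zero Poincar\'e bound of Part~1; you may want to mention that variant for completeness.
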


\subsection{The second order accurate numerical scheme}
\begin{subequations}\label{scheme: main}
    \begin{align}
        & \frac{\hat{\bm{u}}^{m+1}-\bm{u}^{m}}{\tau}
        + \frac{1}{2} \Big(\tilde{\bm{u}}^{m+1/2} \cdot \nabla_{h} \hat{\bm{u}}^{m+1/2}
        + \nabla_{h} \cdot \Big( \hat{\bm{u}}^{m+1/2} (\tilde{\bm{u}}^{m+1/2} )^{T} \Big) \Big)
        + \nabla_{h} \psi^{m}
        - \Delta_{h} \hat{\bm{u}}^{m+1/2} \nonumber 
        \\
        & = - \mathcal{A}_{h} \tilde{p}^{m+1/2} \nabla_{h} \mu_{p}^{m+1/2} 
        - \mathcal{A}_{h} \tilde{n}^{m+1/2} \nabla_{h} \mu_{n}^{m+1/2}, 
        \label{scheme: u_hat}\\
        & \frac{n^{m+1}-n^{m}}{\tau} 
        + \nabla_{h} \cdot \Big (\mathcal{A}_{h} \tilde{n}^{m+1/2} \hat{\bm{u}}^{m+1/2} \Big) 
        = \nabla_{h} \cdot \Big( \breve{n}^{m+1/2} \nabla_{h} \mu_{n}^{m+1/2} \Big), 
        \label{scheme: n}  \\
        & \frac{p^{m+1}-p^{m}}{\tau} 
        + \nabla_{h} \cdot \Big( \mathcal{A}_{h} \tilde{p}^{m+1/2} \hat{\bm{u}}^{m+1/2} \Big)
        = \nabla_{h} \cdot \Big( \breve{p}^{m+1/2} \nabla_{h} \mu_{p}^{m+1/2} \Big), 
        \label{scheme: p} \\
        & \mu_{n}^{m+1/2} 
        = \frac{n^{m+1} \ln n^{m+1}-n^{m} \ln n^{m}}{n^{m+1}-n^{m}}-1
        + \tau \ln \frac{n^{m+1}}{n^{m}} 
        + (-\Delta_{h})^{-1} (n^{m+1/2}-p^{m+1/2} ), 
        \label{scheme: mu_n}\\
        & \mu_{p}^{m+1/2} 
        = \frac{p^{m+1} \ln p^{m+1}-p^{m} \ln p^{m}}{p^{m+1}-p^{m}}-1
        + \tau \ln \frac{p^{m+1}}{p^{m}}
        + (-\Delta_{h})^{-1} (p^{m+1/2}-n^{m+1/2} ) , 
        \label{scheme: mu_p} \\
        & \frac{\bm{u}^{m+1}-\hat{\bm{u}}^{m+1}}{\tau} 
        + \frac{1}{2} \nabla_{h} (\psi^{m+1}-\psi^{m} )=0, 
        \label{scheme: u}\\
        & \nabla_{h} \cdot \bm{u}^{m+1}=0, 
        \label{scheme: nabla_u}
    \end{align}
\end{subequations} 
where
\begin{equation}\label{eqn:interpolation}
    \begin{aligned}
    & \tilde{\bm{u}}^{m+1/2}:=\frac{3}{2} \bm{u}^{m}- \frac{1}{2} \bm{u}^{m-1}, \quad 
    \hat{\bm{u}}^{m+1/2}:= \frac{1}{2} \hat{\bm{u}}^{m+1}+ \frac{1}{2} \bm{u}^{m},
    \\
    &\tilde{p}^{m+1/2}:=\frac{3}{2} p^{m}- \frac{1}{2} p^{m-1}, \quad 
    \tilde{n}^{m+1/2}:=\frac{3}{2} n^{m}- \frac{1}{2} n^{m-1},
    \\
    & n^{m+1/2} := \frac{1}{2}\left(n^{m+1}+n^{m}\right), \quad 
    p^{m+1/2} := \frac{1}{2}\left(p^{m+1}+p^{m}\right), \quad  
\\ 
    &\breve{\varphi}^{m+1/2}_{i+1/2,j} := \left\{ \begin{array}{l} 
    A_{x}\tilde{\varphi}^{m+1/2}_{i+1/2,j} ,  \quad \mbox{if $A_{x}\tilde{\varphi}^{m+1/2}_{i+1/2,j} >0$} , \\
    ( (A_{x}\tilde{\varphi}^{m+1/2}_{i+1/2,j} )^{2}+\tau^8 )^{1/2},  \quad  
    \mbox{if $A_{x}\tilde{\varphi}^{m+1/2}_{i+1/2,j} \le 0$} , 
    \end{array} \right. \quad  (\varphi = n, p) ,   
     \\
    &\breve{\varphi}^{m+1/2}_{i,j+1/2} := \left\{ \begin{array}{l} 
    A_{y}\tilde{\varphi}^{m+1/2}_{i,j+1/2} ,  \quad \mbox{if $A_{y} \tilde{\varphi}^{m+1/2}_{i,j+1/2} >0$} , \\ 
    ( (A_{y}\tilde{\varphi}^{m+1/2}_{i,j+1/2} )^{2}+\tau^8 )^{1/2},  \quad 
     \mbox{if $A_{y}\tilde{\varphi}^{m+1/2}_{i,j+1/2} \le 0$} , 
    \end{array} \right. \quad  (\varphi = n, p) ,  
\end{aligned}
\end{equation}
with either periodic boundary condition, or the discrete physical boundary condition:
\begin{equation}
\begin{aligned}
    & \left. ( \hat{\bm{u}}^{m+1} \cdot \bm{n} ) \right|_{\Gamma} = 0, \quad 
   \left.  ( \nabla_h ( \hat{\bm{u}}^{m+1} \cdot \bm{\tau} )  \cdot \bm{n} ) \right|_{\Gamma} = 0 , \quad 
    \left. \bm{u}^{m+1} \cdot \bm{n} \right|_{\Gamma} = 0, \quad 
 \left. ( \nabla\psi^{m+1} \cdot \bm{n}  ) \right|_{\Gamma} = 0, 
\\
  & 
    \left. \partial_{\bm{n}} p^{m+1}\right|_{\Gamma}=\left.\partial_{\bm{n}} n^{m+1}\right|_{\Gamma}=0, 
    \quad 
    \left.\partial_{\bm{n}} \mu_{p}^{m+1/2}\right|_{\Gamma}=\left.\partial_{\bm{n}}\mu_{n}^{m+1/2}\right|_{\Gamma}=0,  \quad 
\left. ( \nabla_h \phi^{m+1} \cdot \bm{n} ) \right|_{\Gamma} = 0. 
\end{aligned}
\end{equation} 

\begin{lemma}
    At the initial time step, we could take a backward evaluation of the PDE system to obtain a locally second order accurate approximation to $n^{-1}$, $p^{-1}$ and $\u^{-1}$. In turn, a numerical implementation of the proposed algorithm \eqref{scheme: main} results in a second order local truncation error at $m=0$. 
\end{lemma}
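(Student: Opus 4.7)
The plan is to construct the ghost-time values $n^{-1}$, $p^{-1}$, $\u^{-1}$ from the prescribed initial data $n^{0}$, $p^{0}$, $\u^{0}$ by a one-step backward Taylor expansion, in which the time derivatives are supplied by the discrete form of the PDE system \eqref{eqn: main system} evaluated at $t=0$. Concretely, I would form the discrete right-hand sides
\begin{align*}
R_{n}^{0} &:= -\nabla_{h} \cdot ( \u^{0} n^{0} ) + \nabla_{h} \cdot ( n^{0} \nabla_{h} \mu_{n}^{0} ), \\
R_{p}^{0} &:= -\nabla_{h} \cdot ( \u^{0} p^{0} ) + \nabla_{h} \cdot ( p^{0} \nabla_{h} \mu_{p}^{0} ), \\
R_{\u}^{0} &:= -(\u^{0} \cdot \nabla_{h}) \u^{0} - \nabla_{h} \psi^{0} + \Delta_{h} \u^{0} - p^{0} \nabla_{h} \mu_{p}^{0} - n^{0} \nabla_{h} \mu_{n}^{0},
\end{align*}
and set $n^{-1} := n^{0} - \tau R_{n}^{0}$, $p^{-1} := p^{0} - \tau R_{p}^{0}$, $\u^{-1} := \u^{0} - \tau R_{\u}^{0}$, where $\psi^{0}$ is recovered from the standard initial pressure Poisson equation consistent with $\nabla_{h} \cdot \u^{0} = 0$. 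A Taylor expansion of the exact solution yields $n(\cdot, -\tau) = n(\cdot, 0) - \tau \partial_{t} n(\cdot, 0) + O(\tau^{2})$, while consistency of the MAC discrete operators gives $R_{n}^{0} = \partial_{t} n(\cdot, 0) + O(h^{2})$, and similarly for the $p$ and $\u$ components. Combining these estimates produces $n^{-1} - n(\cdot, -\tau) = O(\tau^{2}) + O(\tau h^{2})$, with analogous bounds for $p^{-1}$ and $\u^{-1}$.

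Next, I would verify that these ghost values preserve the second order consistency of the two-point extrapolations used in the scheme at $m=0$. A direct calculation gives $\tfrac{3}{2} n^{0} - \tfrac{1}{2} n^{-1} = n^{0} + \tfrac{\tau}{2} R_{n}^{0}$, which by the expansions above equals $n(\cdot, \tau/2) + O(\tau^{2}) + O(h^{2})$; the same estimate holds for $\tilde{p}^{1/2}$ and $\tilde{\u}^{1/2}$. Substituting the exact solution into \eqref{scheme: u_hat}--\eqref{scheme: nabla_u}, each building block of the temporal discretization produces only an $O(\tau^{2})$ residual: the centered increment $(\hat{\u}^{m+1} - \u^{m})/\tau$ and the modified Crank--Nicolson form in \eqref{scheme: mu_n}--\eqref{scheme: mu_p} are both second order accurate, the logarithmic correction term $\tau \ln(n^{m+1}/n^{m})$ is itself of size $O(\tau^{2})$ on a smooth solution, and the MAC spatial discretization contributes only $O(h^{2})$. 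Assembling these pieces yields the claimed $O(\tau^{2}) + O(h^{2})$ local truncation error at $m=0$.

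The main obstacle I expect is the treatment of the regularized mobility coefficient $\breve{n}^{1/2}$, $\breve{p}^{1/2}$ defined in \eqref{eqn:interpolation}. Because a smooth exact solution satisfies $\inf n, \inf p > 0$, the extrapolations $\tilde{n}^{1/2}$, $\tilde{p}^{1/2}$ remain strictly positive for sufficiently small $\tau$ by the $O(\tau^{2})$ approximation established above, so the regularized branch of $\breve{\,\cdot\,}^{1/2}$ is never activated along the exact trajectory; moreover, even when the alternate branch does apply, the $\tau^{8}$ perturbation is far beyond the target second order. The only subtle point is to keep track of the threshold on $\tau$ below which this positivity statement holds uniformly on the space--time grid, and to confirm that this threshold depends solely on the initial data and PDE regularity, not on the time step itself. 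Once this is in place, all remaining steps reduce to routine Taylor expansion bookkeeping.
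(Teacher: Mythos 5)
The paper states this lemma without supplying a proof, so there is no official argument to compare against; your consistency argument is the natural one and is sound. The backward Euler ghost-value construction $n^{-1} = n^0 - \tau R_n^0$ with discrete right-hand side $R_n^0$ indeed yields $n^{-1} - n(\cdot, -\tau) = O(\tau^2 + \tau h^2)$, which makes $\tilde{n}^{1/2} = n^0 + \frac{\tau}{2}R_n^0 = n(\cdot,\tau/2) + O(\tau^2 + \tau h^2)$, and the remaining temporal/spatial stencils (modified Crank--Nicolson, midpoint electric-potential term, MAC operators, pressure-correction split, and the $\tau \ln(n^{m+1}/n^m) = O(\tau^2)$ artificial regularization) each contribute $O(\tau^2 + h^2)$.

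One small slip: you claim that even if the regularized branch of $\breve{\varphi}^{1/2}$ in \eqref{eqn:interpolation} were to activate, its effect would be "far beyond second order" because of the $\tau^8$ term. This is not quite right: if $A_x\tilde{\varphi}^{1/2}$ is negative, then $\bigl((A_x\tilde{\varphi}^{1/2})^2 + \tau^8\bigr)^{1/2} \approx |A_x\tilde{\varphi}^{1/2}|$, a sign flip, which is an $O(1)$ perturbation relative to the raw extrapolated value, not $O(\tau^8)$. The correct justification is the one you give first: the exact trajectory is uniformly bounded below (say by $\delta > 0$), and the $O(\tau^2 + h^2)$ approximation error you established forces $A_x\tilde{\varphi}^{1/2} \ge \delta/2 > 0$ for $\tau, h$ small enough, so the regularized branch is never visited along the consistent profile. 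That argument alone suffices; the parenthetical fallback claim should simply be dropped.
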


It is clear that the mass conservation identity is valid for the ion concentration variables: 
\begin{equation}
    \overline{p^{m+1}}=\overline{p^{m}}=\cdots=\overline{p^{0}}, \quad 
    \overline{n^{m+1}}=\overline{n^{m}}=\cdots=\overline{n^{0}}.  \label{mass conserv-1} 
\end{equation} 
To simplify the notation in the later analysis, the following smooth function is introduced: 
\begin{equation}
    F_{a}(x):=\frac{G(x)-G(a)}{x-a}, \, \, \, G\left(x\right) = x \ln x , ~\forall x > 0,  \quad 
    \mbox{for any fixed $a >0$} , 
\end{equation} 
This notation leads to a rewritten form of \eqref{scheme: mu_n} and \eqref{scheme: mu_p}:
\begin{subequations}
    \begin{align}
        \mu_{n}^{m+1/2}
        = & F_{n^{m}} (n^{m+1} )-F_{n^{m}} (n^{m+1} )-1 
        + \tau (\ln n^{m+1}-\ln n^{m} )  
        \nonumber \\
        & 
        + (-\Delta_{h} )^{-1} (n^{m+1/2}-p^{m+1/2} ) , 
        \\
        \mu_{p}^{m+1/2} 
        = & F_{p^{m}} (p^{m+1} ) - F_{p^{m}} (p^{m+1} )-1 
        + \tau (\ln p^{m+1}-\ln p^{m} )
        \nonumber \\
        & + (-\Delta_{h} )^{-1} (p^{m+1/2}-n^{m+1/2} ) .  
    \end{align}
\end{subequations} 
Meanwhile, the following Calculus-style estimates will be frequently used in the later analysis. 
\begin{lemma}\label{lem: F} \cite{WangchengCH2021, WangchengCHNS2024, Liu2023PNP}
    Let $a>0$ be fixed, then
\begin{enumerate}
  \item $\displaystyle F_{a}^{\prime}(x)=\frac{G^{\prime}(x)(x-a)-(G(x)-G(a))}{(x-a)^{2}} \geq 0$, for any $x>0$.
  \item $F_{a}(x)$ is an increasing function of $x$, and $F_{a}(x) \leq F_{a}(a)=\ln a+1$ for any $0<x<a$.
\end{enumerate}
\end{lemma}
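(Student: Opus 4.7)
The plan is to verify Part (1) by direct differentiation and a sign analysis of the numerator, then deduce Part (2) as an immediate corollary of monotonicity together with the value of $F_a$ at $x=a$.

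First, I would apply the quotient rule to $F_a(x)=(G(x)-G(a))/(x-a)$ to obtain the stated formula
$$
F_a'(x)=\frac{G'(x)(x-a)-(G(x)-G(a))}{(x-a)^2},
$$
valid for $x\neq a$. The denominator is nonnegative, so the task reduces to proving that the auxiliary function $H(x):=G'(x)(x-a)-(G(x)-G(a))$ is nonnegative on $(0,\infty)$. Here I would use $G(x)=x\ln x$, so $G'(x)=\ln x+1$ and $G''(x)=1/x>0$. Note $H(a)=0$, and differentiation gives the cancellation $H'(x)=G''(x)(x-a)=(x-a)/x$. Thus $H'<0$ on $(0,a)$ and $H'>0$ on $(a,\infty)$, which means $H$ attains its unique minimum $H(a)=0$. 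Consequently $H(x)\ge0$ for all $x>0$, proving Part (1).

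For Part (2), the monotonicity of $F_a$ follows directly from $F_a'\ge 0$ on $(0,a)\cup(a,\infty)$. To fill the single point $x=a$, I would use a Taylor expansion of $G$ at $a$:
$$
G(x)=G(a)+G'(a)(x-a)+\tfrac12 G''(a)(x-a)^2+O((x-a)^3),
$$
which yields $F_a(x)=G'(a)+\tfrac12 G''(a)(x-a)+O((x-a)^2)$, so $F_a$ extends smoothly across $a$ with $F_a(a)=G'(a)=\ln a+1$ and $F_a'(a)=\tfrac12 G''(a)>0$. Hence $F_a$ is (strictly) increasing on all of $(0,\infty)$, and for any $0<x<a$ we conclude $F_a(x)\le F_a(a)=\ln a+1$, as claimed.

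There is no real obstacle here; the only subtlety is the apparent singularity of the quotient at $x=a$, which is removable and handled by the Taylor argument above. Everything else is elementary Calculus, and the key algebraic observation that makes the sign analysis clean is the cancellation $H'(x)=G''(x)(x-a)$, which leverages the strict convexity $G''>0$ of the entropy $G(x)=x\ln x$.
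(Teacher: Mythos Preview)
Your proof is correct. The paper does not actually prove this lemma; it merely cites it from \cite{WangchengCH2021, WangchengCHNS2024, Liu2023PNP}, so there is no in-paper argument to compare against, and your elementary sign analysis via $H'(x)=G''(x)(x-a)$ together with the removable-singularity Taylor computation at $x=a$ supplies exactly the kind of self-contained justification those references contain.
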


\section{The unique solvability and positivity-preserving property} \label{sec: positivity}

Since the implicit part of the numerical scheme~\eqref{scheme: main} corresponds to a monotone, singular, while non-symmetric nonlinear system, a four-step process is needed to establish its unique solvability and positivity preserving analysis.

{\bf Step 1:} 
A connection between $\hat{\bm{u}}^{m+1}$ and $(\mu_{n}^{m+1/2}, \mu_{p}^{m+1/2})$ is needed. The following equivalent form of \eqref{scheme: u_hat} is observed: 
\begin{subequations}
    \begin{align}
        & \frac{2\hat{\bm{u}}^{m+1/2} - 2\bm{u}^{m}}{\tau}
        + \frac{1}{2} \Big(\tilde{\bm{u}}^{m+1/2} \cdot \nabla_{h} \hat{\bm{u}}^{m+1/2}
        + \nabla_{h} \cdot \Big(\hat{\bm{u}}^{m+1/2} (\tilde{\bm{u}}^{m+1/2} )^{T} \Big) \Big)
        + \nabla_{h} \psi^{m}
        - \Delta_{h} \hat{\bm{u}}^{m+1/2} 
        \nonumber\\
        &  = - \mathcal{A}_{h} \tilde{p}^{m+1/2} \nabla_{h} \mu_{p}^{m+1/2} 
        - \mathcal{A}_{h} \tilde{n}^{m+1/2} \nabla_{h} \mu_{n}^{m+1/2}, 
        \label{scheme: u step1.1}\\
        & \hat{\bm{u}}^{m+1} = 2 \hat{\bm{u}}^{m+1/2}-\bm{u}^{m}. 
        \label{scheme: u step1.2}
\end{align}
\end{subequations} 
Of course, for any given field $(\mu_{n},\mu_{p} )$, a velocity vector $\bm{v}=\mathcal{L}_{h}^{N S} (\mu_{n},\mu_{p} )$ could be defined as the unique solution of the following discrete convection-diffusion equation:
\begin{equation}\label{def: L_h^NS}
    \begin{aligned}
        & \frac{2\bm{v} - 2\bm{u}^{m}}{\tau}
        + \frac{1}{2} \Big( \tilde{\bm{u}}^{m+1/2} \cdot \nabla_{h} \bm{v}
        + \nabla_{h} \cdot (\bm{v} (\tilde{\bm{u}}^{m+1/2} )^{T} ) \Big)
         + \nabla_{h} \psi^{m}  - \Delta_{h} \bm{v} 
        \\ 
        & = - \mathcal{A}_{h} \tilde{p}^{m+1/2} \nabla_{h} \mu_{p}^{m+1/2} 
        - \mathcal{A}_{h} \tilde{n}^{m+1/2} \nabla_{h} \mu_{n}^{m+1/2}. 
    \end{aligned}
\end{equation}
Subsequently, the intermediate velocity vector is obtained as $\hat{\bm{u}}^{m+1/2}=\mathcal{L}_{h}^{N S} ( \mu_{n}^{m+1/2},\mu_{p}^{m+1/2} )$, combined with the formula \eqref{scheme: u step1.2} for $\hat{\bm{u}}^{m+1}$. In addition, $\bm{u}^{m+1}$ becomes the discrete Helmholtz projection of $\hat{\bm{u}}^{m+1}$ into divergence-free space, as implied by \eqref{scheme: u}, \eqref{scheme: nabla_u}. 

{\bf Step 2:} 
A connection between $(n^{m+1},p^{m+1} )$ and $(\mu_{n}^{m+1/2},\mu_{p}^{m+1/2} )$ is needed in the further analysis. A substitution of $\hat{\bm{u}}^{m+1/2}= \mathcal{L}_{h}^{N S} (\mu_{n}^{m+1/2},\mu_{p}^{m+1/2} )$ into \eqref{scheme: n} and \eqref{scheme: p} gives 
\begin{subequations}\label{scheme: step2}
\begin{align}
    &\frac{n^{m+1}-n^{m}}{\tau}
    +\nabla_{h} \cdot \Big(\mathcal{A}_{h} \tilde{n}^{m+1/2} \mathcal{L}_{h}^{N S}
    (\mu_{n}^{m+1/2},\mu_{p}^{m+1/2} ) \Big)
    =\nabla_{h} \cdot \Big(\breve{n}^{m+1/2} \nabla_{h} \mu_{n}^{m+1/2} \Big) , 
    \label{num: n} \\
    &\frac{p^{m+1}-p^{m}}{\tau}
    +\nabla_{h} \cdot \Big( \mathcal{A}_{h} \tilde{p}^{m+1/2} \mathcal{L}_{h}^{N S}
    (\mu_{n}^{m+1/2},\mu_{p}^{m+1/2} ) \Big)
    =\nabla_{h} \cdot \Big(\breve{p}^{m+1/2} \nabla_{h} \mu_{p}^{m+1/2} \Big) .  
    \label{num: p}
\end{align}
\end{subequations} 
In turn, we define $\bm{\mu} = (\mu_{n},\mu_{p} )$, $\bm{c}= (n,p )$ and 
$\mathcal{L}_{h}^{NP}:2 (\mathbb{R}^{N^{2}} )^{2} \rightarrow 2 (\mathbb{R}^{N^{2}} )^{2}$ as  
\begin{subequations}
\begin{align}
&\mathcal{L}_{h}^{P} (\mu_{p} ) 
    = \nabla_{h} \cdot \Big(\mathcal{A}_{h} \tilde{p}^{m+1/2} \mathcal{L}_{h}^{N S} (\bm{\mu} ) \Big)
    - \nabla_{h} \cdot \Big( \breve{p}^{m+1/2} \nabla_{h} \mu_{p} \Big), \label{def: L_h^P}
\\
&\mathcal{L}_{h}^{N} (\mu_{n} )
    = \nabla_{h} \cdot \Big( \mathcal{A}_{h} \tilde{n}^{m+1/2} \mathcal{L}_{h}^{N S} (\bm{\mu} ) \Big)
    - \nabla_{h} \cdot \Big( \breve{n}^{m+1/2} \nabla_{h} \mu_{n} \Big). \label{def: L_h^N}
\end{align}
\end{subequations} 
To simplify the notation, the above system could be rewritten as 
\begin{equation}\label{def: L_h^NP}
    \mathcal{L}_{h}^{NP} (\bm{\mu} )
    = \nabla_{h} \cdot \Big(\mathcal{A}_{h} \tilde{\bm{c}}^{m+1/2} \mathcal{L}_{h}^{N S} (\bm{\mu} ) \Big)
    - \nabla_{h} \cdot \Big( \breve{\bm{c}}^{m+1/2} \nabla_{h} \bm{\mu} \Big).
\end{equation} 
Of course, $\mathcal{L}_{h}^{NP}$ is a linear operator, with either periodic or homogeneous Neumann boundary condition. Therefore, an equivalent representation of \eqref{scheme: step2} is available: 
\begin{equation}\label{scheme: c step2}
    \frac{\bm{c}^{m+1}-\bm{c}^{m}}{\tau}=-\mathcal{L}_{h}^{NP} (\bm{\mu}^{m+1/2} ). 
\end{equation} 

{\bf Step 3:} 
To facilitate the theoretical analysis, we have to prove that the operator $\mathcal{L}_{h}^{NP}$ is invertible, so that $(\mathcal{L}_{h}^{NP})^{-1}$ is well defined. Following similar ideas in \cite{WangchengCHNS2022}, we are able to derive the next two properties of $\mathcal{L}_{h}^{NP}$.

\begin{lemma}\label{lem: L_h^NP}
The linear operator $\mathcal{L}_{h}^{NP}$ preserves the monotonicity estimate: 
\begin{equation}\label{lem eqn: L_h^NS}
    \langle\mathcal{L}_{h}^{NP} (\bm{\mu}_{1} )-\mathcal{L}_{h}^{NP} (\bm{\mu}_{2} ), 
    \bm{\mu}_{1}-\bm{\mu}_{2} \rangle_{C} 
    \geq \left\|\sqrt{\breve{\bm{c}}^{m+1/2}} \nabla_{h} \left(\bm{\mu}_{1}-\bm{\mu}_{2}\right)\right\|_{2}^{2}  
    \geq 0 ,  
\end{equation} 
for any $\bm{\mu}_{1}, \bm{\mu}_{2}$. In addition, the equality is realized if and only if $\bm{\mu}_{1}=\bm{\mu}_{2}$, if we require $\overline{\bm{\mu}}_{1}=\overline{\bm{\mu}}_{2}=0$. As a result, the operator $\mathcal{L}_{h}^{NP}$ is invertible. 
\end{lemma}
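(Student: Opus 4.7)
The plan is to exploit the fact that although both $\mathcal{L}_h^{NS}$ and $\mathcal{L}_h^{NP}$ are affine (they carry inhomogeneous contributions from $\bm{u}^m/\tau$ and $\nabla_h \psi^m$), these constants drop out when two instances are subtracted. Accordingly, I set $\bm{\eta} := \bm{\mu}_1 - \bm{\mu}_2$ and $\bm{w} := \mathcal{L}_h^{NS}(\bm{\mu}_1) - \mathcal{L}_h^{NS}(\bm{\mu}_2)$; by \eqref{def: L_h^NS}, $\bm{w}$ solves the homogeneous problem
\[
\frac{2\bm{w}}{\tau} + \frac{1}{2}\bigl(\tilde{\bm{u}}^{m+1/2}\cdot\nabla_h \bm{w} + \nabla_h \cdot (\bm{w}(\tilde{\bm{u}}^{m+1/2})^T)\bigr) - \Delta_h \bm{w} = -\mathcal{A}_h \tilde{p}^{m+1/2}\nabla_h \eta_p - \mathcal{A}_h \tilde{n}^{m+1/2}\nabla_h \eta_n.
\]
This linear coupling between $\bm{w}$ and $\bm{\eta}$ is the key structural ingredient.

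Expanding $\langle \mathcal{L}_h^{NP}(\bm{\mu}_1) - \mathcal{L}_h^{NP}(\bm{\mu}_2), \bm{\eta}\rangle_C$ via definition \eqref{def: L_h^NP} and applying the summation-by-parts identity \eqref{lem: discrete inner of g and afu} to each of its two terms produces
\[
\langle \mathcal{L}_h^{NP}(\bm{\mu}_1) - \mathcal{L}_h^{NP}(\bm{\mu}_2), \bm{\eta}\rangle_C = -\langle \bm{w}, \mathcal{A}_h \tilde{\bm{c}}^{m+1/2}\nabla_h \bm{\eta}\rangle_1 + \bigl\|\sqrt{\breve{\bm{c}}^{m+1/2}}\nabla_h\bm{\eta}\bigr\|_2^2 ,
\]
where all boundary contributions vanish because both the no-penetration condition on $\bm{w}$ and the Neumann condition on $\bm{\eta}$ (inherited from the scheme) are compatible with the summation-by-parts identity.

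The main step, and the most delicate part of the argument, is to reidentify the indefinite cross term on the right. I would take the $\langle\cdot,\cdot\rangle_1$ inner product of the equation for $\bm{w}$ with $\bm{w}$ itself: the convective contribution vanishes by the skew-symmetry identity \eqref{lem: discrete inner}, the diffusive term produces $\|\nabla_h \bm{w}\|_2^2$, and the coupling source reads off as
\[
-\langle \bm{w}, \mathcal{A}_h \tilde{p}^{m+1/2}\nabla_h \eta_p + \mathcal{A}_h \tilde{n}^{m+1/2}\nabla_h \eta_n\rangle_1 = \frac{2}{\tau}\|\bm{w}\|_2^2 + \|\nabla_h \bm{w}\|_2^2 \geq 0.
\]
Substituting this back yields $\langle \mathcal{L}_h^{NP}(\bm{\mu}_1) - \mathcal{L}_h^{NP}(\bm{\mu}_2), \bm{\eta}\rangle_C = \frac{2}{\tau}\|\bm{w}\|_2^2 + \|\nabla_h \bm{w}\|_2^2 + \|\sqrt{\breve{\bm{c}}^{m+1/2}}\nabla_h \bm{\eta}\|_2^2$, which is the desired monotonicity bound \eqref{lem eqn: L_h^NS}.

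For the equality case and invertibility, the strict positivity $\breve{\bm{c}}^{m+1/2} \geq \tau^4 > 0$ built into \eqref{eqn:interpolation} forces $\nabla_h \bm{\eta} = 0$ whenever the lower bound is saturated, and the mean-zero constraint promotes this to $\bm{\eta} = 0$. Injectivity of the underlying linear operator on the finite-dimensional mean-zero subspace $\mathring{\mathcal C}_\Omega \times \mathring{\mathcal C}_\Omega$ then yields invertibility. I expect the principal hurdle to be the careful coupling bookkeeping between the Navier-Stokes and Nernst-Planck pieces—specifically, verifying that the cross term $\langle \bm{w}, \mathcal{A}_h \tilde{\bm{c}}^{m+1/2}\nabla_h \bm{\eta}\rangle_1$ collapses cleanly to a sum of squares via the two-part summation-by-parts argument, with no residual boundary terms from either the periodic or the homogeneous physical boundary conditions. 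Once that cancellation is secured, the remainder is routine.
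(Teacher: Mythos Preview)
Your proposal is correct and follows essentially the same route as the paper: define the difference $\bm{\eta}=\bm{\mu}_1-\bm{\mu}_2$, pass to the homogeneous velocity equation for $\bm{w}=\mathcal{L}_h^{NS}(\bm{\mu}_1)-\mathcal{L}_h^{NS}(\bm{\mu}_2)$, test it against $\bm{w}$ using the skew-symmetry identity \eqref{lem: discrete inner}, and combine with the summation-by-parts expansion of $\langle \mathcal{L}_h^{NP}(\bm{\mu}_1)-\mathcal{L}_h^{NP}(\bm{\mu}_2),\bm{\eta}\rangle_C$ to obtain the sum-of-squares identity. One small imprecision: the uniform lower bound $\breve{\bm{c}}^{m+1/2}\ge \tau^4$ does not hold at edges where the extrapolated mobility is already positive (there $\breve{\bm{c}}^{m+1/2}$ equals that positive value, which need not exceed $\tau^4$); however, strict pointwise positivity on the finite grid is all that is required for the equality-case argument, so your conclusion is unaffected.
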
 
\begin{proof}
    Given $\bm{\mu}_1$, $\bm{\mu}_2$, a difference function is defined as $\bm{\mu}_{D}:=\bm{\mu}_{1}-\bm{\mu}_{2}$. Since $\mathcal{L}_{h}^{NP}$ is a linear operator, the following expansion becomes available:
    \begin{equation}\label{eqn: G_mu}
        \mathcal{L}_{h}^{NP} (\bm{\mu}_{1} )
        - \mathcal{L}_{h}^{NP} ( \bm{\mu}_{2} )
        = \mathcal{L}_{h}^{NP} ( \bm{\mu}_{D} )
        = \nabla_{h} \cdot \Big(\mathcal{A}_{h} \tilde{\bm{c}}^{m+1/2} \mathcal{L}_{h}^{N S} (\bm{\mu}_{D} ) \Big)
        - \nabla_{h} \cdot \Big(\breve{\bm{c}}^{m+1/2} \nabla_{h} \bm{\mu}_{D} \Big).
    \end{equation}
    Taking a discrete inner product with \eqref{eqn: G_mu} by $\bm{\mu}_{D}$ leads to 
    \begin{equation}
    \begin{aligned} 
        \left\langle\mathcal{L}_{h}^{NP}\left(\bm{\mu}_{D}\right), \bm{\mu}_{D}\right\rangle_{C}
        = & 
        - \left\langle\mathcal{A}_{h} \tilde{\bm{c}}^{m+1/2} \mathcal{L}_{h}^{N S}\left(\bm{\mu}_{D}\right), 
        \nabla_{h}\bm{\mu}_{D}\right\rangle_{1} 
        + \left\langle\breve{\bm{c}}^{m+1/2} \nabla_{h} \bm{\mu}_{D}, \nabla_{h} \bm{\mu}_{D}\right\rangle_{1} 
    \\
      = &      
           - \left\langle\mathcal{A}_{h} \tilde{\bm{c}}^{m+1/2} \nabla_{h}\bm{\mu}_{D}, 
            \mathcal{L}_{h}^{N S}\left(\bm{\mu}_{D}\right)\right\rangle_{1}
            + \left\langle\breve{\bm{c}}^{m+1/2} \nabla_{h} \bm{\mu}_{D}, \nabla_{h} \bm{\mu}_{D}\right\rangle_{1}. 
        \end{aligned}
        \label{eqn: inner mu}  
    \end{equation}
In addition, we define $\bm{v}_j :=\mathcal{L}_{h}^{N S} (\bm{\mu}_j )$, $j=1, 2$, and $\bm{v}_{D}:=\bm{v}_{1}-\bm{v}_{2}=\mathcal{L}_{h}^{NS} \left(\bm{u}_{D}\right)$, based on the linearity of $\mathcal{L}_{h}^{N S}$. Meanwhile, the definition of $\mathcal{L}_{h}^{N S}$ in \eqref{def: L_h^NS} indicates that
    \begin{equation}\label{eqn: difference v}
        \begin{aligned}
        & \frac{2\bm{v}_{D}}{\tau}
        + \frac{1}{2} \Big(\tilde{\bm{u}}^{m+1/2} \cdot \nabla_{h} \bm{v}_{D} 
        + \nabla_{h} \cdot \Big( \bm{v}_{D} (\tilde{\bm{u}}^{m+1/2} )^{T} \Big) \Big) 
        - \Delta_{h} \bm{v}_{D} \\
        & + \mathcal{A}_{h} \tilde{p}^{m+1/2} \nabla_{h} \mu_{p,D}^{m+1/2} 
        + \mathcal{A}_{h} \tilde{n}^{m+1/2} \nabla_{h} \mu_{n,D}^{m+1/2} = 0. 
    \end{aligned} 
    \end{equation}
Of course, the non-homogeneous source terms, namely $\bm{u}^{m}/\tau$ and $\nabla_{h} \psi^{m}$, disappear in this difference equation. Therefore, taking a discrete inner product with \eqref{eqn: difference v} by $\bm{v}_{D}=\mathcal{L}_{h}^{NS} \left(\bm{\mu}_{D}\right)$ yields 
    \begin{equation}\label{eqn: norm v}
        \frac{2}{\tau}\|\bm{v}_{D}\|_{2}^{2}
        +\left\|\nabla_{h} \bm{v}_{D}\right\|_{2}^{2}
        +\left\langle\mathcal{A}_{h} \tilde{\bm{c}}^{m+1/2} \nabla_{h}\bm{\mu}_{D}, 
            \mathcal{L}_{h}^{N S}\left(\bm{\mu}_{D}\right)\right\rangle_{1}=0,
    \end{equation}
in which the following identities have been used: 
    \begin{subequations}
        \begin{align}
            & \langle\tilde{\bm{u}}^{m+1/2} \cdot \nabla_{h} \bm{v}_{D} 
            + \nabla_{h} \cdot (\bm{v}_{D} (\tilde{\bm{u}}^{m+1/2} )^{T} ) , 
            \bm{v}_{D} \rangle_{1}=0, \\
            & -\left(\bm{v}_{D}, \Delta_{h} \bm{v}_{D}\right)
            =\left\|\nabla_{h} \bm{v}_{D}\right\|_{2}^{2} .
        \end{align}
    \end{subequations}
   A combination of \eqref{eqn: norm v} and \eqref{eqn: inner mu} results in 
    \begin{equation}
        \left\langle\mathcal{L}_{h}^{NP}\left(\bm{\mu}_{D}\right), \bm{\mu}_{D}\right\rangle_{C}
        = \frac{2}{\tau}\|\tilde{\bm{v}}\|_{2}^{2}
        + \left\|\nabla_{h} \tilde{\bm{v}}\right\|_{2}^{2} 
        + \left\|\sqrt{\breve{\bm{c}}^{m+1/2}} \nabla_{h} \bm{\mu}_{D}\right\|_{2}^{2},
    \end{equation} 
    or equivalently, 
    \begin{equation}
            \left\langle \mathcal{L}_{h}^{NP} \left(\bm{\mu}_{1}\right)
            - \mathcal{L}_{h}^{NP}\left(\bm{\mu}_{2}\right), 
            \bm{\mu}_{1}-\bm{\mu}_{2}\right\rangle_{C} 
            = \left\langle\mathcal{L}_{h}^{NP}\left(\bm{\mu}_{D}\right), \bm{\mu}_{D}\right\rangle_{C} 
            \geq \left\|\sqrt{\breve{\bm{c}}^{m+1/2}} \nabla_{h} \bm{\mu}_{D}\right\|_{2}^{2} 
            \geq 0,
    \end{equation}
so that \eqref{lem eqn: L_h^NS} has been proved. Meanwhile, it is clear that the equality is valid if and only if $\bm{\mu}_{D} \equiv 0$, i.e., $\bm{\mu}_{1}=\bm{\mu}_{2}$, under the requirement that $\overline{\bm{\mu}}_{1}=\overline{\bm{\mu}}_{2}=0$. The proof of Lemma~\ref{lem: L_h^NP} is completed.
\end{proof}

It is clear that the inverse operator $(\mathcal{L}_{h}^{NP} )^{-1}$ also maps $2\mathbb{R}^{N^{2}}$ into $2\mathbb{R}^{N^{2}}$, since the linear operator $\mathcal{L}_{h}^{NP}$ does. As a direct consequence of Lemma \ref{lem: L_h^NP}, the following monotonicity analysis is available.

\begin{proposition}\label{pro: G-1}
    The linear operator $\left(\mathcal{L}_{h}^{NP}\right)^{-1}$ also preserves the monotonicity estimate: 
    \begin{equation}\label{eqn: monotonicity G-1}
        \begin{aligned}
           \langle (\mathcal{L}_{h}^{NP} )^{-1} (\bm{c}_{1} )
            - (\mathcal{L}_{h}^{NP} )^{-1} (\bm{c}_{2} ), 
            \bm{c}_{1}-\bm{c}_{2} \rangle_{C} 
            & \geq \sum_{q=1}^{K}\left\|\sqrt{\mathcal{A}_{h}c_{q}^{m}}
            \nabla_{h} (\mu_{q}^{(1)} -\mu_{q}^{(2)} )\right\|_{2}^{2}  \\
            & \geq C_{1}^{2} \| (\mathcal{L}_{h}^{NP} )^{-1} (\bm{c}_{1}
            -\bm{c}_{2} ) \|_{2}^{2},
        \end{aligned}
    \end{equation}
    for any $\bm{c}_{1}, \bm{c}_{2}$, with $\overline{\bm{c}}_{1}=\overline{\bm{c}}_{2}=0$. In fact, the constant $C_{1}$ is associated with the minimum of $\bm{c}^{m}$, and the discrete Poincar\'e regularity, $\|\nabla_h f \|_w \ge C_0^{-1} \|f\|_2$, for any $f$ with $\overline{f}=0$, as indicated by Proposition~\ref{prop:1}. In addition, the equality is valid if and only if $\bm{c}_{1}=\bm{c}_{2}$.
\end{proposition}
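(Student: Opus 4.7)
My plan is to reduce the claim directly to Lemma~\ref{lem: L_h^NP} by a change of variables. Given mean-zero data $\bm c_1, \bm c_2$, I will set $\bm{\mu}_i := (\mathcal{L}_{h}^{NP})^{-1}(\bm c_i)$ for $i=1,2$, normalized so that $\overline{\bm{\mu}}_1 = \overline{\bm{\mu}}_2 = 0$, which is legitimate because Lemma~\ref{lem: L_h^NP} established invertibility of $\mathcal{L}_{h}^{NP}$ on the mean-zero subspace. The identity $\mathcal{L}_{h}^{NP}(\bm{\mu}_i) = \bm c_i$ then yields
\begin{equation*}
\langle (\mathcal{L}_{h}^{NP})^{-1}(\bm c_1) - (\mathcal{L}_{h}^{NP})^{-1}(\bm c_2),\, \bm c_1 - \bm c_2 \rangle_C
= \langle \bm{\mu}_1 - \bm{\mu}_2,\, \mathcal{L}_{h}^{NP}(\bm{\mu}_1) - \mathcal{L}_{h}^{NP}(\bm{\mu}_2) \rangle_C,
\end{equation*}
so that the first inequality in \eqref{eqn: monotonicity G-1} is an immediate consequence of \eqref{lem eqn: L_h^NS}, interpreting $\mathcal{A}_h c_q^m$ componentwise as the mobility coefficient $\breve{\bm c}^{m+1/2}$ appearing in Lemma~\ref{lem: L_h^NP}.

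For the second inequality, I intend to use a pointwise lower bound on the mobility together with Poincaré's inequality. By the definition of $\breve{\bm c}^{m+1/2}$ in \eqref{eqn:interpolation}, the quantity is either the positive average $\mathcal{A}_x \tilde{\bm c}^{m+1/2}$ (bounded below by the positive minimum of $\bm c^m, \bm c^{m-1}$ once positivity has been inductively established) or else the regularized value bounded below by $\tau^4$; in both cases there is a uniform positive lower bound $c_{\min} > 0$ on every edge. Hence
\begin{equation*}
\bigl\|\sqrt{\breve{\bm c}^{m+1/2}}\, \nabla_h (\bm{\mu}_1 - \bm{\mu}_2)\bigr\|_2^2
\ge c_{\min} \, \|\nabla_h (\bm{\mu}_1 - \bm{\mu}_2)\|_2^2.
\end{equation*}
Since $\overline{\bm{\mu}_1 - \bm{\mu}_2} = 0$, Proposition~\ref{prop:1} gives $\|\nabla_h (\bm{\mu}_1 - \bm{\mu}_2)\|_2^2 \ge C_0^{-2} \|\bm{\mu}_1 - \bm{\mu}_2\|_2^2$, and noting $\bm{\mu}_1 - \bm{\mu}_2 = (\mathcal{L}_{h}^{NP})^{-1}(\bm c_1 - \bm c_2)$ yields the stated estimate with $C_1^2 = c_{\min}/C_0^2$. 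The equality case transfers from Lemma~\ref{lem: L_h^NP}: both ends vanish iff $\bm{\mu}_1 = \bm{\mu}_2$, equivalently $\bm c_1 = \bm c_2$ by bijectivity of $\mathcal{L}_{h}^{NP}$ on mean-zero states.

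The only subtle point in this plan is the uniform positive lower bound $c_{\min}$ on $\breve{\bm c}^{m+1/2}$, because in principle the extrapolated average $\frac{3}{2}\bm c^m - \frac{1}{2}\bm c^{m-1}$ could degenerate in sign. This is exactly what the $\tau^8$ artificial-regularization floor built into \eqref{eqn:interpolation} prevents: it guarantees a scheme-dependent but strictly positive lower bound on the mobility regardless of the sign of the extrapolated quantity, so the Poincaré-type step goes through unconditionally. The constant $C_1$ therefore depends on $\Omega$ (through $C_0$) and on the inductively established positive lower bound for $\bm c^m$, together with the $\tau^8$ floor on edges where the extrapolated concentration fails to be positive.
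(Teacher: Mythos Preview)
Your proof is correct and follows essentially the same route as the paper: set $\bm{\mu}_i = (\mathcal{L}_h^{NP})^{-1}(\bm{c}_i)$, rewrite the inner product via $\bm{c}_i = \mathcal{L}_h^{NP}(\bm{\mu}_i)$, apply Lemma~\ref{lem: L_h^NP} for the weighted-gradient lower bound, and then use the pointwise mobility lower bound together with the discrete Poincar\'e inequality of Proposition~\ref{prop:1} to obtain $C_1 = (\min \breve{\bm{c}}^{m+1/2})^{1/2} C_0^{-1}$. Your additional remarks on the $\tau^8$ regularization floor are a helpful clarification of why $\min \breve{\bm{c}}^{m+1/2} > 0$ holds unconditionally, something the paper simply takes for granted at this point.
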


\begin{proof}
    We denote $\bm{\mu}=\left(\mathcal{L}_{h}^{NP}\right)^{-1}\left(\bm{c}\right)$, an equivalent statement of $\bm{c} = \mathcal{L}_{h}^{NP}\left(\bm{\mu}\right)$. Therefore, an application of \eqref{lem eqn: L_h^NS} implies that
\begin{equation}
    \begin{aligned}
        \left\langle (\mathcal{L}_{h}^{NP} )^{-1} (\bm{c}_{1} )
        - (\mathcal{L}_{h}^{NP} )^{-1} (\bm{c}_{2} ), 
        \bm{c}_{1}-\bm{c}_{2} \right\rangle_{C} 
        & =\left\langle\mathcal{L}_{h}^{NP} (\bm{\mu}_{1} )
        -\mathcal{L}_{h}^{NP} (\bm{\mu}_{2} ), 
        \bm{\mu}_{1}-\bm{\mu}_{2}\right\rangle_{C} 
        \\
        &\geq \|\sqrt{\breve{\bm{c}}^{m+1/2}} \nabla_{h} \bm{\mu}_{D} \|_{2}^{2} 
        \geq C_{1}^{2} \|\bm{\mu}_{1}-\bm{\mu}_{2} \|_{2}^{2} 
        \\
        & =C_{1}^{2} \| (\mathcal{L}_{h}^{NP} )^{-1} (\bm{c}_{1}
        -\bm{c}_{2} ) \|_{2}^{2} \geq 0 , 
    \end{aligned}
\end{equation}
with $C_1 = ( \min \breve{\bm{c}}^{m+1/2} )^\frac12 C_0^{-1}$. Of course, the equality is valid if and only if $\bm{c}_{1}=\bm{c}_{2}$. The proof of Proposition \ref{pro: G-1} is finished.
\end{proof} 

Based on the construction \eqref{def: L_h^NS} and the definition \eqref{def: L_h^NP} for $\mathcal{L}_{h}^{NP}$, the following homogenization formula will be helpful in the later analysis:
\begin{equation}\label{eqn: decomposion L_h^NP}
\mathcal{L}_{h}^{NP} (\bm{\mu} ) 
= \mathcal{L}_{h,1}^{NP} (\bm{\mu} )  + \mathcal{L}_{h,2}^{NP},  \quad 
\mathcal{L}_{h,2}^{NP}
= \nabla_{h} \cdot\Big(\mathcal{A}_{h} \tilde{\bm{c}}^{m+1/2} (\bm{u}^{m}-\frac{\tau}{2} \nabla_{h} \psi^m ) \Big),  \text { for any } \overline{\bm{\mu}}=0,
\end{equation}
in which $\mathcal{L}_{h,1}^{NP}$ corresponds to a homogeneous linear operator. In more details, such a homogeneous operator satisfies the linearity property, in comparison with the operator $\mathcal{L}_{h}^{NS}$ given by \eqref{def: L_h^NS}. Moreover, the following $\| \cdot \|_2$ bound could always be assumed for the non-homogeneous source term, dependent on the numerical solution at the previous time steps: 
\begin{equation}
    \left\|\mathcal{L}_{h,2}^{NP}\right\|_{2} \leq A^{*} .
\end{equation}

\begin{proposition}\label{estimate: G-1}
    For any $\bm{c}$ with $\overline{\bm{c}}=0$, the following $\|\cdot\|_{2}$ 
    bound is valid: 
    \begin{align}
         \| (\mathcal{L}_{h}^{NP} )^{-1} (\bm{c} ) \|_{2} 
        & \leq C_{1}^{-2} ( \|\bm{c} \|_{2}+A^* ) . 
        \label{estimate:L_h^NP L2} 
    \end{align}
\end{proposition}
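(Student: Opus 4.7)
The plan is to view the bound as a Lax--Milgram--style coercivity estimate: the decomposition \eqref{eqn: decomposion L_h^NP} extracts the mean-zero linear part $\mathcal{L}_{h,1}^{NP}$, which inherits the monotonicity constant $C_1^2$ from Lemma~\ref{lem: L_h^NP} and Proposition~\ref{pro: G-1}, and the inhomogeneous remainder $\mathcal{L}_{h,2}^{NP}$ is absorbed into the source by Cauchy--Schwarz.

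First I set $\bm{\mu} := (\mathcal{L}_{h}^{NP})^{-1}(\bm{c})$, normalized so that $\overline{\bm{\mu}} = 0$; this is consistent with $\overline{\bm{c}} = 0$ and with the mean-zero convention already used in Lemma~\ref{lem: L_h^NP} and Proposition~\ref{pro: G-1}. Using the splitting in \eqref{eqn: decomposion L_h^NP}, the identity $\mathcal{L}_{h}^{NP}(\bm{\mu}) = \bm{c}$ becomes
\begin{equation*}
\mathcal{L}_{h,1}^{NP}(\bm{\mu}) = \bm{c} - \mathcal{L}_{h,2}^{NP}.
\end{equation*}

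Next I exploit monotonicity. Since $\mathcal{L}_{h,1}^{NP}$ is the homogeneous part of $\mathcal{L}_{h}^{NP}$, one has $\mathcal{L}_{h}^{NP}(\bm{\mu}) - \mathcal{L}_{h}^{NP}(0) = \mathcal{L}_{h,1}^{NP}(\bm{\mu})$, so applying Lemma~\ref{lem: L_h^NP} with $\bm{\mu}_1 = \bm{\mu}$ and $\bm{\mu}_2 = 0$, followed by the lower bound $\breve{\bm{c}}^{m+1/2} \ge \min \breve{\bm{c}}^{m+1/2} > 0$ and the Poincar\'e inequality from Proposition~\ref{prop:1}, gives
\begin{equation*}
\langle \mathcal{L}_{h,1}^{NP}(\bm{\mu}), \bm{\mu} \rangle_{C}
\ge \bigl\| \sqrt{\breve{\bm{c}}^{m+1/2}}\, \nabla_h \bm{\mu} \bigr\|_2^2
\ge C_1^2 \, \|\bm{\mu}\|_2^2,
\end{equation*}
where $C_1 = (\min \breve{\bm{c}}^{m+1/2})^{1/2} C_0^{-1}$, exactly the constant appearing in Proposition~\ref{pro: G-1}.

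Finally I close the estimate. Pairing the identity $\mathcal{L}_{h,1}^{NP}(\bm{\mu}) = \bm{c} - \mathcal{L}_{h,2}^{NP}$ with $\bm{\mu}$ in the $\langle \cdot , \cdot \rangle_C$ inner product, combined with the coercivity above and the a priori bound $\|\mathcal{L}_{h,2}^{NP}\|_2 \le A^*$, yields
\begin{equation*}
C_1^2 \, \|\bm{\mu}\|_2^2
\le \langle \mathcal{L}_{h,1}^{NP}(\bm{\mu}), \bm{\mu} \rangle_{C}
= \langle \bm{c} - \mathcal{L}_{h,2}^{NP}, \bm{\mu} \rangle_{C}
\le (\|\bm{c}\|_2 + A^*)\, \|\bm{\mu}\|_2,
\end{equation*}
by Cauchy--Schwarz. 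Dividing by $\|\bm{\mu}\|_2$ (if nonzero; otherwise the bound is trivial) produces \eqref{estimate:L_h^NP L2}. There is no real obstacle here: the only subtlety is to keep track of the mean-zero restriction so that the Poincar\'e inequality in Proposition~\ref{prop:1} legitimately upgrades the weighted $H^1$-seminorm coming out of Lemma~\ref{lem: L_h^NP} into the $\ell^2$-norm of $\bm{\mu}$ itself, which is exactly what the argument in Proposition~\ref{pro: G-1} already provides.
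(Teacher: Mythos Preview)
Your proof is correct and follows essentially the same approach as the paper's: both set $\bm{\mu}=(\mathcal{L}_h^{NP})^{-1}(\bm{c})$, use the decomposition~\eqref{eqn: decomposion L_h^NP} to write $\mathcal{L}_{h,1}^{NP}(\bm{\mu})=\bm{c}-\mathcal{L}_{h,2}^{NP}$, invoke the monotonicity bound from Lemma~\ref{lem: L_h^NP} together with Poincar\'e to obtain $\langle \mathcal{L}_{h,1}^{NP}(\bm{\mu}),\bm{\mu}\rangle_C\ge C_1^2\|\bm{\mu}\|_2^2$, and finish with Cauchy--Schwarz and the bound $\|\mathcal{L}_{h,2}^{NP}\|_2\le A^*$. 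The only cosmetic difference is that the paper first bounds by $\|\mathcal{L}_{h,1}^{NP}(\bm{\mu})\|_2$ and then applies the triangle inequality, whereas you substitute $\bm{c}-\mathcal{L}_{h,2}^{NP}$ directly into the inner product; the two orderings are equivalent.
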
 
\begin{proof}
    We denote $\bm{\mu} = \left( \mathcal{L}_{h}^{NP} \right)^{-1}\left(\bm{c}\right)$, for any $\bm{c}$ with $\overline{\bm{c}}=0$. The homogenization decomposition \eqref{eqn: decomposion L_h^NP} implies that $\mathcal{L}_{h,1}^{NP} (\bm{\mu} ) = \bm{c}_{D}:=\bm{c}-\mathcal{L}_{h,2}^{NP}$. On the other hand, the monotonicity estimate \eqref{lem eqn: L_h^NS} indicates that 
\begin{subequations}
    \begin{align}
        \left\langle \mathcal{L}_{h,1}^{NP} \left(\bm{\mu}\right), 
        \bm{\mu}\right\rangle_{C} 
        & \geq \left\|\sqrt{\breve{\bm{c}}^{m+1/2}} \nabla_{h} \bm{\mu} \right\|_{2}^{2} 
        \geq C_{1}^{2}\left\|\bm{\mu}\right\|_{2}^{2}, \quad \text { so that } \\
        \left\|\bm{\mu}\right\|_{2}^{2} 
        & \leq C_{1}^{-2}\left\langle \mathcal{L}_{h,1}^{NP} \left(\bm{\mu}\right), \bm{\mu}\right\rangle_{C} 
        \leq C_{1}^{-2}\left\|\mathcal{L}_{h,1}^{NP} \left(\bm{\mu}\right)\right\|_{2} 
        \cdot\left\|\bm{\mu}\right\|_{2}, \\
        \left\|\bm{\mu}\right\|_{2} 
        & \leq C_{1}^{-2}\left\|\mathcal{L}_{h,1}^{NP} \left(\bm{\mu}\right)\right\|_{2},
\end{align}
\end{subequations}
with an application of Cauchy inequality. Therefore, the following inequality is available: 
\begin{equation}
    \begin{aligned}
        \left\|\left(\mathcal{L}_{h}^{NP}\right)^{-1}\left(\bm{c}\right)\right\|_{2} 
        & =\left\|\bm{\mu}\right\|_{2} 
        \leq C_{1}^{-2}\left\|\mathcal{L}_{h,1}^{NP} \left(\bm{\mu}\right)\right\|_{2}
        =C_{1}^{-2}\left\|\bm{c}-\mathcal{L}_{h,2}^{NP} \right\|_{2} 
        \leq C_{1}^{-2} ( \|\bm{c} \|_{2} + \|\mathcal{L}_{h,1}^{NP} \|_{2} ),
    \end{aligned}
\end{equation}
which is exactly \eqref{estimate:L_h^NP L2}. 
The proof of Proposition \ref{estimate: G-1} is completed. 
\end{proof} 

Based on \eqref{num: p}, \eqref{num: n} and \eqref{def: L_h^NP}, \eqref{scheme: c step2}, we conclude that the numerical solution \eqref{scheme: n} - \eqref{scheme: u} could be equivalently represented as the following nonlinear system, in terms of $\bm{c}^{m+1}$ :
\begin{equation}\label{eqn: mu+c}
    \begin{aligned}
        & \bm{\mu}^{m+1}
        + \frac{1}{\tau} \left(\mathcal{L}_{h}^{NP}\right)^{-1}\left(\bm{c}^{m+1}-\bm{c}^{m}\right) \\
        = & \frac{\bm{c}^{m+1} \ln \bm{c}^{m+1}-\bm{c}^{m} \ln \bm{c}^{m}}{\bm{c}^{m+1}-\bm{c}^{m}}-1
        + \tau \ln \frac{\bm{c}^{m+1}}{\bm{c}^{m}}  
        + (-\Delta_{h} )^{-1} \Big(\bm{c}^{m+1/2}-M\bm{c}^{m+1/2} \Big) 
        \\
        & 
        + \frac{1}{\tau}\left(\mathcal{L}_{h}^{NP}\right)^{-1}\left(\bm{c}^{m+1}-\bm{c}^{m}\right)=0,  \, \, \, 
        M = 
    \begin{pmatrix}
        \bm{0} & \bm{1} \\
        \bm{1} & \bm{0}
\end{pmatrix}.
    \end{aligned}
\end{equation}

{\bf Step 4:} We are going to prove the existence of $\bm{c}^{m+1}$ in \eqref{eqn: mu+c}. Because of the fact that the operator $(\mathcal{L}_{h}^{NP} )^{-1}$ is non-symmetric, a direct application of the discrete energy minimization technique does not work out. Moreover, the Browder-Minty lemma is not directly available to this system, either, which comes from the singularity of $\ln \bm{c} $ as $\bm{c} \rightarrow 0$. To overcome these subtle difficulties, we have to construct a fixed point sequence to justify the analysis; similar ideas have been reported in~\cite{WangchengCHNS2022, WangchengCHNS2024} to deal with Flory-Huggins-Cahn-Hilliard-Navier-Stokes system. 

Define the nonlinear iteration, at the $(k+1)$-th stage: 
\begin{subequations}\label{def: iteration}
    \begin{align}
        \mathcal{G}_{h} (n^{(k+1)} ) 
        := & F_{n^{m}} (n^{(k+1)} )-1 
        + \tau (\ln n^{(k+1)} -\ln n^{m} ) 
        \nonumber \\
        & + (-\Delta_{h} )^{-1} \Big( \frac{1}{2} (n^{(k+1)} + n^{m} )-\frac{1}{2} (p^{(k+1)} + p^{m} ) \Big)
        + A n^{(k+1)}
        \nonumber \\
        = & - \frac{1}{\tau} (\mathcal{L}_{h}^{NP} )^{-1} (n^{(k)}-n^{m} ) 
        + A n^{(k)}, 
        \text { with } n^{(0)}=n^{m}, 
        \label{def: Gh n} \\
        \mathcal{G}_{h} (p^{(k+1)} ) 
        := & F_{p^{m}} (p^{(k+1)} )-1 + \tau (\ln p^{(k+1)} -\ln p^{m} ) 
        \nonumber \\
        & + (-\Delta_{h} )^{-1} \Big(\frac{1}{2} (p^{(k+1)} + p^{m} )-\frac12 (n^{(k+1)} + n^{m} ) \Big)
        + A p^{(k+1)}
        \nonumber \\
        = & - \frac{1}{\tau} (\mathcal{L}_{h}^{NP} )^{-1} (p^{(k)}-p^{m} )
        + A p^{(k)}, 
        \text { with } p^{(0)}=p^{m} . 
        \label{def: Gh p}
    \end{align}
\end{subequations} 
The unique solvability and positivity-preserving property of the numerical system~\eqref{def: iteration}, 
at each iteration stage, is stated in following proposition. 
The proof follows similar ideas as in \cite{Chen2019Positivity}, 
and the technical details are skipped for simplicity of presentation. 

\begin{proposition}
    Given cell-centered functions $p^{m}, ~p^{m-1}, ~p^{(k)}$ 
    and $n^{m}, ~n^{m-1}, ~n^{(k)}$, with a positivity condition 
    $p^{m},~p^{m-1},~n^{m}, ~n^{m-1}>0$, and 
    $\overline{p^{m}}=\overline{p^{m-1}}=\overline{p^{(k)}}=\beta_0 <1$, 
    $\overline{n^{m}}=\overline{n^{m-1}}=\overline{n^{(k)}}=\beta_0 <1$, 
    then there exists a unique solution $n^{(k+1)}$ to \eqref{def: Gh n} 
    and $p^{(k+1)}$ to \eqref{def: Gh p}, with $p^{(k+1)}>0$, $n^{(k+1)}>0$, 
    at a point-wise level, 
    and $\overline{p^{(k+1)}}=\overline{n^{(k+1)}}=\beta_0$. 
    Meanwhile, by the fact that $p^{m}, ~p^{m-1}, ~n^{m}, ~n^{m-1}$ 
    are discrete variables, 
    there is $0<\delta_{m-1}, ~\delta_{m}, ~\delta_{(k)}<1/2$, 
    such that $p^{m}, \, n^m \geq \delta_{m}$, $p^{m-1} , \, n^{m-1}  \geq \delta_{m-1}$, $p^{(k)} , \, n^{(k)} \geq \delta_{(k)}$. In addition, $p^{(k+1)}$ and $n^{(k+1)}$ preserves the estimate $p^{(k+1)} , \,  n^{(k+1)} \geq \delta_{(k+1)}$, where $\delta_{(k+1)}=\min (1/2, \hat{\delta})$ and $\hat{\delta}$ satisfies the following equality:
    \begin{equation}\label{pro: C^*}
        \begin{aligned}
            & \tau (\ln \hat{\delta} - \ln \beta_{0} )
            + 2 \tau | \ln \delta_m | + 1 
            - \frac{2 ( G (\beta_{0} )-G ( \delta_m ) )}{\beta_{0} - \delta_m}
            + C^{*} = 0, 
        \end{aligned}
    \end{equation}  
with $C^{*}= ( \tau^{-1} C_{1}^{-2} h^{-\frac{d}{2}} + C_0) ( \max( \| n^{(k)} - n^m \|_2 , \| p^{(k)} - p^m \|_2 ) + A^* ) +A \beta_0 |\Omega |h^{-2}$. 
\end{proposition}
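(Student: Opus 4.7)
The plan is to recast the coupled nonlinear system \eqref{def: Gh n}--\eqref{def: Gh p} for the pair $(n^{(k+1)}, p^{(k+1)})$ as the Euler--Lagrange equations of a strictly convex functional on a closed, mass-constrained admissible set, then extract positivity by a gradient-blow-up argument on the boundary, and finally derive the explicit lower bound~\eqref{pro: C^*} by evaluating \eqref{def: Gh n} at the pointwise minimum of $n^{(k+1)}$. Introducing the convex antiderivative $\Phi_a(x) := \int_a^x F_a(s)\,ds$ (convex on $(0,\infty)$ by item~1 of Lemma~\ref{lem: F}), I would consider the functional
\begin{align*}
\mathcal{J}_k(n,p) := {} & h^2\sum_{i,j} \Big[ \Phi_{n^m_{i,j}}(n_{i,j}) + \Phi_{p^m_{i,j}}(p_{i,j}) + \tau\bigl(n_{i,j}\ln n_{i,j} - n_{i,j} - n_{i,j}\ln n^m_{i,j}\bigr) \\
& + \tau\bigl(p_{i,j}\ln p_{i,j} - p_{i,j} - p_{i,j}\ln p^m_{i,j}\bigr) - (n_{i,j}+p_{i,j}) + \tfrac{A}{2}(n_{i,j}^2 + p_{i,j}^2) \Big] \\
& + \tfrac14 \|n-p\|_{-1,h}^2 + \tfrac12 \langle n^m - p^m, (-\Delta_h)^{-1}(n-p)\rangle_C - \langle R_n, n\rangle_C - \langle R_p, p\rangle_C,
\end{align*}
where $R_n, R_p$ denote the fixed right-hand sides of \eqref{def: Gh n}--\eqref{def: Gh p}. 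A direct calculation verifies that the stationarity conditions of $\mathcal{J}_k$ over the admissible set $\mathcal{A}_{\beta_0} := \{(n,p) : n,p \geq 0,\ \overline{n} = \overline{p} = \beta_0\}$ reproduce (up to a mass-preserving Lagrange constant absorbed into the chemical potential) exactly \eqref{def: Gh n}--\eqref{def: Gh p}.

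Since $\mathcal{A}_{\beta_0}$ is compact and convex, and $\mathcal{J}_k$ extends continuously to it (because $x\ln x$ and $\Phi_a(x)$ admit continuous limits at $x=0^+$) and is strictly convex (the $\tau x\ln x$ and $\tfrac{A}{2} x^2$ contributions are strictly convex, while the $\|\cdot\|_{-1,h}^2$ coupling is positive semidefinite), a unique minimizer $(n^*, p^*) \in \mathcal{A}_{\beta_0}$ exists. To rule out boundary minima, suppose $n^*_{i_0,j_0} = 0$ and take a mean-preserving perturbation $v$ with $v_{i_0,j_0} = +1$ and $v_{i_1,j_1} = -1$ at an interior point where $n^* > 0$. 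The one-sided directional derivative $\tfrac{d}{d\varepsilon}\mathcal{J}_k(n^* + \varepsilon v, p^*)\big|_{\varepsilon=0^+}$ inherits the divergent contribution $h^2\tau\ln\varepsilon \to -\infty$ from the $\tau n\ln n$ term at $(i_0, j_0)$, which dominates the bounded contributions elsewhere and contradicts minimality. Hence $n^*, p^* > 0$ pointwise, the interior Euler--Lagrange identity produces the unique positive solution of \eqref{def: Gh n}--\eqref{def: Gh p}, and the mass conservation $\overline{n^{(k+1)}} = \overline{p^{(k+1)}} = \beta_0$ is built into the constraint.

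For the quantitative lower bound, let $(i_0, j_0)$ be a point where $n^{(k+1)}$ attains its minimum $n^{(k+1)}_{\min}$ and evaluate \eqref{def: Gh n} there. Using item~2 of Lemma~\ref{lem: F} with $a = n^m_{i_0,j_0} \in [\delta_m, \beta_0]$ provides the upper bound $F_{n^m_{i_0,j_0}}(n^{(k+1)}_{\min}) \leq F_{n^m_{i_0,j_0}}(\beta_0)$ together with the refined secant-slope contribution $\tfrac{2(G(\beta_0)-G(\delta_m))}{\beta_0 - \delta_m}$ appearing in~\eqref{pro: C^*}; the logarithmic term satisfies $\tau(\ln n^{(k+1)}_{\min} - \ln n^m_{i_0,j_0}) \leq \tau\ln n^{(k+1)}_{\min} + \tau|\ln\delta_m|$; and the pointwise contributions of $An^{(k+1)}_{\min}$ and of the $(-\Delta_h)^{-1}$ coupling are controlled by $A\beta_0|\Omega| h^{-2}$ and a multiple of $C_0$, respectively. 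The right-hand side is treated by converting the $\ell^2$ bound from Proposition~\ref{estimate: G-1} to an $\ell^\infty$ bound at a single grid point, picking up an $h^{-d/2}$ factor which, combined with $\tau^{-1}C_1^{-2}(\|n^{(k)} - n^m\|_2 + A^*)$, assembles exactly into the constant $C^*$ of the proposition. Assuming for contradiction that $n^{(k+1)}_{\min} < \hat\delta$, with $\hat\delta$ the unique positive root of~\eqref{pro: C^*}, then violates the equation at $(i_0, j_0)$; hence $n^{(k+1)}_{\min} \geq \hat\delta$, and the identical argument for $p^{(k+1)}$ completes the proof.

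The main obstacle will be this last step, where producing a lower bound that is explicit, uniform in the iteration index $k$, and tight enough to take the specific form \eqref{pro: C^*} demands careful bookkeeping. The delicate ingredients are the $\ell^2\to\ell^\infty$ conversion of $(\mathcal{L}_h^{NP})^{-1}$ bounds at a single grid point (responsible for the $h^{-d/2}$ scaling in $C^*$) and the sharp use of Lemma~\ref{lem: F} to retain the secant-slope contribution $\tfrac{2(G(\beta_0)-G(\delta_m))}{\beta_0 - \delta_m}$ rather than a coarser $\ln\delta_m$ bound. Because $\tau\ln\hat\delta \to -\infty$ as $\hat\delta \to 0^+$ while every other contribution to the left-hand side of \eqref{pro: C^*} stays bounded, a unique positive root $\hat\delta$ always exists and closes the argument.
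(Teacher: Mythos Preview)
The paper does not give a proof of this proposition: it states that ``the proof follows similar ideas as in \cite{Chen2019Positivity}, and the technical details are skipped for simplicity of presentation.'' Your outline is precisely that approach --- recasting the coupled iteration as the constrained minimization of a strictly convex energy with logarithmic singularities, obtaining existence/uniqueness by compactness and strict convexity, and excluding boundary minima via the $-\infty$ directional derivative of $\tau n\ln n$ --- so the strategy matches what the paper defers to.

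One step is stated too loosely. For the quantitative lower bound you write ``evaluate \eqref{def: Gh n} there'' at the minimizing index $(i_0,j_0)$. But the Euler--Lagrange condition of your constrained problem reads $\mathcal{G}_h(n^{(k+1)})_{i,j} - (R_n)_{i,j} = \lambda_n$ at every grid point, with an unknown Lagrange multiplier $\lambda_n$ enforcing the mass constraint; evaluating at a single point cannot yield a useful inequality since $\lambda_n$ is not a priori controlled. The correct move (and the source of the $\ln\beta_0$ term in \eqref{pro: C^*}) is the two--point comparison from \cite{Chen2019Positivity}: pick a second index $(i_1,j_1)$ with $n^{(k+1)}_{i_1,j_1} \ge \beta_0$ (such a point exists by the average constraint), take the mass--preserving direction $v$ that is $+1$ at $(i_0,j_0)$ and $-1$ at $(i_1,j_1)$, and use the first--order optimality inequality $\langle \nabla_n\mathcal{J}_k, v\rangle \ge 0$. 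This subtracts the values of $\mathcal{G}_h$ and $R_n$ at the two indices, eliminating $\lambda_n$, and produces exactly the terms $\tau(\ln\hat\delta - \ln\beta_0)$, $2\tau|\ln\delta_m|$, and the secant slope $2(G(\beta_0)-G(\delta_m))/(\beta_0-\delta_m)$ via Lemma~\ref{lem: F}. A second minor slip: the $A\beta_0|\Omega|h^{-2}$ contribution to $C^*$ comes from bounding $A\,n^{(k)}_{i_1,j_1}$ on the \emph{right-hand side} (using $n^{(k)}>0$, $\overline{n^{(k)}}=\beta_0$, hence $\|n^{(k)}\|_\infty \le \beta_0|\Omega|h^{-2}$), not from $A\,n^{(k+1)}_{\min}$. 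With these two corrections your argument goes through and matches the referenced one.
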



The main result of this section is stated below.
\begin{theorem} \label{thm: solvability} 
    Given cell-centered functions $n^{m}, \, n^{m-1}, \, p^{m} , \, p^{m-1} > 0$, at a point-wise level, and $\overline{n^{m}}=$ $\overline{n^{m-1}}=\overline{p^{m}}=$ $\overline{p^{m-1}}=\beta_{0}$, then there exists the unique cell-centered solution $n^{m+1}$ and $p^{m+1}$ to \eqref{scheme: main}, with $n^{m+1} ,  \, p^{m+1} > 0$, at a point-wise level, and $\overline{n^{m+1}}=\overline{p^{m+1}}=\beta_{0}$.
\end{theorem}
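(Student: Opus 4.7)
My plan is to use the fixed-point iteration \eqref{def: iteration}, whose one-step solvability and strict positivity are already delivered by the preceding proposition, and to pass to the limit $k \to \infty$ in order to produce a strictly positive solution $\bm{c}^{m+1} = (n^{m+1}, p^{m+1})$ of the fully coupled system \eqref{eqn: mu+c}. Once this ionic solution is in hand, the velocities $\hat{\bm{u}}^{m+1/2}$, $\hat{\bm{u}}^{m+1}$ and $\bm{u}^{m+1}$ along with the pressure $\psi^{m+1}$ are reconstructed from Step 1 via the operator $\mathcal{L}_h^{NS}$ and the discrete Helmholtz projection, and the mass conservation identity $\overline{n^{m+1}} = \overline{p^{m+1}} = \beta_0$ is inherited stage by stage from the iteration.

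The critical technical step is to close the positivity estimate \eqref{pro: C^*} uniformly in $k$: the lower bound $\delta_{(k+1)}$ depends on $\max(\|n^{(k)}-n^m\|_2,\|p^{(k)}-p^m\|_2)$ through $C^*$, so I need a uniform a priori bound $\|\bm{c}^{(k)}-\bm{c}^m\|_2 \le \rho$ that does not degrade as $k$ grows. To derive it I would take a discrete inner product of \eqref{def: Gh n}--\eqref{def: Gh p} with the mean-zero quantities $n^{(k+1)}-n^m$ and $p^{(k+1)}-p^m$, keeping the non-negative contributions from the monotone $F_{n^m}$ and $\ln$ terms (Lemma \ref{lem: F}) and from the symmetric pairing $\langle (-\Delta_h)^{-1}(\mathrm{I}-M)\bm{c},\bm{c}\rangle_C = \|n-p\|_{-1,h}^2 \ge 0$, while bounding the right-hand side via Proposition \ref{estimate: G-1}, namely $\|(\mathcal{L}_h^{NP})^{-1}(\bm{c}^{(k)}-\bm{c}^m)\|_2 \le C_1^{-2}(\|\bm{c}^{(k)}-\bm{c}^m\|_2 + A^*)$. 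Choosing the artificial regularization coefficient $A$ large enough so that the $A$-induced coercivity on the left dominates the growth on the right then yields the uniform bound $\rho$, and \eqref{pro: C^*} upgrades it to a uniform positive lower bound $\delta_{(k)} \ge \delta^* > 0$.

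Once the iterates lie in the compact set $\Sigma := \{\bm{c}: \bm{c} \ge \delta^*,\ \|\bm{c}-\bm{c}^m\|_2 \le \rho,\ \overline{\bm{c}} = \beta_0\}$, on which every operator appearing in \eqref{def: iteration} is continuous (the singular logarithm is Lipschitz once one is uniformly away from zero), finite-dimensional compactness extracts a convergent subsequence $\bm{c}^{(k_j)} \to \bm{c}^{m+1} \in \Sigma$. Passing to the limit in \eqref{def: iteration}, the artificial $A\bm{c}^{(k+1)}$ and $A\bm{c}^{(k)}$ terms cancel and the identity \eqref{eqn: mu+c} is recovered, delivering existence and strict positivity. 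For uniqueness, given two positive solutions $\bm{c}_1^{m+1},\bm{c}_2^{m+1}$ of \eqref{eqn: mu+c} sharing the common mean $\beta_0$, I take the discrete inner product of the difference equation with $\bm{c}_D := \bm{c}_1^{m+1}-\bm{c}_2^{m+1}$: each of $F_{\bm{c}^m}$, $\ln$, the symmetric operator $(-\Delta_h)^{-1}(\mathrm{I}-M)$, and $(\mathcal{L}_h^{NP})^{-1}$ contributes non-negatively (the last by Proposition \ref{pro: G-1}, the third giving $\tfrac12\|n_D-p_D\|_{-1,h}^2 \ge 0$), and strict monotonicity of $\ln$ on $(0,\infty)$ forces $\bm{c}_D \equiv 0$.

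The main obstacle is precisely the uniform $\ell^2$ bound on the iterates: without it the positivity floor \eqref{pro: C^*} can collapse as $k \to \infty$ and the compactness argument breaks. The non-symmetry of $(\mathcal{L}_h^{NP})^{-1}$ precludes a direct convex-minimization existence proof, and the logarithmic singularity obstructs a plain Browder--Minty argument; the $A$-regularization in \eqref{def: iteration} is introduced exactly to dominate the operator norm $\tau^{-1}C_1^{-2}$ of the non-symmetric term furnished by Proposition \ref{estimate: G-1}, so that the two thresholds $\rho$ and $\delta^*$ can be closed simultaneously and the fixed-point limit is legitimate.
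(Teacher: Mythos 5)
The skeleton of your argument (use the iteration \eqref{def: iteration}, establish uniform positivity and boundedness, pass to a limit, then prove uniqueness by monotonicity) is in the right neighborhood, but the existence step has a real gap, and you are working harder than necessary for the uniform bound.

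First, the existence step. Extracting a subsequence $\bm{c}^{(k_j)} \to \bm{c}^{m+1}$ from the compact set $\Sigma$ does not let you pass to the limit in \eqref{def: iteration}. The iteration couples consecutive indices: $\mathcal{G}_h(\bm{c}^{(k+1)})$ on the left is driven by $\bm{c}^{(k)}$ on the right, so along a subsequence $\{k_j\}$ you would need $\bm{c}^{(k_j)}$ \emph{and} $\bm{c}^{(k_j+1)}$ to converge to the \emph{same} limit. Compactness only gives you a further subsequence along which $\bm{c}^{(k_j+1)}$ converges to some possibly different $\bm{c}^{**}$; passing to the limit then yields $\mathcal{G}_h(\bm{c}^{**}) = -\tau^{-1}(\mathcal{L}_h^{NP})^{-1}(\bm{c}^{*}-\bm{c}^m) + A\bm{c}^{*}$ with $\bm{c}^{*} \neq \bm{c}^{**}$ in general, which is not a fixed-point identity. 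The $A$-terms do not cancel. This is exactly the obstruction the paper's contraction argument is designed to remove: by taking the difference of two consecutive iteration stages (testing against $\zeta^{(k+1)}_n = n^{(k+1)}-n^{(k)}$ rather than against $n^{(k+1)}-n^m$), exploiting the inverse inequality $\|\zeta\|_{-1,h}^2 \ge \breve{C}_0^{-2}h^2\|\zeta\|_2^2$ from Proposition~\ref{prop:1}, the operator bound from Proposition~\ref{estimate: G-1}, and then choosing $A \ge \breve{C}_0^2 C_1^4 \tau^{-2} h^{-2}$, the paper obtains $B_0\|\zeta^{(k+1)}\|_2^2 \le B_1 \|\zeta^{(k)}\|_2^2$ with $B_0 > B_1$. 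Contraction makes the \emph{whole} sequence Cauchy, so there is no index misalignment, the limit is a genuine fixed point, and existence and uniqueness come out together. (You could alternatively try Brouwer's fixed point theorem on $\Sigma$, but then you would have to verify that the one-step solution map is a self-map of $\Sigma$ --- including preservation of the $\ell^2$ bound --- which also requires an $A$-type quantitative estimate; that is not the route the paper takes.)

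Second, the uniform bound you labor to derive is unnecessary. Since $n^{(k)}, p^{(k)} > 0$ with $\overline{n^{(k)}} = \overline{p^{(k)}} = \beta_0$, the trivial bound $\|n^{(k)}\|_\infty, \|p^{(k)}\|_\infty \le M_h := h^{-2}\beta_0$ holds automatically at every stage (a single cell cannot carry more than the total mass). This is exactly what the paper plugs into \eqref{pro: C^*} to replace $C^*$ by the stage-independent $\hat{C}^*$ and obtain a uniform positivity floor $\hat{\delta}$. Your proposed $\ell^2$ a priori estimate by pairing \eqref{def: Gh n}--\eqref{def: Gh p} with $n^{(k+1)}-n^m$, $p^{(k+1)}-p^m$ does not close cleanly either: after moving the $A n^{(k)}$ term to the left, the $A$-contribution is $A\langle n^{(k+1)}-n^{(k)},\, n^{(k+1)}-n^m\rangle_C$, which has no sign and does not deliver coercivity in $\|n^{(k+1)}-n^m\|_2^2$ without additional telescoping; the paper avoids this entirely.

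Your separate uniqueness argument via monotonicity of $F_{\bm{c}^m}$, $\ln$, $(-\Delta_h)^{-1}(\mathrm{I}-M)$ and Proposition~\ref{pro: G-1} is correct and is a reasonable alternative to extracting uniqueness from the contraction estimate, but it does not rescue the existence gap.
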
 
\begin{proof}
With $n^{(0)}=n^{m}$, $p^{(0)}=p^{m}$, the iteration solution generated by \eqref{def: iteration} satisfies  $n^{(1)}, p^{(1)} \ge \delta_{(1)} = \min (\frac12, \hat{\delta}_{(1)} )$, where $\hat{\delta}_{(1)}$ satisfies equality \eqref{pro: C^*}. In more details, the following inequality is observed 
\begin{equation}
\begin{aligned} 
    C^{*}= &  ( \tau^{-1} C_{1}^{-2} h^{-\frac{d}{2}} + C_0) ( \max( \| n^{(k)} - n^m \|_2 , \| p^{(k)} - p^m \|_2 ) + A^* ) +A \beta_0 |\Omega |h^{-2} 
\\
    \le & 
     ( \tau^{-1} C_{1}^{-2} h^{-\frac{d}{2}} + C_0 ) ( 2 M_h |\Omega|^{\frac{1}{2}} + A^* ) 
     +A \beta_0 |\Omega |h^{-2} =: \hat{C}^{*} ,   \quad 
     M_h = h^{-2} \beta_0. 
\end{aligned} 
\end{equation} 
Notice that $\| n^m \|_\infty , \, \| p^m \|_\infty , \, \| n^{(k)} \|_\infty , \, \| p^{(k)} \|_\infty \le M_h$, for any $k \ge 0$, as long as these numerical solutions stay positive. Subsequently, we are able to replace $C^{*}$ by $\hat{C}^{*}$, and obtain a modified equality of \eqref{pro: C^*}, so that $\hat{\delta}$ becomes a constant independent of the iteration stage $k$. Therefore, if the sequence generated by \eqref{def: iteration} has a limit, denoted as $(n^{m+1}, p^{n+1})$, its lower bound has to satisfy \eqref{pro: C^*}, with $C^{*}$ replaced by $\hat{C}^{*}$, since the later one is independent of iteration stage $k$.  

Next, we have to prove that $\mathcal{G}_{h}$ is a contraction mapping, so that the existence analysis could be derived by taking $k \rightarrow+\infty$ on both sides of \eqref{def: Gh n} and \eqref{def: Gh p}. To perform such an analysis, the difference function between two consecutive iteration stages is defined as 
\begin{equation}
    \zeta^{(k)}_{n}:=n^{(k)}-n^{(k-1)}, \quad 
    \zeta^{(k)}_{p}:=p^{(k)}-p^{(k-1)}, \text { for } k \geq 1. 
\end{equation} 
By the fact that $\overline{n^{(k)}}=\overline{n^{(k-1)}}= \overline{p^{(k)}}=\overline{p^{(k-1)}}=\beta_0$, we immediately get $\overline{\zeta^{(k)}_{n}} = 0$ and $\overline{\zeta^{(k)}_{p}} = 0$.

Taking a difference of \eqref{def: Gh n} and \eqref{def: Gh p}, between the $k^{\text {th}}$ and $(k+1)^{\text {st}}$ solutions, yields 
\begin{subequations}
    \begin{align}
        & \mathcal{G}_{h} (n^{(k+1)} )-\mathcal{G}_{h} (n^{(k)} ) 
        \nonumber \\
        = & F_{n^{m}} (n^{(k+1)} ) -F_{n^{m}} (n^{(k)} )
        + \tau (\ln n^{(k+1)}-\ln n^{(k)} ) + A \zeta^{(k+1)}_{n} 
        + (-\Delta_h )^{-1} \zeta^{(k+1)}_{n} 
        \nonumber \\
        = & -\frac{1}{\tau} (\mathcal{L}_{h}^{NP} )^{-1} (\zeta^{(k)}_{n} ) + A \zeta^{(k)}_{n}, 
        \label{dif: n} \\
        & \mathcal{G}_{h} (p^{(k+1)} )-\mathcal{G}_{h} (p^{(k)} ) 
        \nonumber \\
        = & F_{p^{m}} (p^{(k+1)} ) -F_{p^{m}} (p^{(k)} )
        + \tau (\ln p^{(k+1)}-\ln p^{(k)} ) 
        + A \zeta^{(k+1)}_{p} 
        + (-\Delta_h )^{-1} \zeta^{(k+1)}_{p} 
        \nonumber \\
        = & -\frac{1}{\tau} (\mathcal{L}_{h}^{NP} )^{-1} (\zeta^{(k)}_{p} )
        + A \zeta^{(k)}_{p}. 
        \label{dif: p}
    \end{align}
\end{subequations} 

Taking a discrete inner product with \eqref{dif: n} and \eqref{dif: p}, by $\zeta^{(k+1)}_{n}$ and $\zeta^{(k+1)}_{p}$ separately, gives 
\begin{subequations} \label{iteration est-1} 
    \begin{align}
        & \left\langle F_{n^{m}} (n^{(k+1)} ) -F_{n^{m}} (n^{(k)} ), \zeta^{(k+1)}_{n} \right\rangle_{C} 
         + \tau \left\langle \ln n^{(k+1)} -\ln n^{(k)}, \zeta^{(k+1)}_{n}\right\rangle_{C} 
         + A \| \zeta^{(k+1)}_{n} \|_2^2  
        \nonumber \\ 
        & 
        + \| \zeta^{(k+1)}_{n} \|_{-1, h}^2  
        = -\frac{1}{\tau} \langle (\mathcal{L}_{h}^{NP} )^{-1} (\zeta^{(k)}_{n} ), \zeta^{(k+1)}_{n} \rangle_{C} 
        + A \langle \zeta^{(k)}_{n} , \zeta^{(k+1)}_{n} \rangle_C , 
        \label{ite: n} \\
        & \left\langle F_{p^{m}} (p^{(k+1)} ) - F_{p^{m}} (p^{(k)} ), \zeta^{(k+1)}_{p} \right\rangle_{C} 
        + \tau \left\langle \ln p^{(k+1)}-\ln p^{(k)}, \zeta^{(k+1)}_{p} \right\rangle_{C} 
        + A \| \zeta^{(k+1)}_{p} \|_2^2 
        \nonumber \\ 
        & 
        + \| \zeta^{(k+1)}_{p} \|_{-1, h}^2 
        = -\frac{1}{\tau} \langle (\mathcal{L}_{h}^{NP} )^{-1} \zeta^{(k)}_{p} ), \zeta^{(k+1)}_{p} \rangle_{C} 
        + A \langle \zeta^{(k)}_{p}, \zeta^{(k+1)}_p \rangle_C .  
        \label{ite: p}
    \end{align}
\end{subequations} 
As an application of Lemma \ref{lem: F}, combined with the monotonicity of the logarithmic function, it is clear that the first two terms of \eqref{ite: n} and \eqref{ite: p} have to be non-negative:
\begin{subequations}\label{ite: F}
    \begin{align}
        & \left\langle F_{p^{m}} (p^{(k+1)} ) -F_{p^{m}} (p^{(k)} ), 
        \zeta^{(k+1)}_{p} \right\rangle_{C} \geq 0,  \quad 
        \left\langle F_{n^{m}} (n^{(k+1} ) -F_{n^{m}} (n^{(k)} ), 
        \zeta^{(k+1)}_{n} \right\rangle_{C} \geq 0, 
        \\
        & 
        \langle\ln p^{(k+1)}-\ln p^{(k)}, \zeta^{(k+1)}_{p} \rangle_{C} \geq 0,  \quad 
        \langle\ln n^{(k+1)}-\ln n^{(k)}, \zeta^{(k+1)}_{n} \rangle_{C} \geq 0.
\end{align}
\end{subequations} 
In terms of the iteration relaxation, an application of triangular equality reveals that 
\begin{subequations}\label{ite: zeta}
    \begin{align}
        \langle\zeta^{(k+1)}_{n}, \zeta^{(k+1)}_{n}-\zeta^{(k)}_{n} \rangle_{C} 
        = & \frac{1}{2} \Big( \|\zeta^{(k+1)}_{n} \|_2^{2}- \|\zeta^{(k)}_{n} \|_{2}^{2}
        + \|\zeta^{(k+1)}_{n}-\zeta^{(k)}_{n} \|_{2}^{2} \Big), 
        \\ 
         \langle\zeta^{(k+1)}_{p}, \zeta^{(k+1)}_{p}-\zeta^{(k)}_{p} \rangle_{C} 
        = & \frac{1}{2} \Big( \|\zeta^{(k+1)}_{p} \|_2^{2}- \|\zeta^{(m)}_{p} \|_{2}^{2}
        + \|\zeta^{(k+1)}_{p}-\zeta^{(k)}_{p} \|_{2}^{2} \Big). 
    \end{align}
\end{subequations}
Regarding the two electronic potential diffusion terms, an application of inverse inequality in  Proposition~\ref{prop:1} implies that 
\begin{equation} 
   \| \zeta^{(k+1)}_{n} \|_{-1, h}^2  \ge \breve{C}_0^{-2} h^2 \| \zeta^{(k+1)}_{n} \|_2^2 ,  \quad 
   \| \zeta^{(k+1)}_{p} \|_{-1, h}^2  \ge \breve{C}_0^{-2} h^2 \| \zeta^{(k+1)}_{p} \|_2^2  . 
\end{equation} 
The right hand side terms of \eqref{ite: n} and \eqref{ite: p} are related to the asymmetric operator $\left(\mathcal{L}_{h}^{NP}\right)^{-1}$. The following bounds could be derived: 
\begin{subequations}\label{ite: L_h^NP}
        \begin{align}
            \langle (\mathcal{L}_{h}^{NP} )^{-1} (\zeta^{(k)}_{n} ), \zeta^{(k+1)}_{n} \rangle_{C} 
            & = \langle (\mathcal{L}_{h}^{NP} )^{-1} (\zeta^{(k)}_{n} ), \zeta^{(k)}_{n} \rangle_{C}
            + \langle (\mathcal{L}_{h}^{NP} )^{-1} (\zeta^{(k)}_{n} , \zeta^{(k+1)}_{n}-\zeta^{(k)}_{n} \rangle_{C} 
            \nonumber \\
            & \ge \langle (\mathcal{L}_{h}^{NP} )^{-1} (\zeta^{(k)}_{n} ), 
            \zeta^{(k+1)}_{n}-\zeta^{(k)}_{n} \rangle_{C} 
            \nonumber \\
            & \ge - \| (\mathcal{L}_{h}^{NP} )^{-1} (\zeta^{(k)}_{n} ) \|_{2} \cdot 
            \|\zeta^{(k+1)}_{n}-\zeta^{(k)}_{n} \|_{2} 
            \nonumber \\
            & \ge -C_1^{-2} \|\zeta^{(k)}_{n} \|_{2} \cdot \|\zeta^{(k+1)}_{n}-\zeta^{(k)}_{n} \|_{2} 
            \nonumber \\
            & \ge - \frac12 \breve{C}_0^{-2} \tau h^2 \|\zeta^{(k)}_{n} \|_{2}^{2} 
            - \frac{\breve{C}_0^2 C_1^4}{2 \tau h^2} \|\zeta^{(k+1)}_{n}-\zeta^{(k)}_{n} \|_{2}^{2}, 
            \label{ite: L_h_N} 
            \\
            \langle (\mathcal{L}_{h}^{NP} )^{-1} (\zeta^{(k)}_{p} ), \zeta^{(k+1)}_{p} \rangle_{C} 
            & \ge - \frac12 \breve{C}_0^{-2} \tau h^2 \|\zeta^{(k)}_{p} \|_{2}^{2} 
            - \frac{\breve{C}_0^2 C_1^4}{2 \tau h^2} \|\zeta^{(k+1)}_{p}-\zeta^{(k)}_{p} \|_{2}^2 ,  \, \, \, 
            \mbox{(similarly)} .  
            \label{ite: L_h_P}
\end{align}
\end{subequations}
Notice that the inequality used in the fourth step, $\| (\mathcal{L}_{h}^{NP} )^{-1} (\zeta^{(k)}_{n} ) \|_{2}  \le C_1^{-2}  \|\zeta^{(k)}_{n} \|_{2}$, comes from estimate~\eqref{estimate:L_h^NP L2} in Proposition~\ref{estimate: G-1}, combined with the fact that all the non-homogeneous parts have been cancelled. Therefore, a substitution of \eqref{ite: F}-\eqref{ite: L_h^NP} into~\eqref{iteration est-1} results in 
\begin{equation}
    \begin{aligned} 
        & (\frac{A}{2}+ \breve{C}_0^{-2}  h^2 ) \|\zeta^{(k+1)}_{n} \|_{2}^{2}
        + \frac{A}{2} \|\zeta^{(k+1)}_{n}-\zeta^{(k)}_{n} \|_{2}^{2} 
        \le (\frac{A}{2}+ \frac12 \breve{C}_0^{-2}  h^2 ) \|\zeta^{(k)}_{n} \|_{2}^{2} 
        + \frac{\breve{C}_0^2 C_1^4}{2 \tau^2 h^2} \|\zeta^{(k+1)}_{n}-\zeta^{(k)}_{n} \|_{2}^{2}, 
        \\
        &  (\frac{A}{2}+ \breve{C}_0^{-2}  h^2 ) \|\zeta^{(k+1)}_{p} \|_{2}^{2}
        + \frac{A}{2} \|\zeta^{(k+1)}_{p}-\zeta^{(k)}_{p} \|_{2}^{2} 
        \le (\frac{A}{2}+ \frac12 \breve{C}_0^{-2}  h^2 ) \|\zeta^{(k)}_{p} \|_{2}^{2} 
        + \frac{\breve{C}_0^2 C_1^4}{2 \tau^2 h^2} \|\zeta^{(k+1)}_{p}-\zeta^{(k)}_{p} \|_{2}^{2} .  
    \end{aligned}
    \label{iteration est-2} 
\end{equation} 
As a result, by taking $A \geq A_{0}:= \breve{C}_0^2 C_1^4 \tau^{-2} h^{-2}$, a constant that may depend on $\tau$, $h$ and $\Omega$, and setting $B_0 = \frac{A}{2}+ \breve{C}_0^{-2}  h^2$, $B_1 = \frac{A}{2}+ \frac12 \breve{C}_0^{-2}  h^2$, we arrive at the following inequality:
\begin{equation} 
        B_0 \|\zeta^{(k+1)}_{n} \|_{2}^{2} \le B_1  \|\zeta^{(k)}_{n} \|_{2}^{2}  , \quad 
        B_0 \|\zeta^{(k+1)}_{p} \|_{2}^{2} \le B_1 \|\zeta^{(k)}_{p} \|_{2}^{2} . 
\end{equation}
Consequently, the nonlinear iteration \eqref{def: iteration} is guaranteed to be a contraction mapping, due to the fact that $B_0 > B_1$. This finishes the proof of Theorem~\ref{thm: solvability}.
\end{proof} 

\section{Total energy stability analysis}\label{sec: stability}
In the finite difference setting, the discrete energy is defined as 
\begin{equation}
    \begin{aligned}
        & E_{h} (n, p) := \langle n ( \ln n - 1) + p ( \ln p -1 ) , \mathrm{1}\rangle_C 
        +\frac{1}{2}\|n-p\|_{-1, h}^{2}, \, \, \,  
        E_{h, total} (n , p , \bm{u} ) := E_{h} ( n , p ) + \frac{1}{2} \|\bm{u} \|_{2}^{2}. 
        \label{total energy-1} 
    \end{aligned}
\end{equation}
The total energy dissipation law is stated in the following theorem.
\begin{theorem}\label{thm: energy}
    The following inequality is valid for the numerical solution of \eqref{scheme: main}, for any $m \ge 0$:
\begin{equation}
    \begin{aligned}
        & \tilde{E}_{h}\left(n^{m+1}, p^{m+1}, \bm{u}^{m+1}, \psi^{m+1}\right) 
        - \tilde{E}_{h}\left( n^{m}, p^{m}, \bm{u}^{m}, \psi^{m} \right)  \\
        = & 
         - \tau \|\nabla_h \bar{\hat{\bm{u}}}^{m+1/2} \|_2^2  
        - \tau ( [\breve{n}^{m+1/2} \nabla_h \mu_{n}^{m+1/2}, \nabla_h \mu_{n}^{m+1/2} ]
        + [\breve{p}^{m+1/2} \nabla_h \mu_{p}^{m+1/2}, \nabla_h \mu_{p}^{m+1/2} ] ), 
\\
   & \mbox{with} \, \, \, 
        \tilde{E}_{h} (n^{m+1}, p^{m+1}, \bm{u}^{m+1}, \psi^{m+1} ) 
         = E_{h, total} (n^{m+1}, p^{m+1}, \bm{u}^{m+1} ) + \frac{\tau^{2}}{8} \|\nabla_h \psi^{m+1} \|_2^2 .
    \end{aligned} 
    \label{mod energy-1} 
\end{equation} 
\end{theorem}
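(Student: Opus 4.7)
The strategy is to derive two energy identities separately — one from the ion-concentration / chemical-potential equations, and one from the momentum / projection pair — and then to add them so that the ion-fluid coupling terms cancel exactly. The modified pressure contribution $\tfrac{\tau^2}{8}\|\nabla_h \psi^{m+1}\|_2^2$ will emerge naturally from the pressure-correction splitting.

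\textbf{Step 1 (concentration energy).} Take the discrete $\langle\cdot,\cdot\rangle_C$ inner product of \eqref{scheme: n} with $\tau\mu_n^{m+1/2}$ and of \eqref{scheme: p} with $\tau\mu_p^{m+1/2}$. Using identity~\eqref{lem: discrete inner of g and afu}, the advective term becomes $\tau\langle \hat{\bm{u}}^{m+1/2},\mathcal{A}_h\tilde{n}^{m+1/2}\nabla_h\mu_n^{m+1/2}\rangle_1$ and similarly for $p$, while the mobility term produces $-\tau[\breve{n}^{m+1/2}\nabla_h\mu_n^{m+1/2},\nabla_h\mu_n^{m+1/2}]$ (analogously for $p$). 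On the left, the modified Crank-Nicolson form of $\mu^{m+1/2}$ gives the key algebraic identity: invoking Lemma~\ref{lem: F} and mass conservation \eqref{mass conserv-1},
\begin{equation*}
\langle F_{n^m}(n^{m+1})-1,\, n^{m+1}-n^m\rangle_C = \langle n^{m+1}(\ln n^{m+1}-1)-n^m(\ln n^m-1),\,1\rangle_C,
\end{equation*}
which is the exact entropy increment. The $\tau\ln(n^{m+1}/n^m)$ contribution yields $\tau\langle \ln(n^{m+1}/n^m), n^{m+1}-n^m\rangle_C\ge 0$ by monotonicity of $\ln$. The electric pieces telescope in the standard way:
\begin{equation*}
\langle (-\Delta_h)^{-1}(n^{m+1/2}-p^{m+1/2}),\, (n^{m+1}-n^m)-(p^{m+1}-p^m)\rangle_C = \tfrac{1}{2}\bigl(\|n^{m+1}-p^{m+1}\|_{-1,h}^2-\|n^m-p^m\|_{-1,h}^2\bigr).
\end{equation*}

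\textbf{Step 2 (kinetic energy and pressure correction).} Take the $\langle\cdot,\cdot\rangle_1$ inner product of \eqref{scheme: u_hat} with $\tau\hat{\bm{u}}^{m+1/2}$. The skew-symmetric convection vanishes by \eqref{lem: discrete inner}, and the diffusion contributes $\tau\|\nabla_h\hat{\bm{u}}^{m+1/2}\|_2^2$. The time-derivative term yields $\tfrac{1}{2}(\|\hat{\bm{u}}^{m+1}\|_2^2-\|\bm{u}^m\|_2^2)$. To process the pressure term $\tau\langle\nabla_h\psi^m,\hat{\bm{u}}^{m+1/2}\rangle_1$, use \eqref{scheme: u}–\eqref{scheme: nabla_u} to write $\hat{\bm{u}}^{m+1}=\bm{u}^{m+1}+\tfrac{\tau}{2}\nabla_h(\psi^{m+1}-\psi^m)$, and apply \eqref{lem: discrete inner of u and f} with $\nabla_h\cdot\bm{u}^{m+1}=\nabla_h\cdot\bm{u}^m=0$ to deduce
\begin{equation*}
\|\hat{\bm{u}}^{m+1}\|_2^2 = \|\bm{u}^{m+1}\|_2^2 + \tfrac{\tau^2}{4}\|\nabla_h(\psi^{m+1}-\psi^m)\|_2^2,\qquad \langle \nabla_h\psi^m,\hat{\bm{u}}^{m+1/2}\rangle_1 = \tfrac{1}{2}\langle\nabla_h\psi^m,\hat{\bm{u}}^{m+1}\rangle_1 = \tfrac{\tau}{4}\langle\nabla_h\psi^m,\nabla_h(\psi^{m+1}-\psi^m)\rangle_1.
\end{equation*}
Expanding the last inner product via the polarization identity $\langle a,b-a\rangle=\tfrac{1}{2}(\|b\|_2^2-\|a\|_2^2-\|b-a\|_2^2)$, substituting both expressions, and observing that the two $\|\nabla_h(\psi^{m+1}-\psi^m)\|_2^2$ contributions cancel, one obtains
\begin{equation*}
\tfrac{1}{2}(\|\bm{u}^{m+1}\|_2^2-\|\bm{u}^m\|_2^2) + \tfrac{\tau^2}{8}(\|\nabla_h\psi^{m+1}\|_2^2-\|\nabla_h\psi^m\|_2^2) + \tau\|\nabla_h\hat{\bm{u}}^{m+1/2}\|_2^2 = -\tau\langle \mathcal{A}_h\tilde{p}^{m+1/2}\nabla_h\mu_p^{m+1/2}+\mathcal{A}_h\tilde{n}^{m+1/2}\nabla_h\mu_n^{m+1/2},\,\hat{\bm{u}}^{m+1/2}\rangle_1.
\end{equation*}

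\textbf{Step 3 (combine and conclude).} Add the identity from Step~1 to the one from Step~2. The cross-coupling source terms cancel exactly (they appear with opposite signs), and the logarithmic relaxation contributions are non-negative and can be absorbed into the dissipation budget. After rearranging, the left-hand side is precisely the increment of $\tilde{E}_h$, and the right-hand side is $-\tau\|\nabla_h\hat{\bm{u}}^{m+1/2}\|_2^2$ together with the two mobility-weighted $\ell^2$ norms of $\nabla_h\mu_n^{m+1/2}$ and $\nabla_h\mu_p^{m+1/2}$, which matches \eqref{mod energy-1}.

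The main obstacle I anticipate is the bookkeeping in Step~2: one must juggle $\hat{\bm{u}}^{m+1/2}$, $\hat{\bm{u}}^{m+1}$, and $\bm{u}^{m+1}$ carefully so that the spurious $\|\nabla_h(\psi^{m+1}-\psi^m)\|_2^2$ contributions from the projection cancel against those arising from evaluating the pressure at the old time level $\psi^m$, leaving exactly the telescoping form $\tfrac{\tau^2}{8}(\|\nabla_h\psi^{m+1}\|_2^2-\|\nabla_h\psi^m\|_2^2)$; this is the structural reason the augmented energy $\tilde{E}_h$ (rather than $E_{h,\mathrm{total}}$) is the quantity that actually dissipates.
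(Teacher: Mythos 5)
Your proof follows the paper's own argument essentially step for step: test the momentum update \eqref{scheme: u_hat} against $\hat{\bm{u}}^{m+1/2}$, test \eqref{scheme: n}--\eqref{scheme: p} against $\tau\mu_{n}^{m+1/2}$, $\tau\mu_{p}^{m+1/2}$, exploit the projection identity $\nabla_h\cdot\hat{\bm{u}}^{m+1}=\tfrac{\tau}{2}\Delta_h(\psi^{m+1}-\psi^m)$ so that the $\|\nabla_h(\psi^{m+1}-\psi^m)\|_2^2$ pieces from the pressure telescoping and the projection cancel, and use the $F_{n^m}$, $F_{p^m}$ algebra plus the $H^{-1}$ telescoping for the concentration energies. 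One small remark: your Step~3 correctly observes that the artificial regularization terms $\tau\langle\ln(n^{m+1}/n^m),n^{m+1}-n^m\rangle_C\geq0$ (and the $p$ analogue) are dropped as additional non-negative dissipation, which means the relation one actually proves is the inequality $\tilde{E}_h^{m+1}-\tilde{E}_h^{m}\le -\tau\|\nabla_h\hat{\bm{u}}^{m+1/2}\|_2^2-\tau([\breve{n}^{m+1/2}\nabla_h\mu_n^{m+1/2},\nabla_h\mu_n^{m+1/2}]+[\breve{p}^{m+1/2}\nabla_h\mu_p^{m+1/2},\nabla_h\mu_p^{m+1/2}])$; the paper's displayed ``$=$'' omits those non-negative log terms and should be read as ``$\le$'' (or as an equality with them restored), a minor inconsistency in the statement rather than in your argument.
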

\begin{proof}
A discrete inner product with \eqref{scheme: u} by $\hat{\bm{u}}^{m+1/2}=\frac{1}{2} (\hat{\bm{u}}^{m+1}+\bm{u}^{m} )$ leads to 
    \begin{equation}\label{egy: u_hat}
        \begin{aligned} 
            & \frac{ \|\hat{\bm{u}}^{m+1}\ \|_{2}^{2}- \|\bm{u}^{m} \|_{2}^{2}}{2 \tau}
            + \langle\nabla_h \psi^{m}, \bm{\hat{\bm{u}}}^{m+1/2} \rangle_{1} 
            + \|\nabla_{h} \hat{\bm{u}}^{m+1/2} \|_{2}^{2}
            \\
            & + \langle\mathcal{A}_{h} \tilde{p}^{m+1/2} \nabla_{h} \mu_{p}^{m+1/2}, \hat{\bm{u}}^{m+1/2} \rangle_{1}
            -  \langle\mathcal{A}_{h} \tilde{n}^{m+1/2} \nabla_{h} \mu_{n}^{m+1/2}, 
            \hat{\bm{u}}^{m+1/2} \rangle_{1} = 0 , 
        \end{aligned} 
    \end{equation}
with an application of the summation-by-parts formula \eqref{lem: discrete inner}:
\begin{equation}
    \langle\hat{\bm{u}}^{m+1/2}, \tilde{\bm{u}}^{m+1/2} \cdot \nabla_{h} \hat{\bm{u}}^{m+1/2}
    +\nabla_{h} \cdot (\hat{\bm{u}}^{m+1/2} (\tilde{\bm{u}}^{m+1/2} )^{T} ) \rangle_{1}=0. 
\end{equation} 
Meanwhile, based on the summation by part formula \eqref{lem: discrete inner of u and f}, a discrete inner product with \eqref{scheme: p} by $\bm{u}^{m+1}$ gives 
\begin{equation}\label{egy: u1}
\begin{aligned}
    &  \|\bm{u}^{m+1} \|_2^2  - \|\hat{\bm{u}}^{m+1} \|_2^2 
    + \|\bm{u}^{m+1}-\hat{\bm{u}}^{m+1} \|_2^2 
    \\
    = & \|\bm{u}^{m+1} \|_2^2 
    -  \|\bar{\hat{\bm{u}}}^{m+1/2} \|_2^2 
    + \frac{1}{4}\tau^{2} \|\nabla_h (\psi^{m+1}-\psi^{m} ) \|_2^2 = 0 . 
    \end{aligned}
\end{equation} 
Subsequently, a combination of \eqref{egy: u_hat} and \eqref{egy: u1} yields 
\begin{equation}\label{egy: u and p}
    \begin{aligned}
        & \frac{\|\bm{u}^{m+1}\|_{2}^{2}- \|\bm{u}^{m} \|_{2}^{2}}{2 \tau} 
        + \langle\nabla_{h} \psi^{m}, \hat{\bm{u}}^{m+1/2} \rangle_{1}
        + \frac{1}{8} \tau \|\nabla_h (\psi^{m+1}-\psi^{m} ) \|_2^2  
        + \|\nabla_{h} \hat{\bm{u}}^{m+1/2} \|_2^2 \\
        & + \langle\mathcal{A}_{h} \tilde{p}^{m+1/2} \nabla_{h} \mu_{p}^{m+1/2}, \hat{\bm{u}}^{m+1/2} \rangle_{1}
        - \langle\mathcal{A}_{h} \tilde{n}^{m+1/2} \nabla_{h} \mu_{n}^{m+1/2}, \hat{\bm{u}}^{m+1/2} \rangle_{1} 
        = 0.
    \end{aligned}
\end{equation} 
Regarding the term associated with the pressure gradient, $\langle\nabla_{h} \psi^{m}, \hat{\bm{u}}^{m+1/2} \rangle_{1}$, we begin with the identity,  $\nabla_{h} \cdot \hat{\bm{u}}^{m+1}= \frac{\tau}{2} \Delta_h ( \psi^{m+1} - \psi^{m})$, which comes from \eqref{scheme: p} and \eqref{scheme: nabla_u}. Then we get 
\begin{equation}\label{egy: p}
    \begin{aligned}
        \langle\nabla_{h} \psi^{m}, \hat{\bm{u}}^{m+1/2} \rangle_{1} 
        & = - \langle \psi^{m}, \nabla_h \cdot \hat{\bm{u}}^{m+1/2} \rangle_{C}
        = -\frac{1}{2} \langle \psi^{m}, \nabla_h \cdot \hat{\bm{u}}^{m+1} \rangle_{C} 
        \\
        & = -\frac{1}{4} \tau \langle \psi^{m}, \Delta_h (\psi^{m+1}-\psi^{m} ) \rangle_{C}
        = \frac{1}{4} \tau \langle\nabla_h \psi^{m}, \nabla_h (\psi^{m+1}-\psi^{m} ) \rangle_{1} \\
        & = \frac{\tau}{8} ( \|\nabla_h \psi^{m+1} \|_{2}^{2} 
        -  \|\nabla_h \psi^{m} \|_2^2  
        - \|\nabla_h (\psi^{m+1}-\psi^{m} ) \|_2^2 ) .
    \end{aligned}
\end{equation} 
In turn, a substitution of \eqref{egy: p} into \eqref{egy: u and p} gives
\begin{equation}\label{egy: u2}
    \begin{aligned}
        & \frac{\|\bm{u}^{m+1} \|_{2}^{2}- \|\bm{u}^{m} \|_{2}^{2}}{2 \tau} 
        + \frac{\tau}{8} ( \|\nabla_{h} \psi^{m+1} \|_{2}^{2}- \|\nabla_{h} \psi^{m}\ \|_{2}^{2} )
        + \| \nabla_{h} \hat{\bm{u}}^{m+1/2} \|_{2}^{2} \\
        & + \langle\mathcal{A}_{h} \tilde{p}^{m+1/2} \nabla_{h} \mu_{p}^{m+1/2}, \hat{\bm{u}}^{m+1/2} \rangle_{1}
        +  \langle\mathcal{A}_{h} \tilde{n}^{m+1/2} \nabla_{h} \mu_{n}^{m+1/2}, \hat{\bm{u}}^{m+1/2} \rangle_{1} 
        = 0. 
    \end{aligned}
\end{equation} 
Meanwhile, taking inner product with \eqref{scheme: n} and \eqref{scheme: p} by $\tau \mu_{n}^{m+1/2}$ and $\tau \mu_{p}^{m+1/2}$ respectively, we see that 
\begin{equation}\label{egy: n and p}
    \begin{aligned}
        & \langle n^{m+1}-n^{m}, \mu_{n}^{m+1/2} \rangle_C  
        +  \langle p^{m+1}-p^{m}, \mu_{p}^{m+1/2} \rangle_C  
        \\
        & - \tau \langle \mathcal{A}_{h} \tilde{n}^{m+1/2} \nabla_{h} \mu_{n}^{m+1/2} , \hat{\bm{u}}^{m+1/2} \rangle_C  
        - \tau \langle \mathcal{A}_{h} \tilde{p}^{m+1/2} \nabla_{h} \mu_{p}^{m+1/2} , \hat{\bm{u}}^{m+1/2} \rangle_C  
        \\
        & + \tau ( [\breve{n}^{m+1/2} \nabla_{h} \mu_{n}^{m+1/2}, \nabla_{h} \mu_{n}^{m+1/2} ]
        + [\breve{p}^{m+1/2} \nabla_{h} \mu_{p}^{m+1/2}, \nabla_{h} \mu_{p}^{m+1/2} ] ) = 0. 
    \end{aligned}
\end{equation}
On the other hand, the following equalities and inequalities are observed:
\begin{subequations}\label{egy: mu1}
    \begin{align}
        & \langle n^{m+1}-n^{m}, F_{n^{m}} (n^{m+1} ) \rangle_C  
        = \langle n^{m+1} \ln n^{m+1}, \mathrm{1} \rangle_C - \langle n^{m} \ln n^{m}, \mathrm{1} \rangle_C ,  
        \\
        & \langle p^{m+1}-p^{m}, F_{p^{m}} (p^{m+1}) \rangle_C  
        = \langle p^{m+1} \ln p^{m+1}, \mathrm{1} \rangle_C - \langle p^{m} \ln p^{m}, \mathrm{1} \rangle_C , 
        \\
        &  \langle n^{m+1}-n^{m}, (-\Delta_h )^{-1} (n^{m+1/2}-p^{m+1/2} ) \rangle_C  
        + \langle p^{m+1}-p^{m}, (-\Delta_h )^{-1} (p^{m+1/2}-n^{m+1/2} ) \rangle_C  
        \nonumber \\
        & = \frac{1}{2} ( \| n^{m+1}-p^{m+1} \|_{-1, h}^{2}- \|n^{m}-p^{m} \|_{-1, h}^{2} ),
    \end{align}
\end{subequations} 
\begin{equation}\label{egy: mu2} 
       \langle n^{m+1}-n^{m}, \ln n^{m+1}-\ln n^{m} \rangle_C  \ge 0,  \quad 
      \langle p^{m+1}-p^{m}, \ln p^{m+1}-\ln p^{m} \rangle_C \ge 0.
\end{equation}
Consequently, a substitution of \eqref{egy: mu1} and \eqref{egy: mu2} into \eqref{egy: n and p}, combined with \eqref{egy: u2}, leads to the following estimate: 
\begin{equation}
    \begin{aligned}
        & \langle n^{m+1} ( \ln n^{m+1} -1 ) , \mathrm{1} \rangle_C 
        - \langle n^{m} ( \ln n^{m} -1 ) , \mathrm{1} \rangle_C  
        + \langle p^{m+1} ( \ln p^{m+1} - 1) , \mathrm{1} \rangle_C 
\\
    & 
        - \langle p^{m} ( \ln p^{m} -1 ) , \mathrm{1} \rangle_C  
        + \frac{1}{2} ( \|n^{m+1}-p^{m+1} \|_{-1, h}^{2}- \|n^{m}-p^{m} \|_{-1, h}^{2} ) 
        + \frac{1}{2} ( \|\bm{u}^{m+1} \|_{2}^{2}- \|\bm{u}^{m} \|_{2}^{2} ) 
        \\
        & + \frac{\tau^{2}}{8} ( \|\nabla_{h} \psi^{m+1} \|_{2}^{2}- \|\nabla_{h} \psi^{m} \|_{2}^{2} )
        + \tau \|\nabla_{h} \hat{\bm{u}}^{m+1/2} \|_{2}^{2} 
        \\ 
        & + \tau ( [\breve{n}^{m+1/2} \nabla_{h} \mu_{n}^{m+1/2}, \nabla_{h} \mu_{n}^{m+1/2} ]
        + [\breve{p}^{m+1/2} \nabla_{h} \mu_{p}^{m+1/2}, \nabla_{h} \mu_{p}^{m+1/2} ] ) = 0 ,  
    \end{aligned}
\end{equation} 
in which the mass conservation identity~\eqref{mass conserv-1} has been used. The proof of Theorem \ref{thm: energy} is finished. 
\end{proof} 

\begin{remark} 
The modified discrete total energy functional, namely $\tilde{E}_{h} (n^{m+1}, p^{m+1}, \bm{u}^{m+1}, \psi^{m+1} )$  defined in~\eqref{mod energy-1}, is composed of the original version of the discrete total energy introduced in~\eqref{total energy-1} (evaluated at the time step $t^{m+1}$), combined with a numerical correction term $\frac{\tau^{2}}{8} \|\nabla_h \psi^{m+1} \|_2^2$. In fact, such a numerical correction term comes from the decoupled Stokes solver, and clearly it is of order $O (\tau^2)$.  Moreover, the discrete total energy functional~\eqref{total energy-1} turns out to be an $O (h^2)$ approximation to the continuous version of the total energy defined in~\eqref{energy dissipation-1}, since the discrete inner product has been proved to be an $O (h^2)$ approximation to its continuous version. A combination of these two arguments reveals that, the modified discrete total energy functional (given by~\eqref{mod energy-1}) is an $O (\tau^2 + h^2)$ approximation to the continuous version of the total energy defined in~\eqref{energy dissipation-1}.   

Meanwhile, since the finite difference algorithm computes the numerical solution only at the numerical grid points, the original version of the discrete total energy in~\eqref{total energy-1} would attract the most attentions in the numerical analysis. Theoretically speaking, the dissipation property of the modified discrete total energy functional, as proved in Theorem~\ref{thm: energy}, does not ensure the dissipation of the original version defined in~\eqref{total energy-1}. On the other hand, since the difference between the modified and the original versions is of $O (\tau^2)$, the dissipation of the original discrete total energy~\eqref{total energy-1} has been observed in all the numerical examples reported in this article, as will be demonstrated in the later section, while the theoretical analysis only ensures the dissipation of the modified discrete total energy. In addition, although the dissipation of the original discrete total energy is not theoretically available, we are able to derive its uniform bound in time: 
\begin{equation} 
\begin{aligned} 
   E_{h, total} (n^{m+1}, p^{m+1}, \bm{u}^{m+1} ) 
   \le & \tilde{E}_{h} (n^{m+1}, p^{m+1}, \bm{u}^{m+1}, \psi^{m+1} )  
   \le \tilde{E}_{h} (n^m, p^m, \bm{u}^m, \psi^m)  \le ... 
\\
  \le &  
   \tilde{E}_{h} (n^0, p^0, \bm{u}^0, \psi^0)  
  =   E_{h, total} (n^0, p^0, \bm{u}^0 )  + \frac{\tau^2}{8} \| \nabla_h \psi^0 \|_2^2 
  := \tilde{C}_0 , 
\end{aligned} 
  \label{mod energy-2} 
\end{equation} 
for any $m \ge 0$. Meanwhile, because of the mass conservation identity, $\overline{n} =\overline{p} = \beta_0$, we observe the following estimates:  
\begin{align} 
  &
  n \ln n \ge n - e - e^{-1} ,   \quad \mbox{so that} \, \, \, 
  \langle n \ln n , 1 \rangle_C  \ge  \| n \|_1 - ( e + e^{-1} ) | \Omega | ,    \label{mod energy-3-1} 
\\
  & 
  \langle p \ln p , 1 \rangle_C  \ge  \| p \|_1 - ( e + e^{-1} ) | \Omega | ,    \quad 
  \mbox{(similar argument)} , \label{mod energy-3-2} 
\\
  & 
  E_h ( n , p) = \langle n \ln n + p \ln p , 1 \rangle_C - | \Omega | ( \overline{n} + \overline{p} ) 
  = \langle n \ln n + p \ln p , 1 \rangle_C - 2 \beta_0 | \Omega |  \nonumber 
\\
  & \qquad \quad \, \, \, 
  \ge \| n \|_1 + \| p \|_1 - 2 ( e + e^{-1} + \beta_0 ) | \Omega |  . \label{mod energy-3-3} 
\end{align} 
In particular, inequality~\eqref{mod energy-3-1} comes from the fact that $x \ln x \ge - e^{-1} \ge x - e - e^{-1}$ for any $0 < x < e$, and $x \ln x \ge x$ for any $x \ge e$. In turn, a combination of \eqref{mod energy-2} and \eqref{mod energy-3-3} leads to   
\begin{equation} 
  E_h ( n^m , p^m ) + \frac12 \| \bm{u}^m \|_2^2 \le \tilde{C}_0 , \quad \mbox{so that} \, \, \, 
  \| n^m \|_1 + \| p^m \|_1 - 2 ( e + e^{-1} + \beta_0 ) | \Omega | + \frac12 \| \bm{u}^m \|_2^2 
  \le \tilde{C}_0 , \label{mod energy-4-1} 
\end{equation} 
for any $m \ge 1$, which in turn yields the following functional bounds for the numerical solution:  
\begin{equation} 
   \| n^m \|_1 , \, \, \| p^m \|_1 \le  \tilde{C}_0 + 2 ( e + e^{-1} + \beta_0 ) | \Omega | , \quad 
   \| \bm{u}^m \|_2 \le \Big( 2 \tilde{C}_0 + 4 ( e + e^{-1} + \beta_0 ) | \Omega |  \Big)^\frac12 , 
   \quad \forall m \ge 1 . 
   \label{mod energy-4-2} 
\end{equation} 

  In addition, it is observed that the uniform-in-time functional bounds~\eqref{mod energy-4-2} for the numerical solution, namely the discrete $\| \cdot \|_1$ norm for the ion concentration variables and the discrete $\| \cdot \|_2$ norm for the velocity variable, turn out to be very weak. These functional bounds are not sufficient in the optimal rate convergence analysis. To overcome this difficulty, a higher order consistency analysis via an asymptotic expansion is needed, and the inverse inequality has to be applied to obtain the $\| \cdot \|_\infty$ and the $W_h^\infty$ bounds of the numerical solution; see the details in the next section.  
\end{remark}

\section{Optimal rate convergence analysis}\label{sec: convergence}

In this section we present the convergence analysis. Denote $({\mathsf N}, {\mathsf P}, \Phi, \mathrm{\bf U}, \Psi)$ as the exact solution for the PNPNS system \eqref{eqn: main system}. With sufficiently regular initial data, the exact solution is assumed to be of the following regularity class:
\begin{equation}
    {\mathsf N} , {\mathsf P} , \mathrm{\bf U}, \Psi \in \mathcal{R}
    :=H^{6}\left(0, T ; C_{\mathrm{per}}(\Omega)\right) 
    \cap H^{5}\left(0, T ; C_{\mathrm{per}}^{2}(\Omega)\right) 
    \cap L^{\infty}\left(0, T ; C_{\mathrm{per}}^{6}(\Omega)\right). 
\end{equation} 
Moreover, a separation property is assumed for the exact ion concentration variables: 
\begin{equation}
    {\mathsf N} \geq \delta, \quad 
    {\mathsf P} \geq \delta, \quad  
    \mbox{for some $\delta >0$} , 
\end{equation}
at a point-wise level, for all $t \in[0, T]$. For the convenience of the $\| \cdot \|_{-1,h}$ error estimate, we introduce the Fourier projection of the exact solution into $\mathcal{B}^{K}$, the space of trigonometric polynomials of degree to $K(N=2 K+1)$:  $\mathsf{N}_{N}(\cdot, t):=\mathcal{P}_{N} \mathsf{N}(\cdot, t), \mathsf{P}_{N}(\cdot, t):=\mathcal{P}_{N} \mathsf{P}(\cdot, t)$. In fact, a standard projection estimate is available: 
\begin{equation} \label{eqn: projection estimate}
        \| {\mathsf N}_{N} - {\mathsf N} \|_{L^{\infty} (0, T ; H^{k} )}    
           \le Ch^{\ell-k} \| {\mathsf N} \|_{L^{\infty} (0, T ; H^{\ell} )} ,  \, \, 
        \| {\mathsf P}_{N} - {\mathsf P} \|_{L^{\infty} (0, T ; H^{k} )}    
           \le Ch^{\ell-k} \| {\mathsf P} \|_{L^{\infty} (0, T ; H^{\ell} )} , 
\end{equation} 
for any $\ell \in \mathbb{N}$ with $0 \leq k \leq \ell,(\mathrm{N}, \mathrm{P}) \in L^{\infty}
(0, T ; H_{\mathrm{per}}^{\ell}(\Omega) )$. In fact, the positivity of the ion concentration variables does not directly come from this Fourier projection estimate; on the other hand, a similar separation bound, ${\mathsf N}_{N} , \, {\mathsf P}_{N} \ge \frac{3 \delta}{4}$, could be derived by taking $h = \frac{L}{N}$ sufficiently small. To simplify the notation in the later analysis, we denote $\mathsf{N}_{N}^{m}=\mathsf{N}_{N}\left(\cdot, t_{m}\right), \mathsf{P}_{N}^{m}=\mathsf{P}_{N}\left(\cdot, t_{m}\right)$ (with $t_{m}=m \cdot \tau$), and notice the mass conservative property of the projection solution at the discrete level: 
\begin{equation} 
    \begin{aligned}
        & \overline{\mathsf{N}_{N}^{m}}
        =\frac{1}{|\Omega|} \int_{\Omega} \mathsf{N}_{N}\left(\cdot, t_{m}\right) \mathsf{d}\bm{x}
        =\frac{1}{|\Omega|} \int_{\Omega} \mathsf{N}_{N}\left(\cdot, t_{m-1}\right) \mathrm{d}\bm{x}
        =\overline{\mathsf{N}_{N}^{m-1}}, \quad \forall m \in \mathbb{N}, \\
        & \overline{\mathsf{P}_{N}^{m}}=\overline{\mathsf{P}_{N}^{m-1}}, \quad \forall m \in \mathbb{N}, 
        \quad \text { (similar argument), }
\end{aligned} 
 \label{mass conserv-2} 
\end{equation} 
which comes from the fact that $\left(\mathsf{N}_{N}, \mathsf{P}_{N}\right) \in \mathcal{B}^{K}$. On the other hand, the discrete mass conservation for the numerical solution \eqref{scheme: n} and \eqref{scheme: p} has been derived in~\eqref{mass conserv-1}. To facilitate the $\| \cdot \|_{-1, h}$ error analysis, the mass conservative projection is applied to the initial data:
\begin{equation}
    \begin{aligned}
        & (n^{0})_{i, j}
        =\mathcal{P}_{h} \mathsf{~N}_{N}(\cdot, t=0)
        :=\mathsf{N}_{N} (x_{i}, y_{j}, t=0 ) , \\
        & (p^{0} )_{i, j}
        =\mathcal{P}_{h} \mathsf{P}_{N}(\cdot, t=0)
        :=\mathsf{P}_{N} (x_{i}, y_{j}, t=0 ) .
\end{aligned}
\end{equation} 

In terms of the electric potential variable, we denote its Fourier projection as $\Phi_h = (-\Delta_h)^{-1} (\mathsf{P}_{N} - \mathsf{N}_{N})$, with a homogeneous Neumann boundary condition. Of course, a standard error estimate for the discrete Poisson equation indicates that $\| \Phi_h - \Phi \|_\infty \le C h^2$. Subsequently, the discrete error functions for the ion concentration and electric potential variables are introduced as
\begin{equation}\label{eqn: error grid definition}
    \mathring{n}^{m}:=\mathcal{P}_{h} \mathsf{N}_{N}^{m}-n^{m}, \quad 
    \mathring{p}^{m}:=\mathcal{P}_{h} \mathsf{P}_{N}^{m}-p^{m}, \quad 
    \mathring{\phi}^{m}:=\mathcal{P}_{h} \Phi_{N}^{m}-\phi^{m}, \quad 
    \forall m \in \mathbb{N} .
\end{equation} 
Because of the discrete mass conservation identities~\eqref{mass conserv-1}, \eqref{mass conserv-2}, it is clear that $\overline{\mathring{n}^{m}}=\overline{\mathring{p}^{m}}=0$, so that the discrete norm $\|\cdot\|_{-1, h}$ is well defined for $\mathring{n}^{m}$ and $\mathring{p}^{m}$, for any $m \in \mathbb{N}$. 

  In terms of the velocity and pressure variables, we just take the associated error functions as
\begin{equation}
    \mathring{\bm{u}}^{m}:=\mathcal{P}_{h} \mathrm{\bf U}^{m}-\bm{u}^{m}= ( \mathring{u}^m , \mathring{v}^m  )^T , \quad 
    \mathring{\psi}^{m}:=\mathcal{P}_{h} \Psi^{m}-\psi^{m}, 
    \quad \forall m \in \mathbb{N} . 
\end{equation} 

The following theorem is the main result of this section.

\begin{theorem}\label{thm: convergence}
    Given initial data $\mathsf{N}$, $\mathsf{P}$, $\Phi\left(\cdot, t=0\right)$, $\mathrm{\bf U}\left(\cdot, t=0\right) \in C_{\mathrm{per}}^{6}(\Omega)$, suppose the exact solution for PNPNS system \eqref{eqn: main system} is of regularity class $\mathcal{R}$. Then, provided $\tau$ and $h$ are sufficiently small, and under the linear refinement requirement $\lambda_1 h \leq \tau \leq \lambda_2 h$, we have
    \begin{equation}\label{eqn: convergence thm}
        \|\mathring{\bm{u}}^{m} \|_{2} +  \|\mathring{n}^{m} \|_{2} 
        + \|\mathring{p}^{m} \|_{2} 
        + \| \mathring{\phi}^{m} \|_{H_{h}^{2}} 
        +  \|\nabla_h \mathring{\psi}^{m} \|_2 
        \leq C ( \tau^2 +h^2 ), 
    \end{equation} 
for all positive integers $k$, such that $t_{k} = k \tau \leq T$, where $C>0$ is independent of $\tau$ and $h$.
\end{theorem}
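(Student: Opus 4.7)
The plan is to follow the higher-order asymptotic expansion combined with rough/refined error estimate strategy that the authors allude to and that appears in \cite{Liu2023PNP, WangchengCHNS2022, WangchengCHNS2024}. The need for this two-stage approach is dictated by two obstructions: the singular $\ln p$, $\ln n$ terms, which require a separation bound $p^m, n^m \ge \delta/4$ on the numerical solution before any Taylor-type estimate can be invoked; and the extremely weak $L^1$ bound \eqref{mod energy-4-2} supplied by the total-energy stability, which is insufficient to close a nonlinear estimate directly.

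First I would build auxiliary profiles
\begin{equation*}
\check{\mathsf N} = \mathsf N + \tau^2 \mathsf N_{\tau,1} + h^2 \mathsf N_{h,1}, \quad \check{\mathsf P} = \mathsf P + \tau^2 \mathsf P_{\tau,1} + h^2 \mathsf P_{h,1}, \quad \check{\mathbf U} = \mathbf U + \tau^2 \mathbf U_{\tau,1} + h^2 \mathbf U_{h,1}, \quad \check\Psi = \Psi + \tau^2 \Psi_{\tau,1} + h^2 \Psi_{h,1},
\end{equation*}
where the correction fields solve linearized PNPNS systems obtained by matching the leading truncation terms arising from the Crank--Nicolson / BDF-style temporal stencils and from the MAC finite-difference Fourier symbols. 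The point is that when \eqref{scheme: main} is applied to $(\check{\mathsf N},\check{\mathsf P},\check{\mathbf U},\check\Psi)$, the consistency residual drops to $O(\tau^3+h^3)$, one full order beyond the target rate, while $\check{\mathsf N},\check{\mathsf P}\ge \tfrac{3\delta}{4}$ remains valid. The error functions are then redefined against these profiles, $\mathring n^m := \mathcal P_h \check{\mathsf N}^m - n^m$, etc., preserving zero discrete mean for $\mathring n^m, \mathring p^m$.

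Next I would derive the coupled error system by subtracting \eqref{scheme: main} from the profile version. The key manipulations are: expanding the modified Crank--Nicolson log stencil via Lemma \ref{lem: F} to extract a nonnegative monotone contribution plus a Lipschitz remainder on the separated range; splitting the extrapolated mobilities $\breve n^{m+1/2},\breve p^{m+1/2}$ into a profile-mobility plus an error piece that is Lipschitz in $\tilde n^{m+1/2}-\mathcal A_x \check{\mathsf N}^{m+1/2}$; and decomposing the semi-implicit convection terms analogously. In the rough-estimate phase, I would test the concentration error equations against $(-\Delta_h)^{-1}\mathring n^{m+1/2}$ and $(-\Delta_h)^{-1}\mathring p^{m+1/2}$, and the velocity error equation against $\mathring{\hat{\mathbf u}}^{m+1/2}$, invoking the summation-by-parts identities in \eqref{lem: discrete inner}--\eqref{lem: discrete inner of g and afu} exactly as in the stability proof of Theorem \ref{thm: energy}. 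Using the monotonicity of $\ln$ and the $O(\tau^3+h^3)$ consistency, a discrete Gr\"onwall argument delivers a rough bound $\|\mathring n^m\|_{-1,h}+\|\mathring p^m\|_{-1,h}+\|\mathring{\mathbf u}^m\|_2 \le C(\tau^3+h^3)$. Under the linear refinement $\lambda_1 h\le \tau\le \lambda_2 h$, the inverse inequalities $\|\cdot\|_\infty \le C h^{-d/2}\|\cdot\|_2$ and $\|\cdot\|_2\le \breve C_0 h^{-1}\|\cdot\|_{-1,h}$ from Proposition \ref{prop:1} then give $\|\mathring n^m\|_\infty + \|\mathring p^m\|_\infty = O(h)$, which yields the crucial pointwise separation $n^m, p^m \ge \delta/4$.

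With separation in hand, the refined estimate proceeds by testing \eqref{scheme: n}--\eqref{scheme: p} error equations against $\tau\mathring\mu_n^{m+1/2}, \tau\mathring\mu_p^{m+1/2}$ and the velocity error equation against $\hat{\mathring{\mathbf u}}^{m+1/2}$; the logarithm is now Lipschitz on the admissible range, so the Crank--Nicolson piece $F_{n^m}(n^{m+1})-F_{\mathcal P_h\check{\mathsf N}^m}(\mathcal P_h\check{\mathsf N}^{m+1})$ contributes a cleanly controllable remainder. The pressure-gradient term is handled through the same identity \eqref{egy: p} that generated the correction $\tfrac{\tau^2}{8}\|\nabla_h\psi^{m+1}\|_2^2$ in Theorem \ref{thm: energy}, and the discrete Poincar\'e inequality converts the $\|\cdot\|_{-1,h}$ dissipation into $\|\cdot\|_2$. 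A final discrete Gr\"onwall gives $\|\mathring n^m\|_2+\|\mathring p^m\|_2+\|\mathring{\mathbf u}^m\|_2 \le C(\tau^2+h^2)$, from which the electric potential bound follows by elliptic regularity on $(-\Delta_h)^{-1}(\mathring p^m-\mathring n^m)$, and the pressure-gradient bound $\|\nabla_h\mathring\psi^m\|_2\le C(\tau^2+h^2)$ follows from the projection identity \eqref{scheme: u} combined with the velocity bound. The main obstacle is the simultaneous management of the singular-log monotonicity, the non-symmetric coupling to $\mathcal L_h^{NS}$ (requiring the invertibility analysis of Lemma \ref{lem: L_h^NP}), and the pressure increment at the half-step, which is precisely what forces the linear refinement $\tau\sim h$ in the bootstrap.
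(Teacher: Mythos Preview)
Your overall architecture---higher-order asymptotic expansion followed by a rough/refined error bootstrap---matches the paper's. But the rough-estimate step as you describe it will not close, and the obstruction is precisely the singular logarithm you flag.

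You propose to test the concentration error equations against $(-\Delta_h)^{-1}\mathring n^{m+1/2}$ and $(-\Delta_h)^{-1}\mathring p^{m+1/2}$. After summation by parts, the diffusion error $\nabla_h\cdot(\breve n^{m+1/2}\nabla_h e_{\mu_n}^{m+1/2})$ produces the cross term
\[
\bigl\langle \breve n^{m+1/2}\nabla_h e_{\mu_n}^{m+1/2},\,\nabla_h(-\Delta_h)^{-1}\mathring n^{m+1/2}\bigr\rangle,
\]
which is not sign-definite. Bounding it by Cauchy--Schwarz forces control of $\|\nabla_h e_{\mu_n}^{m+1/2}\|_2$; but $e_{\mu_n}^{m+1/2}$ contains $F_{n^m}(n^{m+1})$ and $\tau\ln n^{m+1}$, whose discrete gradients blow up without a separation bound on $n^{m+1}$---exactly what the rough phase is supposed to supply. ``Monotonicity of $\ln$'' does not rescue this: monotonicity gives a one-sided sign only when $\ln n^{m+1}-\ln\breve{\mathsf N}^{m+1}$ is paired against $n^{m+1}-\breve{\mathsf N}^{m+1}$, not against $\nabla_h(-\Delta_h)^{-1}\mathring n$. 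Your appeal to a ``Lipschitz remainder on the separated range'' in the rough phase is circular.

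The paper breaks the circularity differently. It tests the concentration error equations against $e_{\mu_n}^{m+1/2}$ and $e_{\mu_p}^{m+1/2}$ in \emph{both} the rough and the refined phases (see \eqref{estimate: en and ep}). The diffusion term then yields the positive dissipation $\langle\mathcal A_h\tilde n^{m+1/2}\nabla_h e_{\mu_n},\nabla_h e_{\mu_n}\rangle\ge\tfrac{\delta}{2}\|\nabla_h e_{\mu_n}\|_2^2$ directly, and the singular logarithm is confined to the temporal inner product $\tfrac{1}{\tau}\langle e_n^{m+1},e_{\mu_n}^{m+1/2}\rangle$. This is handled not by Lipschitz bounds but by a dedicated one-sided estimate, Lemma~\ref{lem: rough integral estimate}: using only monotonicity of $F_a$ and $\ln$ (and the a-priori induction hypothesis \eqref{prior assumption} at the \emph{previous} steps $m,m-1$), the lemma bounds this inner product from below by $\tfrac12 C^* K_n^* h^2-\tilde C_4\|e_n^m\|_2^2$, where $K_n^*$ counts grid points at which $n^{m+1}\ge 2C^*+1$. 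The resulting inequality forces $K_n^*=K_p^*=0$; then the improved bound \eqref{integral-rough-1} supplies $\tilde C_5\|e_n^{m+1}\|_2^2$ on the left, and the rough $\ell^2$ estimate follows. Separation at $t^{m+1}$ then comes from the inverse inequality, and the refined phase (now with Lipschitz logs, via Lemma~\ref{lem:priori refine error estimate}) closes by Gr\"onwall and recovers \eqref{prior assumption}.

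Two secondary differences worth noting. First, because testing against $e_{\mu_n}$ moves $\tfrac{1}{\tau}\langle e_n^m,e_{\mu_n}\rangle$ to the right side, the rough phase loses a power of $\tau$; the paper compensates by pushing consistency to $O(\tau^4+h^4)$ with an additional $\tau^3$ temporal correction in \eqref{eqn: construction}, one order beyond your $O(\tau^3+h^3)$. Second, the paper does not run a direct Gr\"onwall in the rough phase; it works step-by-step against an explicit a-priori assumption \eqref{prior assumption} at level $\tau^{15/4}$, which the refined estimate then recovers at $t^{m+1}$.
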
 

In the later analysis, $C$ represents a constant that may depend on $\Omega$ and $\delta$, but is independent on $h$ and $\tau$. 

\subsection{Higher order consistency analysis of the numerical system} 

Based on the detailed Taylor expansion analysis, s substitution of the projection solution $(\mathsf{N}_{N}, \mathsf{P}_{N})$ and the exact profiles $(\mathrm{\bf U}, \Psi)$ into the numerical scheme~\eqref{scheme: main} leads to a second order local truncation error, in both time and space. However, such a leading truncation error would not be sufficient to ensure an a-priori $W_{h}^{1, \infty}$ bound for the numerical solution, which is needed for the nonlinear error estimate. A higher order consistency analysis, accomplished by a perturbation expansion argument, is needed to remedy this effort. In more details, we have to construct a few supplementary functions, $\breve{\mathrm{\bf U}}$, $\breve{\Psi}$, $\breve{\mathsf N}$, $\breve{\mathsf P}$, 
with the following expansion: 
\begin{equation}\label{eqn: construction}
    \begin{aligned}
        & \breve{\mathrm{\bf U}}
        = \mathcal{P}_{H}\left(\mathrm{\bf U} 
        + \tau^{2} \mathrm{\bf U}_{\tau, 1} 
        + \tau^{3} \mathrm{\bf U}_{\tau, 2} 
        + h^{2} \mathrm{\bf U}_{h, 1}\right), \quad 
        \breve{\Psi} 
        = \mathcal{I}_{h}\left(\Psi 
        + \tau^{2} \Psi_{\tau, 1} 
        + \tau^{3} \Psi_{\tau, 2} 
        + h^{2} \Psi_{h, 1}\right), 
        \\
        & \breve{\mathsf{N}} 
        = \mathsf{N}_{N} 
        + \mathcal{P}_{N} (\tau^{2} \mathsf{N}_{\tau, 1} 
        + \tau^{3} \mathsf{N}_{\tau, 2} 
        + h^{2} \mathsf{N}_{h, 1} ), \quad 
        \breve{\mathsf{P}} 
        = \mathsf{P}_{N} 
        + \mathcal{P}_{N} (\tau^{2} \mathsf{P}_{\tau, 1} 
        + \tau^{3} \mathsf{P}_{\tau, 2} 
        + h^{2} \mathsf{P}_{h, 1} ) , 
    \end{aligned}
\end{equation}
in which $\mathcal{P}_{H}$ stands for a discrete Helmholtz interpolation (into the divergence-free space), 
and $\mathcal{I}_{h}$ is the standard point-wise interpolation. In turn, a substitution of these constructed function into the numerical scheme \eqref{scheme: main} gives a higher order $O(\tau^{4}+h^{4})$ consistency. The constructed functions, $\mathrm{\bf U}_{\tau, i}$, $\Psi_{\tau, i}$, $\mathsf{N}_{\tau, i}$, 
$\mathsf{P}_{\tau, i}$, 
$(i=1,2)$, $\mathrm{\bf U}_{h, 1}$, $\Psi_{h, 1}$, $\mathsf{N}_{h, 1}$, $\mathsf{P}_{h, 1}$, 
could be obtained by an asymptotic expansion technique, and they only depend on the exact solution $(\mathrm{\bf U}, \Psi, \mathsf{N}, \mathsf{P})$. 
In turn, the numerical error function between the constructed expansion profile and the numerical solution is analyzed, instead of a direct error analysis between the numerical and projection solutions. 

The following bilinear form is introduced to facilitate the nonlinear analysis:
\begin{equation}
    b(\bm{u}, \bm{v}) = \bm{u} \cdot \nabla \bm{v}, \quad 
    b_{h}(\bm{u}, \bm{v}) 
    = \frac{1}{2}\left(\bm{u} \cdot \nabla_{h} \bm{v} 
    + \nabla \cdot\left(\bm{u v}^{T}\right)\right). 
\end{equation} 
Moreover, the following intermediate velocity vector is needed in the leading order consistency analysis:
\begin{equation}
    \hat{\mathrm{\bf U}}^{m+1} = \mathrm{\bf U}^{m+1} + \frac{1}{2} \tau \nabla\left(\Psi^{m+1}-\Psi^{m}\right). 
\end{equation} 
Subsequently, a careful Taylor expansion (in time) for $(\mathsf{N}_N , \mathsf{P}_N , \mathrm{\bf U})$ and $\hat{\mathrm{\bf U}}$ implies that
\begin{subequations}\label{taylor:1}
    \begin{align}
        \frac{\hat{\mathrm{\bf U}}^{m+1}-\mathrm{\bf U}^{m}}{\tau}
        & +  b (\tilde{\mathrm{\bf U}}^{m+1/2}, \hat{\mathrm{\bf U}}^{m+1/2} )
        + \nabla \Psi^{m} 
        - \Delta \hat{\mathrm{\bf U}}^{m+1/2} 
        \nonumber \\
        = & - \tilde{\mathsf N}_{N}^{m+1/2} \nabla \mathrm{M}_{n}^{m+1/2}
        - \tilde{\mathsf P}_{N}^{m+1/2} \nabla \mathrm{M}_{p}^{m+1/2} 
         + \tau^2 \bm{\mathrm{G}}_{0}^{m+1/2}+O (\tau^{3}+h^{m_{0}} ), 
        \\
        \frac{\mathsf{N}_{N}^{m+1}-\mathsf{N}_{N}^{m}}{\tau} 
        & +  \nabla \cdot (\tilde{\mathsf{N}}_{N}^{m+1/2} \hat{\mathrm{\bf U}}^{m+1/2} ) 
        \nonumber \\
         = &  \nabla \cdot ( \tilde{\mathsf{N}}_{N}^{m+1/2} \nabla \mathrm{M}_{n}^{m+1/2} ) 
        + \tau^{2} H_{n,0}^{m+1/2} 
        + O (\tau^{3}+h^{m_{0}} ), 
        \\
        \mathrm{M}_{n}^{m+1/2} 
          = & 
        F_{\mathsf{N}_N^m} ( \mathsf{N}_{N}^{m+1} ) - 1 
        + \tau ( \ln\mathsf{N}_{N}^{m+1} - \ln \mathsf{N}_{N}^{m} ) 
         + ( - \Delta )^{-1} (\mathsf{N}_{N}^{m+1/2} - \mathsf{P}_{N}^{m+1/2} ),  
        \\
        \frac{\mathsf{P}_{N}^{m+1}-\mathsf{P}_{N}^{m}}{\tau} 
        & +  \nabla \cdot (\tilde{\mathsf{P}}_{N}^{m+1/2} \hat{\mathrm{\bf U}}^{m+1/2} ) 
        \nonumber \\
          = & \nabla \cdot ( \tilde{\mathsf{P}}_{N}^{m+1/2} \nabla \mathrm{M}_{p}^{m+1/2} ) 
        + \tau^{2} H_{p,0}^{m+1/2} + O (\tau^{3} + h^{m_{0}} ), 
        \\
        \mathrm{M}_{p}^{m+1/2} 
          = & F_{\mathsf{P}_N^m} ( \mathsf{P}_{N}^{m+1} ) - 1 
           + \tau ( \ln\mathsf{P}_{N}^{m+1} - \ln \mathsf{P}_{N}^{m} ) 
          + ( - \Delta )^{-1} (\mathsf{P}_{N}^{m+1/2} - \mathsf{N}_{N}^{m+1/2} ), 
        \\
        \frac{\mathrm{\bf U}^{m+1}-\hat{\mathrm{\bf U}}^{m+1}}{\tau} 
        & +  \frac{1}{2}\nabla (\Psi^{m+1}-\Psi^{m} ) = 0, \\
        \nabla \cdot \mathrm{\bf U}^{m+1}  = & 0 , 
    \end{align}
\end{subequations} 
in which $\|\bm{\mathrm{G}}_{0}^{m+1/2} \|, \| H_{0}^{m+1/2} \| \leq C$, and $C$ depends only on the regularity of the exact solutions.

The correction functions $\mathrm{\bf U}_{\tau, 1}$, $\Psi_{\tau, 1}$, $\mathsf{P}_{\tau, 1}$, $\mathsf{N}_{\tau, 1}$, $\mathrm{M}_{n, \tau, 1}$ and $\mathrm{M}_{p, \tau, 1}$, are constructed as the solution of the following PDE system
\begin{subequations}
    \begin{align}
        \partial_{t} \mathrm{\bf U}_{\tau, 1} 
        & +  \left( \mathrm{\bf U}_{\tau, 1} \cdot \nabla \right) \mathrm{\bf U} 
        + (\mathrm{\bf U} \cdot \nabla) \mathrm{\bf U}_{\tau, 1} 
        + \nabla \Psi_{\tau, 1}  - \Delta \mathrm{\bf U}_{\tau, 1} 
        \nonumber \\
         = & - \mathsf{N}_{\tau, 1} \nabla \mathrm{M}_{n} 
        - \mathsf{N}_{N} \nabla \mathrm{M}_{n, \tau, 1} 
        - \mathsf{P}_{\tau, 1} \nabla \mathrm{M}_{p} 
        - \mathsf{P}_{N} \nabla \mathrm{M}_{p, \tau, 1} 
        - \mathrm{G}_{0}, 
        \\
        \partial_{t} \mathsf{N}_{\tau, 1} 
        & +  \nabla \cdot ( \mathsf{N}_{\tau, 1} \mathrm{\bf U} 
        + \mathsf{N}_{N} \mathrm{\bf U}_{\tau, 1} ) 
        = \nabla \cdot\left(\mathsf{N}_{\tau, 1} \nabla \mathrm{M}_{n} 
        + \mathsf{N}_{N} \nabla \mathrm{M}_{n,\tau,1} \right)
        - H_{n,0}, 
        \\ 
        \mathrm{M}_{n} 
          = & \ln \mathsf{N}_{N} 
        + ( -\Delta )^{-1} (\mathsf{N}_{N} - \mathsf{P}_{N} ), 
        \\
        \mathrm{M}_{n,\tau,1} 
         = & \frac{1}{\mathsf{N}_{N}} \mathsf{N}_{\tau, 1} 
        + ( -\Delta )^{-1} (\mathsf{N}_{\tau, 1}-\mathsf{P}_{\tau, 1} ), 
        \\
        \partial_{t} \mathsf{P}_{\tau, 1} 
        & +  \nabla \cdot ( \mathsf{P}_{\tau, 1} \mathrm{\bf U} 
        + \mathsf{P}_{N} \mathrm{\bf U}_{\tau, 1} ) 
        = \nabla \cdot (\mathsf{P}_{\tau, 1} \nabla \mathrm{M}_{p} 
        + \mathsf{P}_{N} \nabla \mathrm{M}_{n,\tau,1} )
        - H_{p,0}, 
        \\ 
        \mathrm{M}_{p} 
          = & \ln \mathsf{P}_{N} 
        + ( -\Delta )^{-1} (\mathsf{P}_{N} - \mathsf{P}_{N} ), 
        \\
        \mathrm{M}_{p,\tau,1} 
          = & \frac{1}{\mathsf{P}_{N}} \mathsf{P}_{\tau, 1} 
        + ( -\Delta )^{-1} (\mathsf{P}_{\tau, 1}-\mathsf{N}_{\tau, 1} ), 
        \\ 
        \nabla \cdot \mathrm{\bf U}_{\tau, 1} = & 0. 
    \end{align}
\end{subequations} 
The homogeneous Neumann boundary condition for $\mathsf{N}_{\tau, 1}$ and $\mathsf{P}_{\tau, 1}$, combined with the no-penetration, free-slip boundary condition for $\mathrm{\bf U}_{\tau, 1}$, are imposed.  Existence and uniqueness of a solution of the above linear and parabolic PDE system is straightforward. Of course, a similar intermediate velocity vector could be introduced as
\begin{equation}
    \hat{\mathrm{\bf U}}_{\tau, 1}^{m+1} 
    = \mathrm{\bf U}_{\tau, 1}^{m+1} 
    + \frac{1}{2} \tau \nabla (\Psi_{\tau, 1}^{m+1} - \Psi_{\tau, 1}^{m} ). 
\end{equation} 
An application of a temporal discretization to the above linear PDE system for $\mathrm{\bf U}_{\tau, 1}, \Psi_{\tau, 1}$, $\mathsf{N}_{\tau, 1}, \mathsf{P}_{\tau, 1}$, $\mathrm{M}_{n,\tau,1}$, $\mathrm{M}_{p,\tau,1}$ and $\hat{\mathrm{\bf U}}_{\tau, 1}$ gives 
\begin{subequations}\label{taylor:2}
    \begin{align}
        \frac{\hat{\mathrm{\bf U}}_{\tau, 1}^{m+1}-\mathrm{\bf U}_{\tau, 1}^{m}}{\tau} 
        & + b (\tilde{\mathrm{\bf U}}_{\tau, 1}^{m+1/2}, \hat{\mathrm{\bf U}}^{m+1/2} )
        + b (\tilde{\mathrm{\bf U}}^{m+1/2}, \hat{\mathrm{\bf U}}_{\tau, 1}^{m+1/2} ) 
        + \nabla \Psi_{\tau, 1}^{m} 
        - \Delta \mathrm{\bf U}_{\tau, 1}^{m+1/2} 
        \nonumber \\
          = & - \tilde{\mathsf{N}}_{\tau, 1}^{m+1/2} \nabla \mathrm{M}_{n}^{m+1/2} 
        - \tilde{\mathsf{N}}_{N}^{m+1/2} \nabla \mathrm{M}_{n,\tau, 1}^{m+1/2} 
        \nonumber \\
         & - \tilde{\mathsf{P}}_{\tau, 1}^{m+1/2} \nabla \mathrm{M}_{p}^{m+1/2} 
        - \tilde{\mathsf{P}}_{N}^{m+1/2} \nabla \mathrm{M}_{p,\tau, 1}^{m+1/2} 
        - \bm{\mathrm{G}}_{0}^{m+1/2} + O (\tau^2 ) , 
        \\
        \frac{\mathsf{N}_{\tau, 1}^{m+1}-\mathsf{N}_{\tau, 1}^{m}}{\tau} 
        & +  \nabla \cdot (\tilde{\mathsf{N}}_{\tau, 1}^{m+1/2} \hat{\mathrm{\bf U}}^{m+1/2}
        + \tilde{\mathsf{N}}_{N}^{m+1/2} \hat{\mathrm{\bf U}}_{\tau, 1}^{m+1/2} ) 
        \nonumber \\ 
         = & \nabla \cdot ( \tilde{\mathsf{N}}_{\tau, 1}^{m+1/2} \nabla \mathrm{M}_{n}^{m+1/2} 
        + \tilde{\mathsf{N}}_{N}^{m+1/2} \nabla \mathrm{M}_{n, \tau, 1}^{m+1/2} ) 
        - H_{n,0}^{m+1/2} +  O (\tau^2 ), 
        \\
        \mathrm{M}_{n,\tau, 1}^{m+1/2} 
         = & \frac{1}{\mathsf{N}_{N}^{m+1/2}} \mathsf{N}_{\tau,1}^{m+1/2}
        + ( -\Delta )^{-1} ( \mathsf{N}^{m+1/2} - \mathsf{P}^{m+1/2} ), 
        \label{derivation: M_n}
        \\
        \frac{\mathsf{P}_{\tau, 1}^{m+1}-\mathsf{P}_{\tau, 1}^{m}}{\tau} 
        & +   \nabla \cdot (\tilde{\mathsf{P}}_{\tau, 1}^{m+1/2} \hat{\mathrm{\bf U}}^{m+1/2}
        + \tilde{\mathsf{P}}_{N}^{m+1/2} \hat{\mathrm{\bf U}}_{\tau, 1}^{m+1/2} ) 
        \nonumber \\ 
         = & \nabla \cdot ( \tilde{\mathsf{P}}_{\tau, 1}^{m+1/2} \nabla \mathrm{M}_{p}^{m+1/2} 
        + \tilde{\mathsf{P}}_{N}^{m+1/2} \nabla \mathrm{M}_{p, \tau, 1}^{m+1/2} ) 
        - H_{p,0}^{m+1/2} + O (\tau^2 ), 
        \\
        \mathrm{M}_{p,\tau, 1}^{m+1/2} 
         = & \frac{1}{\mathsf{P}_{N}^{m+1/2}} \mathsf{P}_{\tau,1}^{m+1/2}
        + ( -\Delta )^{-1} ( \mathsf{P}^{m+1/2} - \mathsf{N}^{m+1/2} ), 
        \label{derivation: M_p}
        \\
        \frac{\mathrm{\bf U}_{\tau, 1}^{m+1}-\hat{\mathrm{\bf U}}_{\tau, 1}^{m+1}}{\tau} 
        & +   \frac{1}{2} \nabla (\Psi_{\tau, 1}^{m+1}-\Psi_{\tau, 1}^{m} ) = 0, 
        \\
        \nabla \cdot \mathrm{\bf U}_{\tau, 1}^{m+1} = & 0. 
    \end{align}
\end{subequations} 
A combination of \eqref{taylor:1} and \eqref{taylor:2} results in the following third order truncation error for 
$\mathrm{\bf U}_{1}:=\mathrm{\bf U}+\tau^{2} \mathrm{\bf U}_{\tau, 1}$, 
$\mathsf{N}_{1}:=\mathsf{N}_{N}+\tau^{2} \mathcal{P}_{N} \mathsf{N}_{\tau, 1}$, 
$\mathsf{P}_{1}:=\mathsf{P}_{N}+\tau^{2} \mathcal{P}_{N} \mathsf{P}_{\tau, 1}$, 
$\Psi_{1}:=\Psi+\tau^{2} \Psi_{\tau, 1}$: 
\begin{subequations}\label{expan:1}
    \begin{align}
        \frac{\hat{\mathrm{\bf U}}_{1}^{m+1} - \mathrm{\bf U}_{1}^{m}}{\tau} 
        & +  b (\tilde{\mathrm{\bf U}}_{1}^{m+1/2}, \hat{\mathrm{\bf U}}_{1}^{m+1/2} ) 
        + \nabla \Psi_{1}^{m} 
        - \Delta \hat{\mathrm{\bf U}}_{1}^{m+1/2} 
        \nonumber \\
         = & - \tilde{\mathsf{N}}_{1}^{m+1/2} \nabla M_{n,1}^{m+1/2} 
        - \tilde{\mathsf{P}}_{1}^{m+1/2} \nabla M_{p,1}^{m+1/2} 
        + \tau^3 \mathrm{\bm{G}}_{1}^{m+1/2} 
        + O (\tau^4+h^{m_{0}} ), 
        \\
        \frac{\mathsf{N}_{1}^{m+1} - \mathsf{N}_{1}^{m}}{\tau} 
        & +  \nabla \cdot (\tilde{\mathsf{N}}_{1}^{m+1/2} \hat{\mathrm{\bf U}}_{1}^{m+1/2} ) 
        \nonumber \\ 
        = & \nabla \cdot ( \tilde{\mathsf N}_{1}^{m+1/2} \nabla M_{n,1}^{m+1/2} ) 
        + \tau^3 H_{n,1}^{m+1/2} + O (\tau^4 + h^{m_{0}} ), 
        \\
         \mathrm{M}_{n, 1}^{m+1/2} 
          = & F_{\mathsf{N}_1^m} ( \mathsf{N}_1^{m+1} ) - 1 
        + \tau ( \ln\mathsf{N}_1^{m+1} - \ln \mathsf{N}_1^{m} ) 
        + ( - \Delta )^{-1} (\mathsf{N}_1^{m+1/2} - \mathsf{P}_1^{m+1/2} ),  
        \\ 
        \frac{\mathsf{P}_{1}^{m+1} - \mathsf{P}_{1}^{m}}{\tau} 
        & +  \nabla \cdot (\tilde{\mathsf{P}}_{1}^{m+1/2} \hat{\mathrm{\bf U}}_{1}^{m+1/2} ) 
        \nonumber \\ 
        = & \nabla \cdot ( \tilde{\mathsf P}_{1}^{m+1/2} \nabla M_{p,1}^{m+1/2} ) 
        + \tau^{3} H_{p,1}^{m+1/2} + O (\tau^4 + h^{m_{0}} ), 
        \\
        \mathrm{M}_{p, 1}^{m+1/2} 
          = & F_{\mathsf{P}_1^m} ( \mathsf{P}_1^{m+1} ) - 1 
        + \tau ( \ln\mathsf{P}_1^{m+1} - \ln \mathsf{P}_1^{m} ) 
        + ( - \Delta )^{-1} (\mathsf{P}_1^{m+1/2} - \mathsf{N}_1^{m+1/2} ), 
        \\
        \frac{\mathrm{\bf U}_{1}^{m+1} - \hat{\mathrm{\bf U}}_{1}^{m+1}}{\tau} 
        & +  \frac{1}{2} \nabla (\Psi_{1}^{m+1} - \Psi_{1}^{m} )=0, 
        \\
        \nabla \cdot \mathrm{\bf U}_{1}^{m+1}  = & 0, 
    \end{align}
\end{subequations} 
where $\|\mathrm{G}_{1} \|$, $\|H_{1} \| \leq C$, and $C$ depends only on the regularity of the exact solutions. In fact, the following linearized expansions have been applied in the above derivation: 
\begin{equation}
    \frac{1}{\mathsf{N}_{N}^{m+1/2}} \mathsf{N}_{\tau, 1}^{m+1/2} 
    = \frac{1}{2} \Big(\frac{1}{\mathsf{N}_{N}^{m}} \mathsf{N}_{\tau, 1}^{m} 
    + \frac{1}{\mathsf{N}_{N}^{m+1}} \mathsf{N}_{\tau, 1}^{m+1} \Big) + O (\tau^{2} ). 
\end{equation} 

Similarly, the next order temporal correction functions, namely $\mathrm{\bf U}_{\tau, 2}$, $\Psi_{\tau, 2}$, $\mathsf{P}_{\tau, 2}$ and $\mathsf{N}_{\tau, 2}$, are given by following linear equations: 
\begin{subequations}\label{taylor:3}
    \begin{align}
        \partial_{t} \mathrm{\bf U}_{\tau, 2} 
        & +  ( \mathrm{\bf U}_{\tau, 2} \cdot \nabla ) \mathrm{\bf U}_{1} 
        + (\mathrm{\bf U}_{1}  \cdot \nabla ) \mathrm{\bf U}_{\tau, 2} 
        + \nabla \Psi_{\tau, 2} - \Delta \mathrm{\bf U}_{\tau, 2} 
        \nonumber \\
        = & - \mathsf{N}_{\tau, 2} \nabla \mathrm{M}_{n , 1} 
        - \mathsf{N}_{1} \nabla \mathrm{M}_{n, \tau, 2} 
        - \mathsf{P}_{\tau, 2} \nabla \mathrm{M}_{p , 1} 
        - \mathsf{P}_{1} \nabla \mathrm{M}_{p, \tau, 2} 
        - \mathrm{G}_{1}, 
        \\
        \partial_{t} \mathsf{N}_{\tau, 2} 
        & +  \nabla \cdot ( \mathsf{N}_{\tau, 2} \mathrm{\bf U}_{1} 
        + \mathsf{N}_{1} \mathrm{\bf U}_{\tau, 2} ) 
        = \nabla \cdot (\mathsf{N}_{\tau, 2} \nabla \mathrm{M}_{n , 1} 
        + \mathsf{N}_{1} \nabla \mathrm{M}_{n,\tau, 2} )
        - H_{n,1}, 
        \\ 
        \mathrm{M}_{n , 1} 
        = & \ln \mathsf{N}_{1} 
        + ( -\Delta )^{-1} (\mathsf{N}_{1} - \mathsf{P}_{1} ), 
        \\
        \mathrm{M}_{n, \tau, 2} 
        = & \frac{1}{\mathsf{N}_{N}} \mathsf{N}_{\tau, 2} 
        + ( -\Delta )^{-1} (\mathsf{N}_{\tau, 2}-\mathsf{P}_{\tau, 2} ), 
        \\
        \partial_{t} \mathsf{P}_{\tau, 2} 
        & +  \nabla \cdot ( \mathsf{P}_{\tau, 2} \mathrm{\bf U}_{1} 
        + \mathsf{P}_{1} \mathrm{\bf U}_{\tau, 2} ) 
        = \nabla \cdot (\mathsf{P}_{\tau, 2} \nabla \mathrm{M}_{p , 1} 
        + \mathsf{P}_{1} \nabla \mathrm{M}_{p, \tau, 2} ) - H_{p,1}, 
        \\ 
        \mathrm{M}_{p , 1} 
        = & \ln \mathsf{P}_{1} 
        + ( -\Delta )^{-1} (\mathsf{P}_{1} - \mathsf{N}_{1} ), 
        \\
        \mathrm{M}_{p, \tau, 2} 
        = & \frac{1}{\mathsf{P}_{1}} \mathsf{P}_{\tau, 2} 
        + ( -\Delta )^{-1} (\mathsf{P}_{\tau, 2}-\mathsf{N}_{\tau, 2} ), 
        \\ 
        \nabla \cdot \mathrm{\bf U}_{\tau, 2} = & 0.
    \end{align}
\end{subequations} 
Again, the homogeneous Neumann boundary condition is imposed for $\mathsf{N}_{\tau, 2}$ and $\mathsf{P}_{\tau, 2}$, combined with the no-penetration, free-slip boundary condition for $\mathrm{\bf U}_{\tau, 2}$. 
Meanwhile, a similar intermediate velocity vector is introduced as
\begin{equation}
    \hat{\mathrm{\bf U}}_{\tau, 2}^{m+1} 
    = \mathrm{\bf U}_{\tau, 2}^{m+1} 
    + \frac{1}{2} \tau \nabla (\Psi_{\tau, 2}^{m+1} - \Psi_{\tau, 2}^{m} ). 
\end{equation} 
In turn, an application of a temporal discretization to the above linear PDE system for $\mathrm{\bf U}_{\tau, 1}$, $\Psi_{\tau, 1}$, $\mathsf{N}_{\tau, 1}$ and $\mathsf{P}_{\tau, 1}$ reveals that
\begin{subequations}\label{taylor:2-n}
    \begin{align}
        \frac{\hat{\mathrm{\bf U}}_{\tau, 2}^{m+1}-\mathrm{\bf U}_{\tau, 2}^{m}}{\tau} 
        & +  b (\tilde{\mathrm{\bf U}}_{\tau, 2}^{m+1/2}, \hat{\mathrm{\bf U}}_{1}^{m+1/2} )
        + b (\tilde{\mathrm{\bf U}}_{1}^{m+1/2}, \hat{\mathrm{\bf U}}_{\tau, 2}^{m+1/2} ) 
        + \nabla \Psi_{\tau, 2}^{m}- \Delta \hat{\mathrm{\bf U}}_{\tau, 2}^{m+1/2} 
        \nonumber \\
        = & - \tilde{\mathsf{N}}_{\tau, 2}^{m+1/2} \nabla \mathrm{M}_{n , 1}^{m+1/2} 
        - \tilde{\mathsf{N}}_{1}^{m+1/2} \nabla \mathrm{M}_{n,\tau, 2}^{m+1/2} 
        \nonumber \\
        & - \tilde{\mathsf{P}}_{\tau, 2}^{m+1/2} \nabla \mathrm{M}_{p , 1}^{m+1/2} 
        - \tilde{\mathsf{P}}_{1}^{m+1/2} \nabla \mathrm{M}_{p,\tau, 2}^{m+1/2} 
        - \bm{\mathrm{G}}_{1}^{m+1/2} + O (\tau^2 ), 
        \\
        \frac{\mathsf{N}_{\tau, 2}^{m+1}-\mathsf{N}_{\tau, 2}^{m}}{\tau} 
        & +  \nabla \cdot (\tilde{\mathsf{N}}_{\tau, 2}^{m+1/2} \hat{\mathrm{\bf U}}_{1}^{m+1/2}
        + \tilde{\mathsf{N}}_{1}^{m+1/2} \hat{\mathrm{\bf U}}_{\tau, 2}^{m+1/2} ) 
        \nonumber \\ 
         = & \nabla \cdot ( \tilde{\mathsf{N}}_{\tau, 2}^{m+1/2} \nabla \mathrm{M}_{n , 1}^{m+1/2} 
        + \tilde{\mathsf{N}}_{1}^{m+1/2} \nabla \mathrm{M}_{n, \tau, 2}^{m+1/2} ) 
        - H_{n,1}^{m+1/2} + O (\tau^2 ), 
        \\
        \mathrm{M}_{n , 1}^{m+1/2} 
        = & F_{\mathsf{N}_{1}^{m}} ( \mathsf{N}_{1}^{m+1} ) 
        + ( -\Delta )^{-1} ( \mathsf{N}_{1}^{m+1/2} - \mathsf{P}_{1}^{m+1/2} ) 
        + \tau (\ln \mathsf{N}_{1}^{m+1} - \ln \mathsf{N}_{1}^{m} ), 
        \\
        \mathrm{M}_{n , \tau , 2}^{m+1/2} 
        = & \frac{1}{\mathsf{N}_{1}^{m+1/2}} \mathsf{N}_{\tau , 2}^{m+1/2}
        + ( -\Delta )^{-1} ( \mathsf{N}_{\tau , 2}^{m+1/2} - \mathsf{P}_{\tau , 2}^{m+1/2} ), 
        \label{derivation: M_n2}
        \\
        \frac{\mathsf{P}_{\tau, 2}^{m+1}-\mathsf{P}_{\tau, 2}^{m}}{\tau} 
        + & \nabla \cdot (\tilde{\mathsf{P}}_{\tau, 2}^{m+1/2} \hat{\mathrm{\bf U}}_{1}^{m+1/2}
        + \tilde{\mathsf{P}}_{1}^{m+1/2} \hat{\mathrm{\bf U}}_{\tau, 2}^{m+1/2} ) 
        \nonumber \\ 
        = & \nabla \cdot ( \tilde{\mathsf{P}}_{\tau, 2}^{m+1/2} \nabla \mathrm{M}_{p , 1}^{m+1/2} 
        + \tilde{\mathsf{P}}_{1}^{m+1/2} \nabla \mathrm{M}_{p, \tau, 2}^{m+1/2} ) 
        - H_{p,1}^{m+1/2} + O (\tau^2 ), 
        \\
        \mathrm{M}_{p , 1}^{m+1/2} 
        = & F_{\mathsf{P}_{1}^{m}} ( \mathsf{P}_{1}^{m+1} ) 
        + ( -\Delta )^{-1} ( \mathsf{P}_{1}^{m+1/2} - \mathsf{N}_{1}^{m+1/2} ) 
        + \tau (\ln \mathsf{P}_{1}^{m+1} - \ln \mathsf{P}_{1}^{m} ), 
        \\
        \mathrm{M}_{p,\tau, 2}^{m+1/2} 
        = & \frac{1}{\mathsf{P}_{1}^{m+1/2}} \mathsf{P}_{\tau , 2}^{m+1/2}
        + ( -\Delta )^{-1} ( \mathsf{P}_{\tau , 2}^{m+1/2} - \mathsf{N}_{\tau , 2}^{m+1/2} ), 
        \label{derivation: M_p2}
        \\
        \frac{\mathrm{\bf U}_{\tau, 2}^{m+1}-\hat{\mathrm{\bf U}}_{\tau, 2}^{m+1}}{\tau} 
        & +  \frac{1}{2} \nabla (\Psi_{\tau, 2}^{m+1} - \Psi_{\tau, 2}^{m} ) = 0, 
        \\
        \nabla \cdot \mathrm{\bf U}_{\tau, 2}^{m+1} = & 0. 
    \end{align}
\end{subequations} 
A combination of \eqref{expan:1} and \eqref{taylor:2-n} yields the following fourth order truncation error for 
$\mathrm{\bf U}_{2}:=\mathrm{\bf U}_{1}+\tau^3 \mathrm{\bf U}_{\tau, 2}$, 
$\mathsf{N}_{2}:=\mathsf{N}_{1}+\tau^3 \mathcal{P}_{N} \mathsf{N}_{\tau, 2}$, 
$\mathsf{P}_{2}:=\mathsf{P}_{1}+\tau^3 \mathcal{P}_{N} \mathsf{P}_{\tau, 2}$ and 
$\Psi_{2}:=\Psi_{1}+\tau^3 \Psi_{\tau, 2}$: 
\begin{subequations}\label{expan:2}
    \begin{align}
        \frac{\hat{\mathrm{\bf U}}_{2}^{m+1} - \mathrm{\bf U}_{2}^{m}}{\tau} 
        & +  b (\tilde{\mathrm{\bf U}}_{2}^{m+1/2}, \hat{\mathrm{\bf U}}_{2}^{m+1/2} ) 
        + \nabla \Psi_{2}^{m} 
        - \Delta \hat{\mathrm{\bf U}}_{2}^{m+1/2} 
        \nonumber \\
        = & - \tilde{\mathsf{N}}_{2}^{m+1/2} \nabla M_{n,2}^{m+1/2} 
        - \tilde{\mathsf{P}}_{2}^{m+1/2} \nabla M_{p,2}^{m+1/2} 
        + O (\tau^4 +h^{m_{0}} ), 
        \\
        \frac{\mathsf{N}_{2}^{m+1} - \mathsf{N}_{2}^{m}}{\tau} 
        & + \nabla \cdot (\tilde{\mathsf{N}}_{2}^{m+1/2} \hat{\mathrm{\bf U}}_{2}^{m+1/2} ) 
        = \nabla \cdot ( \tilde{\mathsf N}_{2}^{m+1/2} \nabla M_{n,2}^{m+1/2} ) 
        + O (\tau^4 + h^{m_{0}} ), 
        \\
        M_{n , 2}^{m+1/2} 
        = & F_{\mathsf{N}_{2}^{m}} (\mathsf{N}_{2}^{m+1} ) 
        + \tau (\ln \mathsf{N}_{2}^{m+1} - \ln \mathsf{N}_{2}^{m} ) 
        + (-\Delta )^{-1} (\mathsf{N}_{2}^{m+1/2}-\mathsf{P}_{2}^{m+1/2} ) , 
        \\ 
        \frac{\mathsf{P}_{2}^{m+1} - \mathsf{P}_{2}^{m}}{\tau} 
        & + \nabla \cdot (\tilde{\mathsf{P}}_{2}^{m+1/2} \hat{\mathrm{\bf U}}_{2}^{m+1/2} ) 
        =  \nabla \cdot ( \tilde{\mathsf P}_{2}^{m+1/2} \nabla M_{p,2}^{m+1/2} ) 
        + O (\tau^{4} + h^{m_{0}} ), 
        \\
        M_{p , 2}^{m+1/2} 
        = & F_{\mathsf{P}_{2}^{m}} (\mathsf{P}_{2}^{m+1} ) 
        + \tau (\ln \mathsf{P}_{2}^{m+1} - \ln \mathsf{P}_{2}^{m} ) 
        + (-\Delta )^{-1} (\mathsf{P}_{2}^{m+1/2}-\mathsf{N}_{2}^{m+1/2} ) , 
        \\ 
        \frac{\mathrm{\bf U}_{2}^{m+1} - \hat{\mathrm{\bf U}}_{2}^{m+1}}{\tau} 
        & +  \frac{1}{2} \nabla (\Psi_{2}^{m+1} - \Psi_{2}^m )=0, 
        \\
        \nabla \cdot \mathrm{\bf U}_{2}^{m+1} = & 0, 
    \end{align} 
\end{subequations} 
with $\|\bm{\mathrm{G}}_{2}\|$, $\|H_{n,2} \|$, $\|H_{p,2} \| \le C$, and $C$ dependent only on the regularity of the exact solution. 

Next, we have to construct the spatial correction function to improve the spatial accuracy order. In terms of the spatial discretization, the key challenge is associated with the fact that the velocity vector $\mathrm{\bf U}_{2}$ is not divergence-free at a discrete level, so that its discrete inner product with the pressure gradient may not vanish. To overcome this difficulty, we make use of a spatial interpolation operator $\mathcal{P}_{H}$. For any $\bm{u} \in H^{1}\left(\Omega\right), \nabla \cdot \bm{u}=0$, there is an exact stream function $\psi$ so that $\bm{u}=\nabla^{\perp} \psi$. Subsequently, we define the following discrete velocity vector: 
\begin{equation}
    \mathcal{P}_{H}(\bm{u})=\nabla_{h}^{\perp} \psi=\left(-D_{y} \psi, D_{x} \psi\right)^{T}. 
\end{equation}
As a result, this definition ensures $\nabla_{h} \cdot \mathcal{P}_{H}\left(\bm{u}\right)=0$ at a point-wise level. Moreover, an $O (h^2)$ truncation error is available between the continuous velocity vector $\bm{u}$ 
and its Helmholtz interpolation, $\mathcal{P}_{H}(\bm{u})$.

Subsequently, we denote $\mathrm{\bf U}_{2, PH}=\mathcal{P}_{H} (\mathrm{\bf U}_{2} )$. An application of the finite difference spatial approximation over the MAC mesh grid indicates the following truncation error estimate: 
\begin{subequations}\label{spatial taylor:1}
    \begin{align}
        \frac{\hat{\mathrm{\bf U}}_{2, PH}^{m+1}- \mathrm{\bf U}_{2, PH}^{m}}{\tau} 
        & +  b_{h} (\tilde{\mathrm{\bf U}}_{2,PH}^{m+1/2}, \hat{\mathrm{\bf U}}_{2,PH}^{m+1/2} ) 
        + \nabla_{h} \Psi_{2}^{m}- \Delta_h \hat{\mathrm{\bf U}}_{2, PH}^{m+1/2} 
        \nonumber \\ 
        = & - \mathcal{A}_{h} \tilde{\mathsf{N}}_{2}^{m+1/2} \nabla_{h} \mathrm{M}_{n, 2, h}^{m+1/2} 
        - \mathcal{A}_{h} \tilde{\mathsf{P}}_{2}^{m+1/2} \nabla_{h} \mathrm{M}_{p, 2, h}^{m+1/2} 
        \nonumber \\
        & + h^2 \bm{\mathrm{G}}_{h}^{m+1/2}+O (\tau^4 +h^4 ), 
        \\
        \frac{\mathsf{N}_{2}^{m+1}-\mathsf{N}_{2}^{m}}{\tau} 
        & +  \nabla_h \cdot (\mathcal{A}_{h} \tilde{\mathsf{N}}_{2}^{m+1/2} \hat{\mathrm{\bf U}}_{2 , PH}^{m+1/2} ) 
        \nonumber \\
        = & \nabla_{h} \cdot ( \mathcal{A}_{h} \tilde{\mathsf{N}}_{2}^{m+1/2} \nabla_{h} \mathrm{M}_{n , 2 , h}^{m+1/2} ) + h^{2} H_{n , h}^{m+1/2} + O (\tau^{4} + h^{4} ), 
        \\
        M_{n , 2 , h}^{m+1/2} 
         = & F_{\mathsf{N}_{2}^{m}} (\mathsf{N}_{2}^{m+1} ) 
        + \tau (\ln \mathsf{N}_{2}^{m+1} - \ln \mathsf{N}_{2}^{m} ) 
        + (-\Delta_h )^{-1} (\mathsf{N}_{2}^{m+1/2}-\mathsf{P}_{2}^{m+1/2} ), 
        \\
        \frac{\mathsf{P}_{2}^{m+1}-\mathsf{P}_{2}^{m}}{\tau} 
        & +  \nabla_{h} \cdot (\mathcal{A}_{h} \tilde{\mathsf{P}}_{2}^{m+1/2} \hat{\mathrm{\bf U}}_{2 , PH}^{m+1/2} ) 
        \nonumber \\
         = & \nabla_{h} \cdot ( \mathcal{A}_{h} \tilde{\mathsf{P}}_{2}^{m+1/2} \nabla_{h} \mathrm{M}_{p , 2 , h}^{m+1/2} ) + h^{2} H_{p , h}^{m+1/2} 
        + O (\tau^{4} + h^{4} ), 
        \\
        \mathrm{M}_{p , 2 , h}^{m+1/2} 
        = & F_{\mathsf{P}_{2}^{m}} (\mathsf{P}_{2}^{m+1} ) 
        + \tau (\ln \mathsf{P}_{2}^{m+1} - \ln \mathsf{P}_{2}^{m} ) 
        + (-\Delta_h )^{-1} (\mathsf{P}_{2}^{m+1/2}-\mathsf{N}_{2}^{m+1/2} ), 
        \\
        \frac{\mathrm{\bf U}_{2 , PH}^{m+1} - \hat{\mathrm{\bf U}}_{2, PH}^{m+1}}{\tau} 
        & +  \frac{1}{2} \nabla_{h} (\Psi_{2}^{m+1}-\Psi_{2}^{m} ) = 0, 
        \\
        \nabla_{h} \cdot \mathrm{\bf U}_{2, PH}^{m+1} = & 0 , 
    \end{align}  
\end{subequations} 
with $\|\bm{\mathrm{G}}_{h} \|_2$, $\|H_{n , h} \|_2$, $\|H_{p , h} \|_2 \le C$, and $C$ dependent only on the regularity of the exact solution. Subsequently, the spatial correction functions, $\mathrm{\bf U}_{h, 1}$, $\mathsf{N}_{h , 1}$, $\mathsf{P}_{h , 1}$ and $\Psi_{h , 1}$, are determined by the following linear PDE system
\begin{subequations}
    \begin{align}
        \partial_{t} \mathrm{\bf U}_{h, 1} 
        & +  (\mathrm{\bf U}_{h, 1} \cdot \nabla ) \mathrm{\bf U}_{2} 
        + (\mathrm{\bf U}_{2} \cdot \nabla ) \mathrm{\bf U}_{h, 1} 
        + \nabla \Psi_{h, 1} - \Delta \mathrm{\bf U}_{h , 1} 
        \nonumber \\ 
         = & - \mathsf{N}_{h, 1} \nabla \mathrm{M}_{n , 2} 
        - \mathsf{N}_{2} \nabla \mathrm{M}_{n , h, 1} 
        - \mathsf{P}_{h, 1} \nabla \mathrm{M}_{p , 2} 
        - \mathsf{P}_{2} \nabla \mathrm{M}_{p , h, 1} 
        - \bm{\mathrm{G}}_{h},  \\
        \partial_{t} \mathsf{N}_{h , 1} 
        & +  \nabla \cdot (\mathsf{N}_{h, 1} \mathrm{\bf U}_{2} + \mathsf{N}_{2} \mathrm{\bf U}_{h, 1} ) 
        = \nabla \cdot ( \mathsf{N}_{h , 1} \nabla \mathrm{M}_{n , 2} + \mathsf{N}_{2} \nabla \mathrm{M}_{n , h , 1} ) - H_{n , h}, 
        \\ 
        \mathrm{M}_{n , 2} 
         = & \ln \mathsf{N}_{2} + ( - \Delta )^{-1} ( \mathsf{N}_{2} - \mathsf{P}_{2} ), 
        \\ 
        \mathrm{M}_{n , h , 1} 
         = & \frac{1}{\mathsf{N}_{2}} \mathsf{N}_{n , h , 1} 
        + ( - \Delta )^{-1} ( \mathsf{N}_{h, 1} - \mathsf{P}_{h , 1} ), 
        \\
        \partial_{t} \mathsf{P}_{h , 1} 
        & +  \nabla \cdot (\mathsf{P}_{h, 1} \mathrm{\bf U}_{2} + \mathsf{P}_{2} \mathrm{\bf U}_{h, 1} ) 
        = \nabla \cdot ( \mathsf{P}_{h , 1} \nabla \mathrm{M}_{p , 2} + \mathsf{P}_{2} \nabla \mathrm{M}_{p , h , 1} ) - H_{p , h}, 
        \\ 
        \mathrm{M}_{p , 2} 
        = & \ln \mathsf{P}_{2} + ( - \Delta )^{-1} ( \mathsf{P}_{2} - \mathsf{N}_{2} ), 
        \\ 
        \mathrm{M}_{p , h , 1} 
        = & \frac{1}{\mathsf{P}_{2}} \mathsf{N}_{p , h , 1} 
        + ( - \Delta )^{-1} ( \mathsf{P}_{h, 1} - \mathsf{N}_{h , 1} ), 
        \\
        \nabla \cdot \mathrm{\bf U}_{h, 1}  = & 0. 
    \end{align}
\end{subequations} 
Similarly, the homogeneous Neumann boundary condition is imposed for $\mathsf{N}_{h, 1}$ and $\mathsf{P}_{h, 1}$, combined with the no-penetration, free-slip boundary condition for $\mathrm{\bf U}_{h, 1}$. Afterwards, we denote $\mathrm{\bf U}_{h, 1, PH}=\mathcal{P}_{H} (\mathrm{\bf U}_{h, 1})$, and $\hat{\mathrm{\bf U}}_{h, 1, PH}^{m+1}=\mathrm{\bf U}_{h, 1, PH}^{m+1} + \frac{1}{2} \tau \nabla_{h} (\Psi_{h, 1}^{m+1}-\Psi_{h, 1}^{m} )$. In turn, an application of both the temporal and spatial approximations to the above PDE system indicates that 
\begin{subequations}\label{spatial taylor:2}
    \begin{align}
      & 
        \frac{\hat{\mathrm{\bf U}}_{h, 1, PH}^{m+1}-\mathrm{\bf U}_{h, 1, PH}^{m}}{\tau} 
        +  b_{h} (\tilde{\mathrm{\bf U}}_{h, 1, PH}^{m+1/2}, \mathrm{\bf U}_{2 , PH}^{m+1/2} ) 
        + b_{h} (\tilde{\mathrm{\bf U}}_{2 , PH}^{m+1/2}, \mathrm{\bf U}_{h, 1, PH}^{m+1/2} )  
         + \nabla_{h} \Psi_{h, 1}^{m} 
        -  \Delta_h \mathrm{\bf U}_{h, 1, PH}^{m+1/2}  \nonumber 
        \\
         = & - \mathcal{A}_{h} \tilde{\mathsf{N}}_{h, 1}^{m+1/2} \nabla_{h}\mathrm{M}_{n , 2 , h}^{m+1/2} 
        - \mathcal{A}_{h} \tilde{\mathsf{N}}_{2}^{m+1/2} \nabla_{h} \mathrm{M}_{n , h, 1}^{m+1/2} 
        \nonumber \\ 
        & - \mathcal{A}_{h} \tilde{\mathsf{P}}_{h, 1}^{m+1/2} \nabla_{h}\mathrm{M}_{p , 2 , h}^{m+1/2} 
        - \mathcal{A}_{h} \tilde{\mathsf{P}}_{2}^{m+1/2} \nabla_{h} \mathrm{M}_{p , h , 1}^{m+1/2} 
         - \bm{\mathrm{G}}_{h}^{m+1/2} 
        + O (\tau^2 + h^2 ), 
        \\
        & 
        \frac{\mathsf{N}_{h, 1}^{m+1} - \mathsf{N}_{h, 1}^{m}}{\tau} 
        +  \nabla_{h} \cdot (\mathcal{A}_{h} \tilde{\mathsf{N}}_{h , 1}^{m+1/2} \mathrm{\bf U}_{2 , PH}^{m+1/2} 
        + \mathcal{A}_{h} \tilde{\mathsf{N}}_{2}^{m+1/2} \mathrm{\bf U}_{h , 1, PH}^{m+1/2} ) 
        \nonumber \\ 
        = & \nabla_{h} \cdot (\mathcal{A}_{h} \tilde{\mathsf{N}}_{h, 1}^{m+1/2} \nabla_{h} \mathrm{M}_{n , 2}^{m+1/2} 
        + \mathcal{A}_{h} \tilde{\mathsf{N}}_{2}^{m+1/2} \nabla_{h} \mathrm{M}_{n , h , 1}^{m+1/2} ) 
        - H_{n , h}^{m+1/2} + O (\tau^{2} + h^{2} ), 
        \\
        & 
        \mathrm{M}_{n , 2, h}^{m+1/2} 
        = F_{\mathsf{N}_{2}^{m}} (\mathsf{N}_{2}^{m+1} ) 
        + (-\Delta_h )^{-1} (\mathsf{N}_{2}^{m+1/2}-\mathsf{P}_{2}^{m+1/2} ), 
        \\
        & 
        \mathrm{M}_{n , h , 1}^{m+1/2} 
        = \frac{1}{\mathsf{N}_{2}^{m+1/2}} \mathsf{N}_{h, 1}^{m+1/2} 
        + (-\Delta_h )^{-1} (\mathsf{N}_{h, 1}^{m+1/2}-\mathsf{P}_{h, 1}^{m+1/2} ), 
        \\ 
         & 
        \frac{\mathsf{P}_{h, 1}^{m+1} - \mathsf{P}_{h, 1}^{m}}{\tau} 
        +  \nabla_{h} \cdot (\mathcal{A}_{h} \tilde{\mathsf{P}}_{h , 1}^{m+1/2} \mathrm{\bf U}_{2 , PH}^{m+1/2} 
        + \mathcal{A}_{h} \tilde{\mathsf{P}}_{2}^{m+1/2} \mathrm{\bf U}_{h , 1, PH}^{m+1/2} ) 
        \nonumber \\ 
         = & \nabla_{h} \cdot (\mathcal{A}_{h} \tilde{\mathsf{P}}_{h, 1}^{m+1/2} \nabla_{h} \mathrm{M}_{p , 2, h}^{m+1/2} 
        + \mathcal{A}_{h} \tilde{\mathsf{P}}_{2}^{m+1/2} \nabla_{h} \mathrm{M}_{p , h , 1}^{m+1/2} ) 
        - H_{p , h}^{m+1/2} + O (\tau^{2} + h^{2} ), 
        \\
        & 
        \mathrm{M}_{p , 2, h}^{m+1/2} 
        = F_{\mathsf{P}_{2}^{m}} (\mathsf{P}_{2}^{m+1} ) 
        + (-\Delta_h )^{-1} (\mathsf{P}_{2}^{m+1/2}-\mathsf{N}_{2}^{m+1/2} ), 
        \\
        & 
        \mathrm{M}_{p , h , 1}^{m+1/2} 
        = \frac{1}{\mathsf{P}_{2}^{m+1/2}} \mathsf{P}_{h, 1}^{m+1/2} 
        + (-\Delta_h )^{-1} (\mathsf{P}_{h, 1}^{m+1/2}-\mathsf{N}_{h, 1}^{m+1/2} ), 
        \\ 
        & 
        \nabla_{h} \cdot \mathrm{\bf U}_{h, 1, PH}^{m+1} = 0.
    \end{align}
\end{subequations} 
Finally, a combination of \eqref{spatial taylor:1} and \eqref{spatial taylor:2} yields an $O(\tau^{4}+h^{4})$ local truncation error for $\breve{\bf U}$, $\breve{\mathsf N}$, $\breve{\mathsf P}$ and $\breve{\Psi}$:  
\begin{subequations}\label{consistency}
    \begin{align}
        \frac{\hat{\breve{\mathrm{\bf U}}}^{m+1} - \breve{\mathrm{\bf U}}^{m}}{\tau}
        & +  b_{h} (\tilde{\mathrm{\bf U}}^{m+1/2},\hat{\breve{\mathrm{\bf U}}}^{m+1/2} ) 
        + \nabla_{h} \breve{\Psi}^{m} - \Delta_h \hat{\breve{\mathrm{\bf U}}}^{m+1/2} 
        \nonumber \\ 
        = & - \mathcal{A}_{h} \tilde{\breve{\mathsf{N}}}^{m+1/2} \nabla_{h} \breve{\mathrm{M}}_{n}^{m+1/2} 
        - \mathcal{A}_{h} \tilde{\breve{\mathsf{P}}}^{m+1/2} \nabla_{h} \breve{\mathrm{M}}_{p}^{m+1/2} 
        + \bm{\zeta}_u^m , 
        \\
        \frac{\breve{\mathsf{N}}^{m+1}-\breve{\mathsf{N}}^{m}}{\tau} 
        & +  \nabla_{h} \cdot (\mathcal{A}_{h} \tilde{\mathsf{N}}^{m+1/2} \hat{\breve{\mathrm{\bf U}}}^{m+1/2} ) 
        = \nabla_{h} \cdot (\mathcal{A}_{h} \tilde{\mathsf{N}}^{m+1/2} \nabla_{h} \breve{\mathrm{M}}_{n}^{m+1/2} )  + \zeta_{n}^m ,  
        \label{consistency:N} \\
        \breve{\mathrm{M}}_{n}^{m+1/2} 
        = & F_{\breve{\mathsf{N}}^{m}} (\breve{\mathsf{N}}^{m+1} ) 
        + (-\Delta_h )^{-1} (\breve{\mathsf{N}}^{m+1/2}-\breve{\mathsf{P}}^{m+1/2} ) 
        + \tau (\ln \breve{\mathsf{N}}^{m+1} - \ln \breve{\mathsf{N}}^{m} ), 
        \\ 
        \frac{\breve{\mathsf{P}}^{m+1}-\breve{\mathsf{P}}^{m}}{\tau} 
        & + \nabla_{h} \cdot (\mathcal{A}_{h} \tilde{\mathsf{P}}^{m+1/2} \hat{\breve{\mathrm{\bf U}}}^{m+1/2} ) 
        = \nabla_{h} \cdot (\mathcal{A}_{h} \tilde{\mathsf{P}}^{m+1/2} \nabla_{h} \breve{\mathrm{M}}_{p}^{m+1/2} ) + \zeta_{p}^m ,  
        \label{consistency:P} \\
        \breve{\mathrm{M}}_{p}^{m+1/2} 
        = & F_{\breve{\mathsf{P}}^{m}} (\breve{\mathsf{P}}^{m+1} ) 
        + (-\Delta_h )^{-1} (\breve{\mathsf{P}}^{m+1/2}-\breve{\mathsf{N}}^{m+1/2} ) 
        + \tau (\ln \breve{\mathsf{P}}^{m+1} - \ln \breve{\mathsf{P}}^{m} ), 
        \\ 
        \frac{\breve{\mathrm{\bf U}}^{m+1} - \hat{\breve{\mathrm{\bf U}}}^{m+1}}{\tau} 
        & +  \frac{1}{2} \nabla_{h} (\breve{\Psi}^{m+1}-\breve{\Psi}^{m} ) = 0, \\
        \nabla_{h} \cdot \breve{\mathrm{\bf U}}^{m+1} = & 0 , 
    \end{align}
\end{subequations} 
with $\| \bm{\zeta}_u^m \|_2$, $\| \zeta_n^m \|_2$, $\| \zeta_p^m \|_2 \le C (\tau^4 + h^4)$. 

A few more highlight explanations are provided for this higher order consistency analysis. 
\begin{enumerate} 
  \item In terms of the ion concentration variables, the following mass conservative identities and zero-average property for the local truncation error are available: 
\begin{equation} \label{mass conserv-3} 
    \begin{aligned}
        & n^{0} \equiv \breve{\mathrm{N}}^{0}, \quad p^{0} \equiv \breve{\mathrm{P}}^{0}, \quad 
        \overline{n^{k}}=\overline{n^{0}}, \quad \overline{p^{k}}=\overline{p^{0}}, \quad 
        \forall k \geq 0, 
        \\
        & \overline{\breve{\mathsf{N}}^{k}} 
        = \frac{1}{|\Omega|} \int_{\Omega} \breve{\mathsf{N}}\left(\cdot, t_{k}\right) \mathrm{d}\bm{x} 
        = \frac{1}{|\Omega|} \int_{\Omega} \breve{\mathsf{N}}^{0} \mathrm{d}\bm{x} 
        = \overline{n^{0}}, \quad \overline{\breve{\mathsf{P}}^{k}}=\overline{p^{0}}, \quad 
        \forall k \geq 0, 
        \\
        & \overline{\zeta_{n}^{m}}=\overline{\zeta_{p}^{m}}=0, \quad \forall m \ge 0. 
\end{aligned}
\end{equation} 
\item A similar phase separation estimate could be derived for the constructed $(\breve{\mathsf{N}}, \breve{\mathsf{P}})$, by taking $\tau$ and $h$ sufficiently small:  
\begin{equation}\label{eqn: separation}
    \breve{\mathsf{N}} \geq \frac{5 \delta}{8} , \quad 
    \breve{\mathsf{P}} \geq \frac{5 \delta}{8} ,  \quad 
    \mbox{for $\delta > 0$, at a point-wise level}.  
\end{equation} 
\item A discrete $W_h^{1, \infty}$ bound for the constructed profile $(\breve{\mathrm{\bf U}}, \breve{\mathsf{N}}, \breve{\mathsf{P}})$, as well as its discrete temporal derivative, is available:
  \begin{equation}\label{estimate: W}
      \begin{aligned}
        & \|\breve{\mathsf{N}}^{k} \|_{\infty} \leq C^*, \quad 
         \|\breve{\mathsf{P}}^{k} \|_{\infty} \leq C^*, \quad 
         \|\breve{\mathrm{\bf U}}^{k} \|_{\infty} \leq C^*, \quad 
        \|\hat{\breve{\mathrm{\bf U}}}^{m+1/2} \|_{\infty} \leq C^{*}, \quad 
        \forall k \geq 0, \\
        &  \|\nabla_{h} \breve{\mathsf{N}}^{k} \|_{\infty} \leq C^*, \quad 
         \|\nabla_{h} \breve{\mathsf{P}}^{k} \|_{\infty} \leq C^*, \quad 
         \|\nabla_{h} \breve{\mathrm{\bf U}}^{k} \|_{\infty} \leq C^*, \quad 
        \forall k \geq 0, \\
        &  \|\breve{\mathsf{N}}^{k+1}-\breve{\mathsf{N}}^{k} \|_{\infty} \leq C^* \tau, 
        \quad \|\breve{\mathsf{P}}^{k+1}-\breve{\mathsf{P}}^{k} \|_{\infty} \leq C^* \tau, 
        \quad \forall k \geq 0. 
    \end{aligned} 
\end{equation}
Furthermore, by the fact that $\breve{M}_{n}^{m+\frac{1}{2}}$ and $\breve{M}_{p}^{m+\frac{1}{2}}$ only depend on the exact solution $(\mathsf{N}, \mathsf{P})$, respectively, combined with a few correction functions, it is natural to assume a discrete $W_{h}^{1, \infty}$ bound
\begin{equation}\label{estimate: M}
    \|\nabla_{h} \breve{M}_{n}^{m+1/2} \|_{\infty} \leq C^{*}, \quad 
    \|\nabla_{h} \breve{M}_{p}^{m+1/2} \|_{\infty} \leq C^{*}. 
\end{equation} 
\end{enumerate}
\begin{remark}
    Based on the phase separation estimate \eqref{eqn: separation} for the constructed functions $(\breve{\mathsf{N}}, \breve{\mathsf{P}})$, as well as their regularity in time, it is clear that an explicit extrapolation formula in the mobility function in \eqref{consistency:N} and \eqref{consistency:P} has to create a point-wise positive mobility concentration value, a numerical approximation at time instant $t^{m+1/2}$. Therefore, the positive regularization formula in \eqref{eqn:interpolation}) could be avoided in the consistency analysis.
\end{remark}

\subsection{A rough error estimate} 
The following error functions are defined:
\begin{equation}\label{def: error function}
    \begin{array}{lll}
    e_{\bm{u}}^{m} = \breve{\mathrm{\bf U}}^{m}-\bm{u}^{m}, 
    & e_{\bm{u}}^{m+1/2} = \breve{\mathrm{\bf U}}^{m+1/2} - \bm{u}^{m+1/2}, 
    & \tilde{e}_{\bm{u}}^{m+1/2} = \tilde{\mathrm{\bf U}}^{m+1/2}-\tilde{\bm{u}}^{m+1/2}, 
    \\
    \hat{e}_{\bm{u}}^{m+1} = \hat{\breve{\mathrm{\bf U}}}^{m+1}-\hat{\bm{u}}^{m+1}, 
    & \hat{e}_{\bm{u}}^{m+1/2} = \frac{1}{2} (e_{\bm{u}}^{m} + \hat{e}_{\bm{u}}^{m+1} ), 
    & e_{\mu_{n}}^{m+1/2} = \breve{\mathrm{M}}_{n}^{m+1/2}-\mu_{n}^{m+1/2}, 
    \\
    e_{\psi}^{m} = \breve{\Psi}^{m}-\psi^{m}, 
    & e_{\psi}^{m+1/2} = \breve{\Psi}^{m+1/2} - \psi^{m+1/2}, 
    & e_{\mu_{p}}^{m+1/2} = \breve{\mathrm{M}}_{p}^{m+1/2}-\mu_{p}^{m+1/2}, 
    \\
    e_{n}^{m} = \breve{\mathsf{N}}^{m} - n^{m}, 
    & e_{n}^{m+1/2} = \frac{1}{2} ( e_{n}^{m+1} + e_{n}^{m} ), 
    & \tilde{e}_{n}^{m+1/2} = \frac{3}{2} {e}_{n}^{m} - \frac{1}{2} {e}_{n}^{m-1}, 
    \\
    e_{p}^{m} = \breve{\mathsf{P}}^{m} - p^{m}, 
    & e_{p}^{m+1/2} = \frac{1}{2} ( e_{p}^{m+1} + e_{p}^{m} ), 
    & \tilde{e}_{p}^{m+1/2} = \frac{3}{2} {e}_{p}^{m} - \frac{1}{2} {e}_{p}^{m-1}, 
    \\
    e_{\phi}^{m} = ( - \Delta_h )^{-1} ( e_{p}^{m} - e_{n}^{m} ), 
    & e_{\phi}^{m+1/2} = \frac{1}{2} ( e_{\phi}^{m+1} + e_{\phi}^{m} ), 
    & \tilde{e}_{\phi}^{m+1/2} = \frac{3}{2} {e}_{\phi}^{m} - \frac{1}{2} {e}_{\phi}^{m-1}. 
\end{array}
\end{equation} 
Because of the mass conservation identity~\eqref{mass conserv-3}, it is clear that the error functions $e_{n}^{k}$ and $e_{p}^{k}$ are always average-free: $\overline{e_{n}^{k}}=0$ and $\overline{e_{p}^{k}}=0$, for any $k \geq 0$. In turn, their $\|\cdot\|_{-1, h}$ norms are well defined. 
\begin{lemma} \label{estimate:B and Bnorm}
A trilinear form is introduced as 
$\mathcal{B}\left(\bm{u}, \bm{v}, \bm{w}\right) = \left\langle b_{h}\left(\bm{u}, \bm{v}\right), \bm{w}\right\rangle_{1}$. 
The following estimates are valid:
\begin{subequations}
    \begin{align}
        & \mathcal{B}\left(\bm{u}, \bm{v}, \bm{v}\right)=0, 
        \label{estimate:B} \\ 
        & \left|\mathcal{B}\left(\bm{u}, \bm{v}, \bm{w}\right)\right| 
        \leq \frac{1}{2}\left\|\bm{u}\right\|_{2}\left(\left\|\nabla_{h} \bm{v}\right\|_{\infty} \cdot\|\bm{w}\|_{2} 
        + \left\|\nabla_{h} \bm{w}\right\|_{2} \cdot\left\|\bm{v}\right\|_{\infty}\right). 
        \label{estimate:Bnorm}
    \end{align}
\end{subequations}
\end{lemma}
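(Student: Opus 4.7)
The plan is to handle the two estimates in turn, exploiting the skew-symmetrized structure built into $b_h$. By definition, $\mathcal{B}(\bm{u},\bm{v},\bm{w}) = \tfrac{1}{2}\langle \bm{u}\cdot\nabla_h \bm{v}, \bm{w}\rangle_1 + \tfrac{1}{2}\langle \nabla_h\cdot(\bm{u}\bm{v}^T), \bm{w}\rangle_1$, so the two claims reduce to estimates on these two constituent pairings.

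For the vanishing identity \eqref{estimate:B}, I would simply specialize $\bm{w}=\bm{v}$ and invoke the discrete summation-by-parts identity \eqref{lem: discrete inner} already established in the preceding lemma. Since $\bm{u}$ satisfies either a periodic or the no-penetration boundary condition $\bm{u}\cdot\bm{n}|_\Gamma=0$ used throughout the paper, the boundary contributions vanish and the two halves cancel exactly, leaving $\mathcal{B}(\bm{u},\bm{v},\bm{v})=0$.

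For the bound \eqref{estimate:Bnorm}, I would treat the two pieces of $\mathcal{B}$ separately. The first piece $\tfrac{1}{2}\langle \bm{u}\cdot\nabla_h \bm{v}, \bm{w}\rangle_1$ is controlled by two applications of the discrete H\"older inequality, which give $\tfrac{1}{2}\|\bm{u}\|_2\|\nabla_h\bm{v}\|_\infty\|\bm{w}\|_2$. For the second piece $\tfrac{1}{2}\langle \nabla_h\cdot(\bm{u}\bm{v}^T), \bm{w}\rangle_1$, I would first apply discrete summation by parts to move $\nabla_h\cdot$ onto $\bm{w}$, producing a term of the form $-\tfrac{1}{2}\langle \bm{u}\bm{v}^T, \nabla_h\bm{w}\rangle$ in the appropriate MAC-grid pairing; once the derivative lands on $\bm{w}$, H\"older yields the bound $\tfrac{1}{2}\|\bm{u}\|_2\|\bm{v}\|_\infty\|\nabla_h\bm{w}\|_2$. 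Adding the two pieces produces exactly \eqref{estimate:Bnorm}.

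The main technical annoyance, rather than a real obstacle, is the MAC-grid bookkeeping: because $u^x, u^y, v^x, v^y, w^x, w^y$ live at three different stencil positions, every product and every discrete divergence inside $b_h$ silently carries averaging operators $\mathcal{A}_{xy}, \mathcal{A}_x, \mathcal{A}_y$. These operators are self-adjoint in the relevant cell-centered/edge inner products and satisfy $\|\mathcal{A}f\|_p \le \|f\|_p$ for every $1\le p\le \infty$, so they neither introduce new boundary residuals nor inflate the constants; one only needs to track where each factor is evaluated in the summation-by-parts step. This is the same bookkeeping already used in establishing \eqref{lem: discrete inner}--\eqref{lem: discrete inner of g and afu}, so no new machinery is required.
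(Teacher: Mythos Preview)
Your proposal is correct and matches the paper's proof essentially line for line: the identity \eqref{estimate:B} is obtained by specializing $\bm{w}=\bm{v}$ and invoking \eqref{lem: discrete inner}, and the bound \eqref{estimate:Bnorm} comes from integrating the divergence term by parts (the paper writes the result as $-\langle \bm{u}\cdot\nabla_h\bm{w},\bm{v}\rangle_1$, which is your $-\langle \bm{u}\bm{v}^T,\nabla_h\bm{w}\rangle$) and then applying H\"older to each of the two pieces. Your explicit remark on the MAC averaging operators being contractive in $\ell^p$ is a welcome clarification that the paper leaves implicit.
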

\begin{proof}
Identity \eqref{estimate:B} comes from the summation by parts formula
\begin{equation}
    \mathcal{B}\left(\bm{u}, \bm{v}, \bm{v}\right) 
    = \left\langle b_{h}\left(\bm{u}, \bm{v}\right), \bm{v}\right\rangle_{1} 
    = \frac{1}{2}\left(\left\langle\bm{u} \cdot \nabla_{h} \bm{v}, \bm{v}\right\rangle_{1} 
    + \left\langle\nabla_{h} \cdot\left(\bm{u} \bm{v}^{T}\right), \bm{v}\right\rangle_{1}\right) = 0. 
\end{equation} 

Inequality \eqref{estimate:Bnorm} could be derived as follows: 
\begin{equation}
    \begin{aligned}
        \left|\mathcal{B}\left(\bm{u}, \bm{v}, \bm{w}\right)\right| 
        & = \frac{1}{2} \left| \left\langle\bm{u} \cdot \nabla_{h} \bm{v}, \bm{w}\right\rangle_{1} 
        + \left\langle\nabla_{h} \cdot\left(\bm{u} \bm{v}^{T}\right), \bm{w}\right\rangle_{1} \right| 
        \\
        & = \frac{1}{2}\left|\left\langle\bm{u} \cdot \nabla_{h} \bm{v}, \bm{w}\right\rangle_{1} 
        - \left\langle\bm{u} \cdot \nabla_{h} \bm{w}, \bm{v}\right\rangle_{1}\right| 
        \\
        & \leq \frac{1}{2}\left( \left| \left\langle \bm{u} \cdot \nabla_{h} \bm{v}, \bm{w} \right \rangle_{1} \right| 
        + \left| \left\langle \bm{u} \cdot \nabla_{h} \bm{w}, \bm{v} \right\rangle_{1} \right| \right) 
        \\
        & \leq \frac{1}{2}\|\bm{u}\|_{2}\left(\left\|\nabla_{h} \bm{v}\right\|_{\infty} \cdot\|\bm{w}\|_{2} 
        + \left\|\nabla_{h} \bm{w}\right\|_{2} \cdot\|\bm{v}\|_{\infty}\right). 
    \end{aligned} 
\end{equation} 
This completes the proof. 
\end{proof}

Taking a difference between the numerical system \eqref{scheme: main} and the consistency estimate \eqref{consistency} results in the following error evolutionary equations: 
\begin{subequations}
    \begin{align}
        \frac{\hat{e}_{\bm{u}}^{m+1}-e_{\bm{u}}^{m}}{\tau} 
        & +  b_{h} (\tilde{e}_{\bm{u}}^{m+1/2}, \hat{\breve{\mathrm{\bf U}}}^{m+1/2} ) 
        + b_{h} (\tilde{\bm{u}}^{m+1/2}, \hat{e}_{\bm{u}}^{m+1/2} ) 
        + \nabla_{h} e_{\psi}^{m} 
        - \Delta_{h} \hat{e}_{\bm{u}}^{m+1/2} 
        \nonumber \\
        & +  \mathcal{A}_{h} \tilde{e}_{n}^{m+1/2} \nabla_{h} \breve{\mathrm{M}}_{n}^{m+1/2} 
        + \mathcal{A}_{h} \tilde{n}^{m+1/2} \nabla_{h} e_{\mu_{n}}^{m+1/2} 
        \nonumber \\ 
        & +  \mathcal{A}_{h} \tilde{e}_{p}^{m+1/2} \nabla_{h} \breve{\mathrm{M}}_{p}^{m+1/2} 
        + \mathcal{A}_{h} \tilde{p}^{m+1/2} \nabla_{h} e_{\mu_{p}}^{m+1/2} 
        = \bm{\zeta}_u^{m}, 
        \label{error: u_hat} \\
        \frac{e_{n}^{m+1}-e_{n}^{m}}{\tau} 
        & +  \nabla_{h} \cdot (\mathcal{A}_{h} \tilde{e}_{n}^{m+1/2} \hat{\breve{\mathrm{\bf U}}}^{m+1/2} 
        + \mathcal{A}_{h} \tilde{n}^{m+1/2} \hat{e}_{\bm{u}}^{m+1/2} ) 
        \nonumber \\ 
        = & \nabla_{h} \cdot (\mathcal{A}_{h} \tilde{e}_{n}^{m+1/2} \nabla_{h} \breve{\mathrm{M}}_{n}^{m+1/2} 
        + \mathcal{A}_{h} \tilde{n}^{m+1/2} \nabla_{h} e_{\mu_{n}}^{m+1/2} ) 
        + \zeta_{n}^{m}, 
        \label{error: n}\\
        e_{\mu_{n}}^{m+1/2} 
        = & F_{\breve{\mathsf{N}}^{m}} (\breve{\mathsf{N}}^{m+1} ) 
        - G_{n^{m}} (n^{m+1} ) 
        + ( - \Delta_h )^{-1} ( e_{n}^{m+1/2} - e_{p}^{m+1/2} ) 
        \nonumber \\ 
        &  + \tau ( \ln \breve{\mathsf{N}}^{m+1} - \ln n^{m+1} - \ln \breve{\mathsf{N}}^{m} + \ln n^{m} ), 
        \label{error: mu_n}\\ 
        \frac{e_{p}^{m+1}-e_{p}^{m}}{\tau} 
        & +  \nabla_{h} \cdot (\mathcal{A}_{h} \tilde{e}_{p}^{m+1/2} \hat{\breve{\mathrm{\bf U}}}^{m+1/2} 
        + \mathcal{A}_{h} \tilde{p}^{m+1/2} \hat{e}_{\bm{u}}^{m+1/2} ) 
        \nonumber \\ 
        = & \nabla_{h} \cdot (\mathcal{A}_{h} \tilde{e}_{p}^{m+1/2} \nabla_{h} \breve{\mathrm{M}}_{p}^{m+1/2} 
        + \mathcal{A}_{h} \tilde{p}^{m+1/2} \nabla_{h} e_{\mu_{p}}^{m+1/2} ) 
        + \zeta_{p}^{m}, 
        \label{error: p}\\
        e_{\mu_{p}}^{m+1/2} 
        = & G_{\breve{\mathsf{P}}^{m}} (\breve{\mathsf{P}}^{m+1} ) 
        - G_{p^{m}} (p^{m+1} ) 
        + ( - \Delta_h )^{-1} ( e_{p}^{m+1/2} - e_{n}^{m+1/2} ) 
        \nonumber \\ 
        & + \tau ( \ln \breve{\mathsf{P}}^{m+1} - \ln p^{m+1} - \ln \breve{\mathsf{P}}^{m} + \ln p^{m} ), 
        \label{error: mu_p}\\ 
        \frac{e_{\bm{u}}^{m+1} - \hat{e}_{\bm{u}}^{m+1}}{\tau} 
        & +  \frac{1}{2} \nabla_{h} (e_{\psi}^{m+1} - e_{\psi}^{m} ) = 0, 
        \label{error: psi}\\
        \nabla_{h} \cdot e_{\bm{u}}^{m+1} = & 0. 
    \end{align}
\end{subequations} 
To proceed with the convergence analysis, the following a-priori assumption is made for the numerical error functions at the previous time steps:
\begin{equation}\label{prior assumption}
    \| e_{\bm{u}}^{k} \|_{2}, \, \| e_{n}^{k} \|_{2}, \, \| e_{p}^{k} \|_{2} 
    \leq\tau^{\frac{15}{4}} + h^{\frac{15}{4}}, 
    \quad k = m, m-1, \quad 
    \| \nabla_{h} e_{\psi}^{m} \|_{2} \leq \tau^{\frac{11}{4}} + h^{\frac{11}{4}}. 
\end{equation} 
Such an a-priori assumption will be recovered by the error estimate in the next time step, 
which will be demonstrated later. Of course, this a-priori assumption leads to a $W_{h}^{1, \infty}$ bound for the numerical error function at the previous time steps, which comes from the inverse inequality, the linear refinement requirement $\lambda_{1} h \leq \tau \leq \lambda_{2} h$, as well as the discrete Poincar\'e inequality (stated in Proposition~\ref{prop:1}):
\begin{equation}\label{eqn: estimate of en and ep}
    \begin{aligned}
        & \| e_{n}^{k} \|_{\infty} 
        \leq \frac{C \| e_{n}^{k} \|_{2}}{h} 
        \leq C ( \tau^{\frac{11}{4}} + h^{\frac{11}{4}} ) 
        \leq \tau^\frac52 + h^\frac52 \leq \frac{\delta}{8}, 
        \\ 
        & \| e_{p}^{k} \|_{\infty} 
        \leq \frac{C \|e_{p}^{k} \|_{2}}{h} 
        \leq C ( \tau^{\frac{11}{4}} + h^{\frac{11}{4}} ) 
        \leq \tau^\frac52 + h^\frac52 \leq \frac{\delta}{8}, 
        \\  
        & \| \nabla_{h} e_{n}^{k} \|_{\infty} 
        \leq \frac{2 \| e_{n}^{k} \|_{\infty}}{h} 
        \leq C ( \tau^{\frac{7}{4}} + h^{\frac{7}{4}} ) 
        \leq \tau + h \leq 1, 
        \\ 
        & \| \nabla_{h} e_{p}^{k} \|_{\infty} 
        \leq \frac{\left\| e_{p}^{k} \right\|_{\infty}}{h} 
        \leq C (\tau^{\frac{7}{4}} + h^{\frac{7}{4}} ) 
        \leq \tau + h \leq 1 , 
    \end{aligned}
\end{equation} 
for $k=m, m-1$, provided that $\tau$ and $h$ are sufficiently small. Subsequently, with the help of the regularity assumption \eqref{estimate: W}, an $\ell^{\infty}$ bound for the numerical solution could be derived at the previous time steps: 
\begin{equation}\label{eqn: Linfty estimate of nk and pk}
    \begin{aligned}
        \| n^{k} \|_{\infty} 
        \leq \| \breve{\mathsf{N}}^{k} \|_{\infty} 
        + \| e_{n}^{k} \|_{\infty} 
        \leq \tilde{C}_1 := C^{\star} + 1,  \quad 
        \| p^{k} \|_{\infty} 
        \leq \| \breve{\mathsf{P}}^{k} \|_{\infty} 
        + \| e_{p}^{k} \|_{\infty} 
        \leq \tilde{C}_1 . 
    \end{aligned}
\end{equation} 
Moreover, a combination of the $\ell^{\infty}$ estimate \eqref{eqn: estimate of en and ep} for the numerical error function and the separation estimate \eqref{eqn: separation} leads to a similar separation bound for the numerical solution at the previous time steps: 
\begin{equation}\label{eqn: estimate of nk and pk}
    n^{k} \geq \breve{\mathsf{N}}^k - \| e_{n}^{k} \|_{\infty} 
    \geq \frac{\delta}{2} , 
    \quad \text{ and } \quad 
    p^{k} \geq \breve{\mathsf{P}}^k - \| e_{p}^{k} \|_{\infty} 
    \geq \frac{\delta}{2} . 
\end{equation} 
Therefore, at the intermediate time instant $t^{m+1/2}$, the following estimates would be available: 
\begin{equation}
    \begin{aligned}
        \frac{3}{2} \breve{\mathsf{N}}^{m}
        -\frac{1}{2} \breve{\mathsf{N}}^{m-1} 
        & = \frac{1}{2} (\breve{\mathsf{N}}^{m+1} 
        + \breve{\mathsf{N}}^{m} ) 
        + O (\tau^2) , 
        \quad \text { since } 
        \breve{\mathsf{N}}^{m+1} 
        -2 \breve{\mathsf{N}}^{m} 
        + \breve{\mathsf{N}}^{m-1} 
        = O (\tau^{2}), 
        \\
        \frac{1}{2} (\breve{\mathsf{N}}^{m+1} 
        + \breve{\mathsf{N}}^{m} ) 
        & = \breve{\mathsf{N}} (t^{m+1/2} ) 
        + O (\tau^{2} ), 
        \quad 
        \breve{\mathsf{N}} (t^{m+1/2}) 
        \geq \frac{5 \delta}{8} , \quad(\text {by }\eqref{eqn: separation}), 
        \\
        & \quad \text {so that } 
        \frac{3}{2} \breve{\mathsf{N}}^{m}
        -\frac{1}{2} \breve{\mathsf{N}}^{m-1} 
        \geq \frac{5 \delta}{8}  
        - O (\tau^{2}), 
        \\
        \|\frac{3}{2} e_{n}^{m} -\frac{1}{2} e_{n}^{m-1} \|_{\infty} 
        & \le C (\tau^{\frac{11}{4}}+h^{\frac{11}{4}} ), 
        \quad(\text { by }\eqref{eqn: estimate of en and ep}), 
        \\
        \tilde{n}^{m+1/2} & = \frac{3}{2} n^{m} - \frac{1}{2} n^{m-1} 
        = \frac{3}{2} \breve{\mathsf{N}}^{m} - \frac{1}{2} \breve{\mathsf{N}}^{m-1} 
        - \frac{3}{2} e_{n}^{m} - \frac{1}{2} e_{n}^{m-1} 
        \\ 
        & \geq \frac{5 \delta}{8}  
        - O (\tau^{2} ) 
        - O ( \tau^{\frac{11}{4}} + h^{\frac{11}{4}} ) 
        \geq \frac{\delta}{2}, 
        \\ 
        \tilde{p}^{m+1/2} & = \frac{3}{2} p^{m} - \frac{1}{2} p^{m-1} 
        = \frac{3}{2} \breve{\mathrm{P}}^{m} - \frac{1}{2} \breve{\mathrm{P}}^{m-1} 
        - \frac{3}{2} e_{p}^{m} - \frac{1}{2} e_{p}^{m-1} 
        \ge \frac{\delta}{2}. 
    \end{aligned}
    \label{separation est-3} 
\end{equation} 
As a result, the phase separation bound for the average mobility functions, $\tilde{n}^{m+1/2}$ and $\tilde{p}^{m+1/2}$, has also been established, and such a bound will be useful in the later analysis.

Taking a discrete inner product with \eqref{error: u_hat} by $2\hat{e}_{\bm{u}}^{m+1/2} = \hat{e}_{\bm{u}}^{m+1} + e_{\bm{u}}^{m}$ leads to  
\begin{equation}\label{error estimate: u_hat}
    \begin{aligned}
        & \frac{1}{\tau} ( \|\hat{e}_{\bm{u}}^{m+1} \|_{2}^{2} - \| e_{\bm{u}}^{m} \|_{2}^{2} ) 
        + 2 \mathcal{B} (\tilde{e}_{\bm{u}}^{m+1/2}, \hat{\mathrm{\bf U}}^{m+1/2}, \hat{e}_{\bm{u}}^{m+1/2} ) 
        + 2 \mathcal{B} (\tilde{\bm{u}}^{m+1/2}, \hat{e}_{\bm{u}}^{m+1/2}, \hat{e}_{\bm{u}}^{m+1/2} ) 
        \\ 
        & + 2 \|\nabla_{h} \hat{e}_{\bm{u}}^{m+1/2} \|_{2}^{2} 
        = - \langle\nabla_{h} e_{\psi}^{m}, \hat{e}_{\bm{u}}^{m+1}+e_{\bm{u}}^{m} \rangle_{1} 
        - 2 \langle\mathcal{A}_{h} \tilde{e}_{n}^{m+1/2} \nabla_{h} \breve{\mathrm{M}}_{n}^{m+1/2}, \hat{e}_{\bm{u}}^{m+1/2} \rangle_{1} 
        \\
        &- 2 \langle\mathcal{A}_{h} \tilde{n}^{m+1/2} \nabla_{h} e_{\mu_{n}}^{m+1/2}, \hat{e}_{\bm{u}}^{m+1/2} \rangle_{1} 
        - 2 \langle\mathcal{A}_{h} \tilde{e}_{p}^{m+1/2} \nabla_{h} \breve{\mathrm{M}}_{p}^{m+1/2}, \hat{e}_{\bm{u}}^{m+1/2} \rangle_{1} 
        \\
        &- 2 \langle\mathcal{A}_{h} \tilde{p}^{m+1/2} \nabla_{h} e_{\mu_{p}}^{m+1/2}, \hat{e}_{\bm{u}}^{m+1/2} \rangle_{1} 
        + 2 \langle\bm{\zeta}_u^{m}, \hat{e}_{\bm{u}}^{m+1/2} \rangle_{1}. 
    \end{aligned}
\end{equation} 
With an application of the nonlinear identity \eqref{estimate:B} in Lemma \ref{estimate:B and Bnorm}, we immediately get 
\begin{equation}\label{error identity: B}
    \mathcal{B} (\tilde{\bm{u}}^{m+1/2}, \hat{e}_{\bm{u}}^{m+1/2}, \hat{e}_{\bm{u}}^{m+1/2} ) = 0. 
\end{equation} 
The second term on the left hand side of \eqref{error estimate: u_hat} could be bounded with the help of inequality \eqref{estimate:Bnorm}: 
\begin{equation}\label{error estimate: B}
    \begin{aligned}
        & 2 |\mathcal{B} (\tilde{e}_{\bm{u}}^{m+1/2}, \hat{\mathrm{\bf U}}^{m+1/2}, \hat{e}_{\bm{u}}^{m+1/2} ) | 
        \\ 
        \leq & \|\tilde{e}_{\bm{u}}^{m+1/2} \|_{2} ( \|\nabla_{h} \hat{\mathrm{\bf U}}^{m+1/2} \|_{\infty} \cdot \|\hat{e}_{\bm{u}}^{m+1/2} \|_{2} 
        + \|\hat{\mathrm{U}}^{m+1/2} \|_{\infty} \cdot \|\nabla_{h} \hat{e}_{\bm{u}}^{m+1/2} \|_{2} ) 
        \\
        \leq & C^{*} \|\tilde{e}_{\bm{u}}^{m+1/2} \|_{2} ( \|\hat{e}_{\bm{u}}^{m+1/2} \|_{2} 
        + \|\nabla_{h} \hat{e}_{\bm{u}}^{m+1/2} \|_{2} ) 
        \\ 
        \leq & C^{*} \|\tilde{e}_{\bm{u}}^{m+1/2} \|_{2} \cdot (C_{0}+1 ) \|\nabla_{h} \hat{e}_{\bm{u}}^{m+1/2} \|_{2} 
        \le \frac{(C^{*} (C_{0}+1 ) )^{2}}{2} \|\tilde{e}_{\bm{u}}^{m+1/2} \|_{2}^{2} 
        + \frac{1}{2} \|\nabla_{h} \hat{e}_{\bm{u}}^{m+1/2} \|_{2}^{2} 
        \\ 
        \leq & \tilde{C}_{2} (3 \|e_{\bm{u}}^{m} \|_{2}^{2} 
        +  \|e_{\bm{u}}^{m-1} \|_{2}^{2} ) 
        + \frac{1}{2} \|\nabla_{h} \hat{e}_{\bm{u}}^{m+1/2} \|_{2}^{2}, 
\end{aligned}
\end{equation} 
in which $\tilde{C}_{2}=\frac{\left(C^{*}\left(C_{0}+1\right)\right)^{2}}{2}$, and the $W_{h}^{1, \infty}$ assumption \eqref{estimate: W} (for the constructed solution $\breve{\mathrm{\bf U}}$) has been used in the derivation. In fact, the discrete Pincar\'e inequality, $\|\hat{e}_{\bm{u}}^{m+1/2} \|_{2} \leq C_{0} \|\nabla_{h} \hat{e}_{\bm{u}}^{m+1/2} \|_{2}$, (which comes from Proposition~\ref{prop:1}), was applied in the second step, because of the no-penetration boundary condition for $\hat{e}_{\bm{u}}^{m+1/2}$.

In terms of numerical error inner product associated with the pressure gradient, we have 
\begin{equation}\label{error idendity: psi}
    \langle\nabla_{h} e_{\psi}^{m}, e_{\bm{u}}^{k} \rangle_{1} 
    = - \langle e_{\psi}^{m}, \nabla_{h} \cdot e_{\bm{u}}^{k} \rangle_{C} = 0, \quad \text { since } 
    \nabla_{h} \cdot e_{\bm{u}}^{m}=0, \quad k = m, m+1, 
\end{equation} 
in which the summation by parts formula \eqref{lem: discrete inner of u and f} has been applied. Regarding the other pressure gradient inner product term, an application of \eqref{error: psi} indicates that 
\begin{equation}\label{error estimate: psi}
    \begin{aligned}
        \langle\nabla_{h} e_{\psi}^{m}, \hat{e}_{\bm{u}}^{m+1} \rangle_{1} 
        & = \langle\nabla_{h} e_{\psi}^{m}, e_{\bm{u}}^{m+1} \rangle_{1} 
        + \frac{1}{2} \tau \langle\nabla_{h} e_{\psi}^{m}, \nabla_{h} (e_{\psi}^{m+1}-e_{\psi}^{m} ) \rangle_{1} 
        \\
        & = \frac{1}{2} \tau \langle\nabla_{h} e_{\psi}^{m}, \nabla_{h} (e_{\psi}^{m+1}-e_{\psi}^{m} ) \rangle_{1} 
        \\
        & = \frac{1}{4} \tau ( \|\nabla_{h} e_{\psi}^{m+1} \|_{2}^{2} 
        -  \|\nabla_{h} e_{\psi}^{m} \|_{2}^{2} 
        - \|\nabla_{h} (e_{\psi}^{m+1} - e_{\psi}^{m} ) \|_{2}^{2} ), 
    \end{aligned}
\end{equation} 
where the second step is based on the fact that $\langle\nabla_{h} e_{\psi}^{m}, e_{\bm{u}}^{m+1} \rangle_{1}=0$. In terms of the second term on the right hand side of \eqref{error estimate: u_hat}, an application of discrete H\"older inequality gives 
\begin{subequations}\label{error estimate: U_NP}
    \begin{align}
        & -2 \langle\mathcal{A}_{h} \tilde{e}_{n}^{m+1/2} \nabla_{h} \breve{\mathrm{M}}_{n}^{m+1/2}, 
        \hat{e}_{\bm{u}}^{m+1/2} \rangle_{1} 
        \leq 2 \|\tilde{e}_{n}^{m+1/2} \|_{2} 
        \cdot \|\nabla_{h} \breve{\mathrm{M}}_{n}^{m+1/2} \|_{\infty} 
        \cdot \|\hat{e}_{\bm{u}}^{m+1/2} \|_{2} 
        \nonumber \\
        \leq & 2  C^{*} \|\tilde{e}_{n}^{m+1/2} \|_{2} 
        \cdot \|\hat{e}_{\bm{u}}^{m+1/2} \|_{2} 
        \leq 2  C_{0} C^{*} \|\tilde{e}_{n}^{m+1/2} \|_{2} 
        \cdot  \|\nabla_{h} \hat{e}_{\bm{u}}^{m+1/2} \|_{2} 
        \nonumber \\
        \leq & 8 C_{0}^{2} (C^{*})^{2} \|\tilde{e}_{n}^{m+1/2} \|_{2}^{2} 
        + \frac{1}{8} \|\nabla_{h} \hat{e}_{\bm{u}}^{m+1/2} \|_{2}^{2} 
        \leq \tilde{C}_{3} (3 \|e_{n}^{m} \|_{2}^{2} + \|e_{n}^{m-1} \|_{2}^{2} ) 
        + \frac{1}{8} \|\nabla_{h} \hat{e}_{\bm{u}}^{m+1/2} \|_{2}^{2}, 
        \\ 
        & -2 \langle\mathcal{A}_{h} \tilde{e}_{p}^{m+1/2} \nabla_{h} \breve{\mathrm{M}}_{p}^{m+1/2}, 
        \hat{e}_{\bm{u}}^{m+1/2} \rangle_{1} 
        \leq 2 \|\tilde{e}_{p}^{m+1/2} \|_{2} 
        \cdot \|\nabla_{h} \breve{\mathrm{M}}_{p}^{m+1/2} \|_{\infty} 
        \cdot \|\hat{e}_{\bm{u}}^{m+1/2} \|_{2} 
        \nonumber \\
        \leq & 2  C^* \|\tilde{e}_{p}^{m+1/2} \|_{2} 
        \cdot \|\hat{e}_{\bm{u}}^{m+1/2} \|_{2} 
        \leq 2  C_{0} C^* \|\tilde{e}_{p}^{m+1/2} \|_{2} 
        \cdot \|\nabla_{h} \hat{e}_{\bm{u}}^{m+1/2} \|_{2} 
        \nonumber \\
        \leq & 8 C_{0}^{2} (C^*)^2 \|\tilde{e}_{p}^{m+1/2} \|_{2}^{2} 
        + \frac{1}{8} \|\nabla_{h} \hat{e}_{\bm{u}}^{m+1/2} \|_{2}^{2} 
        \leq \tilde{C}_{3} (3 \|e_{p}^{m} \|_{2}^{2} + \| e_{p}^{m-1} \|_{2}^{2} ) 
        + \frac{1}{8} \|\nabla_{h} \hat{e}_{\bm{u}}^{m+1/2} \|_{2}^{2}, 
    \end{align}
\end{subequations} 
with $\tilde{C}_{3} = 8 C_{0}^{2}\left(C^{*}\right)^{2}$. Again, the $W_{h}^{1, \infty}$ assumption \eqref{estimate: M} and the discrete Poincaré inequality, $\|\hat{e}_{\bm{u}}^{m+1/2} \|_{2} \leq C_0 \|\nabla_{h} e_{\bm{u}}^{m+1/2} \|_{2}$, have been applied in the derivation. A bound for the local truncation error term would be straightforward:
\begin{equation}\label{error estimate: zeta1}
        2 \langle\bm{\zeta}_u^{m}, \hat{e}_{\bm{u}}^{m+\frac12} \rangle_{1} 
        \le 2 \|\bm{\zeta}_u^{m} \|_{2} \cdot \|\hat{e}_{\bm{u}}^{m+\frac12} \|_{2} 
        \le 2 C_{0} \|\bm{\zeta}_u^{m} \|_{2} \cdot \|\nabla_{h} \hat{e}_{\bm{u}}^{m+\frac12} \|_{2} 
        \leq 4 C_{0}^{2} \|\bm{\zeta}_u^{m} \|_{2}^{2}+\frac{1}{4} \|\nabla_{h} \hat{e}_{\bm{u}}^{m+\frac12} \|_{2}^{2}. 
\end{equation} 
As a consequence, a substitution of \eqref{error identity: B}-\eqref{error estimate: zeta1} into \eqref{error estimate: u_hat} results in 
\begin{equation}\label{error estimate: eu_hat}
    \begin{aligned}
        & \frac{1}{\tau} ( \|\hat{e}_{\bm{u}}^{m+1} \|_{2}^{2} - \|e_{\bm{u}}^{m} \|_{2}^{2} ) 
        + \|\nabla_{h} \hat{e}_{\bm{u}}^{m+1/2} \|_{2}^{2} 
        + \frac{\tau}{4} ( \|\nabla_{h} e_{\psi}^{m+1} \|_{2}^{2} - \|\nabla_{h} e_{\psi}^{m} \|_{2}^{2} )  
        \\
        \leq & - 2 \langle\mathcal{A}_{h} \tilde{n}^{m+1/2} \nabla_{h} e_{\mu_{n}}^{m+1/2}, \hat{e}_{\bm{u}}^{m+1/2} \rangle_{1} 
        - 2 \langle\mathcal{A}_{h} \tilde{p}^{m+1/2} \nabla_{h} e_{\mu_{p}}^{m+1/2}, \hat{e}_{\bm{u}}^{m+1/2} \rangle_{1} 
        \\
        & + \frac{\tau}{4} \|\nabla_{h} (e_{\psi}^{m+1}-e_{\psi}^{m} ) \|_{2}^{2} 
        + \tilde{C}_{2} (3 \|e_{\bm{u}}^{m} \|_{2}^{2} 
        + \|e_{\bm{u}}^{m-1} \|_{2}^{2} ) 
        \\
        & + \tilde{C}_{3} (3 \|e_{n}^{m} \|_{2}^{2}+ \|e_{n}^{m-1} \|_{2}^{2} ) 
        + \tilde{C}_{3} (3 \|e_{p}^{m} \|_{2}^{2}+ \|e_{p}^{m-1} \|_{2}^{2} ) 
        + 4 C_{0}^{2} \|\bm{\zeta}_u^{m} \|_{2}^{2}. 
    \end{aligned}
\end{equation} 
On the other hand, a discrete inner product with \eqref{error: psi} by $2 e_{\bm{u}}^{m+1}$ yields 
\begin{equation}\label{error estimate: u and psi}
    \begin{aligned}
        & \|e_{\bm{u}}^{m+1} \|_{2}^{2}  - \|\hat{e}_{\bm{u}}^{m+1} \|_{2}^{2} 
        + \|e_{\bm{u}}^{m+1}-\hat{e}_{\bm{u}}^{m+1} \|_{2}^{2} = 0, \quad 
        \text{so that}
        \\ 
        & \|e_{\bm{u}}^{m+1} \|_{2}^{2} - \|\hat{e}_{\bm{u}}^{m+1} \|_{2}^{2} 
        + \frac{\tau^{2}}{4} \|\nabla_{h} (e_{\psi}^{m+1}-e_{\psi}^{m} ) \|_{2}^{2} = 0 ,  
    \end{aligned}
\end{equation} 
where the discrete divergence-free condition for $e_{\bm{u}}^{m+1}$ has been applied. Subsequently, a combination of \eqref{error estimate: eu_hat} and \eqref{error estimate: u and psi} leads to 
\begin{equation}\label{error estimate: e}
    \begin{aligned}
        & \frac{1}{\tau} ( \|e_{\bm{u}}^{m+1} \|_{2}^{2}- \|e_{\bm{u}}^{m} \|_{2}^{2} ) 
        + \|\nabla_{h} \hat{e}_{\bm{u}}^{m+1/2} \|_{2}^{2} 
        + \frac{\tau}{4} ( \|\nabla_{h} e_{\psi}^{m+1} \|_{2}^{2}- \|\nabla_{h} e_{\psi}^{m} \|_{2}^{2} ) 
        \\
        \leq & - 2 \langle\mathcal{A}_{h} \tilde{n}^{m+1/2} \nabla_{h} e_{\mu_{n}}^{m+1/2}, \hat{e}_{\bm{u}}^{m+1/2} \rangle_{1} 
        - 2 \langle\mathcal{A}_{h} \tilde{p}^{m+1/2} \nabla_{h} e_{\mu_{p}}^{m+1/2}, \hat{e}_{\bm{u}}^{m+1/2} \rangle_{1} 
        \\ 
        & + \tilde{C}_{2} (3 \|e_{\bm{u}}^{m} \|_{2}^{2} 
        + \|e_{\bm{u}}^{m-1} \|_{2}^{2} ) 
        + \tilde{C}_{3} (3 \|e_{n}^{m} \|_{2}^{2}+ \|e_{n}^{m-1} \|_{2}^{2} 
        + 3 \|e_{p}^{m} \|_{2}^{2}+ \|e_{p}^{m-1} \|_{2}^{2} ) 
        + 4 C_{0}^{2} \|\bm{\zeta}_u^{m} \|_{2}^{2}. 
\end{aligned}
\end{equation} 
Now we proceed into a rough error estimate for the PNP error evolutionary equation. Taking a discrete inner product with \eqref{error: n} and \eqref{error: p} by $e_{\mu_{n}}^{m+1/2}$ and $e_{\mu_{p}}^{m+1/2}$ respectively, and a summation gives  
\begin{equation}\label{estimate: en and ep}
    \begin{aligned}
        & \frac{1}{\tau} \langle e_{n}^{m+1}, e_{\mu_{n}}^{m+1/2} \rangle_{C} 
        + \frac{1}{\tau} \langle e_{p}^{m+1}, e_{\mu_{p}}^{m+1/2} \rangle_{C} 
        \\ 
        & - \langle\mathcal{A}_{h} \tilde{n}^{m+1/2} \nabla_{h} e_{\mu_{n}}^{m+1/2}, \hat{e}_{\bm{u}}^{m+1/2} \rangle_{1} 
        + \langle\mathcal{A}_{h} \tilde{n}^{m+1/2} \nabla_{h} e_{\mu_{n}}^{m+1/2} , 
        \nabla_{h} e_{\mu_{n}}^{m+1/2} \rangle_{1} 
        \\ 
        & - \langle\mathcal{A}_{h} \tilde{p}^{m+1/2} \nabla_{h} e_{\mu_{p}}^{m+1/2}, \hat{e}_{\bm{u}}^{m+1/2} \rangle_{1} 
        + \langle\mathcal{A}_{h} \tilde{p}^{m+1/2} \nabla_{h} e_{\mu_{p}}^{m+1/2} , 
        \nabla_{h} e_{\mu_{p}}^{m+1/2} \rangle_{1} 
        \\
        & = \langle\mathcal{A}_{h} \tilde{e}_{n}^{m+1/2} \nabla_{h} e_{\mu_{n}}^{m+1/2}, \hat{\breve{\mathrm{\bf U}}}^{m+1/2} \rangle_{1} 
        - \langle\mathcal{A}_{h} \tilde{e}_{n}^{m+1/2} \nabla_{h} \breve{\mathrm{M}}_{n}^{m+1/2} , 
        \nabla_{h} e_{\mu_{n}}^{m+1/2} \rangle_{1} 
        \\ 
        & + \langle\mathcal{A}_{h} \tilde{e}_{p}^{m+1/2} \nabla_{h} e_{\mu_{p}}^{m+1/2}, \hat{\breve{\mathrm{\bf U}}}^{m+1/2} \rangle_{1} 
        - \langle\mathcal{A}_{h} \tilde{e}_{p}^{m+1/2} \nabla_{h} \breve{\mathrm{M}}_{p}^{m+1/2} , 
        \nabla_{h} e_{\mu_{p}}^{m+1/2} \rangle_{1} 
        \\
        & + \langle\zeta_{n}^{m}, e_{\mu_{n}}^{m+1/2} \rangle_{C} 
        + \frac{1}{\tau} \langle e_{n}^{m}, e_{\mu_{n}}^{m+1/2} \rangle_{C} 
        + \langle\zeta_{p}^{m}, e_{\mu_{p}}^{m+1/2} \rangle_{C} 
        + \frac{1}{\tau} \langle e_{p}^{m}, e_{\mu_{p}}^{m+1/2} \rangle_{C}, 
\end{aligned}
\end{equation}
with an application of summation by parts formula \eqref{lem: discrete inner of g and afu}. Meanwhile, the right hand side terms could be estimated as follows, with the help of the $\ell^{\infty}$ bound \eqref{estimate: W} and \eqref{estimate: M}:
\begin{subequations} \label{estimate: n and p}
    \begin{align}
        & \langle\mathcal{A}_{h} \tilde{e}_{n}^{m+1/2} \nabla_{h} e_{\mu_{n}}^{m+1/2}, \hat{\breve{\mathrm{\bf U}}}^{m+1/2} \rangle_{1} 
        \leq \|\tilde{e}_{n}^{m+1/2} \|_{2} \cdot |\nabla_{h} e_{\mu_{n}}^{m+1/2} \|_{2} 
        \cdot \|\hat{\breve{\mathrm{\bf U}}}^{m+1/2} \|_{\infty} 
        \nonumber \\ 
        \leq & C^* \|\tilde{e}_{n}^{m+1/2} \|_{2} \cdot \|\nabla_{h} e_{\mu_{n}}^{m+1/2} \|_{2} 
        \leq 4 (C^*)^2 \delta^{-1}  (3 \|e_{n}^{m} \|_{2}^{2} 
        + \|e_{n}^{m-1} \|_{2}^{2} ) 
        + \frac{\delta}{16} \|\nabla_{h} e_{\mu_{n}}^{m+1/2} \|_{2}^{2}, 
        \\ 
        & \langle\mathcal{A}_{h} \tilde{e}_{p}^{m+1/2} \nabla_{h} e_{\mu_{p}}^{m+1/2}, \hat{\breve{\mathrm{\bf U}}}^{m+1/2} \rangle_{1} 
        \leq \|\tilde{e}_{p}^{m+1/2} \|_{2} \cdot \|\nabla_{h} e_{\mu_{p}}^{m+1/2} \|_{2} 
        \cdot \|\hat{\breve{\mathrm{\bf U}}}^{m+1/2} \|_{\infty} 
        \nonumber \\ 
        \leq & C^{*} \|\tilde{e}_{p}^{m+1/2} \|_{2} \cdot \|\nabla_{h} e_{\mu_{p}}^{m+1/2} \|_{2} 
        \leq 4 (C^{*})^{2} \delta^{-1}  (3 \|e_{p}^{m} \|_{2}^{2} + \|e_{p}^{m-1} \|_{2}^2 ) 
        + \frac{\delta}{16} \|\nabla_{h} e_{\mu_{p}}^{m+1/2} \|_{2}^{2}, 
        \\ 
        & - \langle\mathcal{A}_{h} \tilde{e}_{n}^{m+1/2} \nabla_{h} \breve{\mathrm{M}}_{n}^{m+1/2} , 
        \nabla_{h} e_{\mu_{n}}^{m+1/2} \rangle_{1} 
        \leq  \|\tilde{e}_{n}^{m+1/2} \|_{2} \cdot \|\nabla_{h} e_{\mu_{n}}^{m+1/2} \|_{2} 
        \cdot \| \nabla_{h} \breve{\mathrm{M}}_{n}^{m+1/2} \|_{\infty} 
        \nonumber \\ 
        \leq & C^{*} \|\tilde{e}_{n}^{m+1/2} \|_{2} \cdot \|\nabla_{h} e_{\mu_{n}}^{m+1/2} \|_{2} 
        \leq 4 (C^{*} )^{2} \delta^{-1} (3 \|e_{n}^{m} \|_{2}^{2} + \|e_{n}^{m-1} \|_{2}^2 ) 
        + \frac{\delta}{16} \|\nabla_{h} e_{\mu_{n}}^{m+1/2} \|_{2}^{2}, 
        \\ 
        & - \langle\mathcal{A}_{h} \tilde{e}_{p}^{m+1/2} \nabla_{h} \breve{\mathrm{M}}_{p}^{m+1/2} , 
        \nabla_{h} e_{\mu_{p}}^{m+1/2} \rangle_{1} 
        \leq \|\tilde{e}_{p}^{m+1/2} \|_{2} \cdot \|\nabla_{h} e_{\mu_{p}}^{m+1/2} \|_{2} 
        \cdot \| \nabla_{h} \breve{\mathrm{M}}_{p}^{m+1/2} \|_{\infty} 
        \nonumber \\ 
        \leq & C^{*} \|\tilde{e}_{p}^{m+1/2} \|_{2} \cdot \|\nabla_{h} e_{\mu_{p}}^{m+1/2} \|_{2} 
        \leq 4 (C^{*} )^{2} \delta^{-1}  (3 \|e_{p}^{m} \|_{2}^{2} + \|e_{p}^{m-1} \|_{2}^2 ) 
        + \frac{\delta}{16} \|\nabla_{h} e_{\mu_{p}}^{m+1/2} \|_{2}^{2}, 
        \\ 
        & \langle\zeta_{n}^{m}, e_{\mu_{n}}^{m+1/2} \rangle_{C} 
        \leq \|\zeta_{n}^{m} \|_{-1, h} \cdot \|\nabla_{h} e_{\mu_{n}}^{m+1/2} \|_{2} 
        \leq 4 \delta^{-1} \|\zeta_{n}^{m} \|_{-1, h}^{2} 
        + \frac{\delta}{16} \|\nabla_{h} e_{\mu_{n}}^{m+1/2} \|_{2}^{2}, 
        \\ 
        & \langle\zeta_{p}^{m}, e_{\mu_{p}}^{m+1/2} \rangle_{C} 
        \leq \|\zeta_{p}^{m} \|_{-1, h} \cdot \|\nabla_{h} e_{\mu_{p}}^{m+1/2} \|_{2} 
        \leq 4 \delta^{-1} \|\zeta_{p}^{m} \|_{-1, h}^{2} 
        + \frac{\delta}{16} \|\nabla_{h} e_{\mu_{p}}^{m+1/2} \|_{2}^{2}, 
        \\ 
        & \langle e_{n}^{m}, e_{\mu_{n}}^{m+1/2} \rangle_{C} 
        \leq \|e_{n}^{m} \|_{-1, h} \cdot \|\nabla_{h} e_{\mu_{n}}^{m+1/2} \|_{2} 
        \leq \frac{4}{\tau\delta} \|e_{n}^{m} \|_{-1, h}^{2} 
        + \frac{\tau\delta}{16} \|\nabla_{h} e_{\mu_{n}}^{m+1/2} \|_{2}^{2}, 
        \\ 
        & \langle e_{p}^{m}, e_{\mu_{p}}^{m+1/2} \rangle_{C} 
        \leq \|e_{p}^{m} \|_{-1, h} \cdot \|\nabla_{h} e_{\mu_{p}}^{m+1/2} \|_{2} 
        \leq \frac{4}{\tau\delta} \|e_{p}^{m} \|_{-1, h}^{2} 
        + \frac{\tau\delta}{16} \|\nabla_{h} e_{\mu_{p}}^{m+1/2} \|_{2}^{2}. 
    \end{align}
\end{subequations} 
Because of the phase separation bound~\eqref{separation est-3} for the average mobility functions, the following estimate becomes available:  
\begin{equation}\label{error estimate: An and Ap}
    \begin{aligned}
        \langle\mathcal{A}_{h} \tilde{n}^{m+1/2} \nabla_{h} e_{\mu_{n}}^{m+1/2} , 
        \nabla_{h} e_{\mu_{n}}^{m+1/2} \rangle_{1} 
        \ge & \frac{\delta}{2} \| \nabla_{h} e_{\mu_{n}}^{m+1/2} \|_2^{2},  
        \\ 
        \langle\mathcal{A}_{h} \tilde{p}^{m+1/2} \nabla_{h} e_{\mu_{p}}^{m+1/2} , 
        \nabla_{h} e_{\mu_{p}}^{m+1/2} \rangle_{1} 
        \ge & 
          \frac{\delta}{2} \| \nabla_{h} e_{\mu_{p}}^{m+1/2} \|_2^{2} . 
    \end{aligned}
\end{equation}
In terms of the first two terms on the left hand side of \eqref{estimate: en and ep}, we have to recall the following preliminary rough estimate, which has been established in an existing work~\cite{Liu2023PNP}. 

\begin{lemma}  \cite{Liu2023PNP} \label{lem: rough integral estimate} 
The regularity requirement~\eqref{estimate: W}, and phase separation~\eqref{eqn: separation} assumptions are made for the constructed approximate solution $( \breve{\mathsf N}, \breve{\mathsf P})$, as well as the  a-priori assumption~\eqref{prior assumption} for the numerical solution at the previous time steps. In addition, we define the following sets: 
\begin{equation} 
  \Lambda_n = \left\{ ( i,j): n_{i,j}^{m+1} \ge 2 C^* +1 \right\} , \quad 
  \Lambda_p = \left\{ ( i,j): p_{i,j}^{m+1} \ge 2 C^* +1 \right\} , 
  \label{defi-Lambda-1} 
\end{equation} 
and denote $K_n^* := | \Lambda_n |$, $K_p^* := | \Lambda_p |$, the number of grid points in $\Lambda_n$ and $\Lambda_p$, respectively. Then we have a rough bound control of the following nonlinear inner products:   
\begin{eqnarray} 
\begin{aligned} 
  & 
   \langle e_n^{m+1} , F_{\breve{\mathsf N}^m} ( \breve{\mathsf N}^{m+1} ) 
    - F_{n^m} ( n^{m+1} )  \rangle_C  
\\
  &
    + \tau \langle e_n^{m+1} , \ln \breve{\mathsf N}^{m+1} - \ln n^{m+1}  
    - ( \ln \breve{\mathsf N}^m - \ln n^m ) \rangle_C    
  \ge  
  \frac12 C^* K_n^* h^2 - \tilde{C}_4 \| e_n^m \|_2^2 , 
\\
  & 
   \langle e_p^{m+1} , F_{\breve{\mathsf P}^m} ( \breve{\mathsf P}^{m+1} ) 
    - F_{p^m} ( p^{m+1} )  \rangle_C 
\\
  &
    + \tau \langle e_p^{m+1} , \ln \breve{\mathsf P}^{m+1} - \ln p^{m+1} 
    - ( \ln \breve{\mathsf P}^m - \ln p^m ) \rangle_C     
  \ge  
  \frac12 C^* K_p^* h^2 - \tilde{C}_4 \| e_p^m \|_2^2 ,   
\end{aligned}    
    \label{integral-rough-0} 
\end{eqnarray}  
in which $\tilde{C}_4$ is a constant only dependent on $\delta$ and $C^*$, independent of $\tau$ and $h$. In addition, if $K_n^* =0$ and $K_p^*=0$, i.e, both $\Lambda_n$ and $\Lambda_p$ are empty sets, we have an improved bound control: 
\begin{eqnarray} 
\begin{aligned} 
  & 
   \langle e_n^{m+1} , F_{\breve{\mathsf N}^m} ( \breve{\mathsf N}^{m+1} ) 
    - F_{n^m} ( n^{m+1} )  \rangle_C  
\\
  &
    + \tau \langle e_n^{m+1} , \ln \breve{\mathsf N}^{m+1} - \ln n^{m+1}  
    - ( \ln \breve{\mathsf N}^m - \ln n^m ) \rangle_C 
  \ge \tilde{C}_5  \| e_n^{m+1} \|_2^2 - \tilde{C}_4 \| e_n^m \|_2^2 , 
\\
  & 
  \langle e_p^{m+1} , F_{\breve{\mathsf P}^m} ( \breve{\mathsf P}^{m+1} ) 
    - F_{p^m} ( p^{m+1} )  \rangle_C 
\\
  &
    + \tau \langle e_p^{m+1} , \ln \breve{\mathsf P}^{m+1} - \ln p^{m+1} 
    - ( \ln \breve{\mathsf P}^m - \ln p^m ) \rangle_C  
  \ge  
  \tilde{C}_5  \| e_p^{m+1} \|_2^2 - \tilde{C}_4 \| e_p^m \|_2^2 ,   
\end{aligned}    
    \label{integral-rough-1} 
\end{eqnarray}  
in which $\tilde{C}_5$ stands for another constant only dependent on $\delta$ and $C^*$.  
\end{lemma}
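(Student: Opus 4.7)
The plan is to mimic the strategy of \cite{Liu2023PNP} via a point-wise decomposition of the discrete inner products according to the partitions $\Lambda_n, \Lambda_p$ defined in \eqref{defi-Lambda-1}. I will treat only the inequality for $n$ in detail; the $p$-version follows by the same argument with $(n,\breve{\mathsf N})$ replaced by $(p,\breve{\mathsf P})$.

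First I would telescope through the base point,
\[
F_{\breve{\mathsf N}^m}(\breve{\mathsf N}^{m+1}) - F_{n^m}(n^{m+1})
= \bigl[F_{\breve{\mathsf N}^m}(\breve{\mathsf N}^{m+1}) - F_{\breve{\mathsf N}^m}(n^{m+1})\bigr]
+ \bigl[F_{\breve{\mathsf N}^m}(n^{m+1}) - F_{n^m}(n^{m+1})\bigr],
\]
and observe that the first bracket has the same sign as $e_n^{m+1}$ by Lemma~\ref{lem: F}(2), so its product with $e_n^{m+1}$ is non-negative point-wise. The second bracket, which measures the sensitivity of $F_a$ to its base point, is of size $O(|e_n^m|)$ uniformly in the good region, since $\breve{\mathsf N}^m \in [5\delta/8, C^{*}]$ and the a-priori assumption \eqref{prior assumption} yields $n^m \ge \delta/2$; via Young's inequality the corresponding cross term with $e_n^{m+1}$ is absorbed into the $\tilde C_4 \|e_n^m\|_2^2$ remainder. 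The temporal cross-difference $\ln\breve{\mathsf N}^m - \ln n^m$ is handled identically, using the bound $|\ln\breve{\mathsf N}^m - \ln n^m| \le (2/\delta)|e_n^m|$ from the mean value theorem.

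On the good region $\{(i,j) : n^{m+1}_{i,j} < 2C^{*}+1\}$ both $n^{m+1}$ and $\breve{\mathsf N}^{m+1}$ lie in $[\delta/2, 2C^{*}+1]$. Applying the mean value theorem to $F_{\breve{\mathsf N}^m}(\cdot)$ (whose derivative equals $\tfrac12 G''(\xi) = 1/(2\xi)$ by Lemma~\ref{lem: F}(1)) and to $\ln(\cdot)$ yields uniform lower bounds on their derivatives, producing a point-wise estimate of the form $\tilde C_5 (e_n^{m+1})^2$ with $\tilde C_5$ depending only on $\delta$ and $C^{*}$. On the bad region $\Lambda_n$, the gap $n^{m+1} - \breve{\mathsf N}^{m+1} \ge C^{*}+1$ combined with $\ln n^{m+1} - \ln\breve{\mathsf N}^{m+1} \ge \ln\bigl((2C^{*}+1)/C^{*}\bigr)$ forces a uniformly positive point-wise contribution; summing against the measure $h^2$ and collecting constants delivers the $\tfrac12 C^{*} K_n^{*} h^2$ lower bound. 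Discarding the good-region positive contribution then gives \eqref{integral-rough-0}, while if $\Lambda_n = \emptyset$ the good-region estimate applies across the entire grid and \eqref{integral-rough-1} follows immediately.

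The main obstacle is the careful bookkeeping of prefactors: the contribution from the $F$-bracket and the $\tau$-weighted logarithmic bracket must be combined so that the final coefficient in front of $K_n^{*} h^2$ is genuinely $\tfrac12 C^{*}$, independent of $\tau$, and the Young-type absorption of the base-point sensitivity and of the $m$-level logarithmic difference must be tuned so as not to consume more than a fixed fraction of the positive quadratic $\tilde C_5 (e_n^{m+1})^2$ arising from the good region. Once the free parameter in Young's inequality is calibrated against the uniform derivative bounds of $F_{\breve{\mathsf N}^m}$ and $\ln$ on $[\delta/2, 2C^{*}+1]$, both estimates fall out as stated.
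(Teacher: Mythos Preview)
The paper does not give its own proof of this lemma; it is quoted from \cite{Liu2023PNP}, and your telescoping decomposition together with the good/bad split according to $\Lambda_n$ is exactly the strategy of that reference.

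There is, however, one point in your outline that needs repair. On the bad set $\Lambda_n$ the base-point bracket $e_n^{m+1}\bigl[F_{\breve{\mathsf N}^m}(n^{m+1}) - F_{n^m}(n^{m+1})\bigr]$ and the $m$-level logarithmic piece $-\tau\, e_n^{m+1}(\ln\breve{\mathsf N}^m - \ln n^m)$ are still present, and Young's inequality is the wrong tool there: it would generate an $(e_n^{m+1})^2$ term with nothing to absorb it, since on $\Lambda_n$ the dominant positive contribution from the first bracket is only \emph{linear} in $|e_n^{m+1}|$. The correct fix is to observe that the Lipschitz bound $\bigl|F_{\breve{\mathsf N}^m}(n^{m+1}) - F_{n^m}(n^{m+1})\bigr| \le C(\delta)\,|e_n^m|$ actually holds on the entire grid (it depends only on $\breve{\mathsf N}^m, n^m \in [\delta/2, C^*+1]$, not on $n^{m+1}$), and then invoke the point-wise smallness $\|e_n^m\|_\infty \le \delta/8$ from \eqref{eqn: estimate of en and ep}: on $\Lambda_n$ these cross terms are therefore bounded by a small constant times $|e_n^{m+1}|$ and are absorbed by the main positive term rather than by a quadratic. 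Two smaller remarks: the parenthetical ``whose derivative equals $\tfrac12 G''(\xi)$'' is not what Lemma~\ref{lem: F}(1) asserts---the clean route is the representation $F_a'(x) = \int_0^1 t\,G''\bigl((1-t)a + tx\bigr)\,dt$, which immediately gives $F_a'(x) \ge 1/\bigl(2(2C^*+1)\bigr)$ whenever $a,x \le 2C^*+1$; and on the good region you cannot yet assume $n^{m+1} \ge \delta/2$ (that is precisely what the rough estimate is supposed to deliver), but fortunately this lower bound is not needed for the $F_a'$ estimate just quoted.
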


As a direct consequence of Lemma~\ref{lem: rough integral estimate}, we see that 
\begin{subequations} \label{integral-rough-2} 
    \begin{align}
        \langle e_{n}^{m+1}, e_{\mu_{n}}^{m+1/2} \rangle_{C} 
        = & \langle e_{n}^{m+1} , F_{\breve{\mathsf{N}}^{m}} (\breve{\mathsf{N}}^{m+1} ) 
        - F_{n^{m}} (n^{m+1} ) 
        + ( - \Delta_h )^{-1} ( e_{n}^{m+1/2} - e_{p}^{m+1/2} ) \rangle_{C} 
        \nonumber \\ 
        & + \tau \langle e_{n}^{m+1} , \ln \breve{\mathsf{N}}^{m+1} - \ln n^{m+1} \rangle_{C} 
        - \tau \langle e_{n}^{m+1} , \ln \breve{\mathsf{N}}^{m} - \ln n^{m} \rangle_{C}, 
        \nonumber \\ 
        \geq & \frac{1}{2} C^{*} K_{n}^{*} h^2 -\tilde{C}_{4} \|\tilde{e}_{n}^{m} \|_{2}^{2} 
        + \langle\tilde{e}_{n}^{m+1}, (-\Delta_h )^{-1} (\tilde{e}_{n}^{m+1/2}-\tilde{e}_{p}^{m+1/2} ) \rangle_C , 
        \\ 
        \langle e_{p}^{m+1}, e_{\mu_{p}}^{m+1/2} \rangle_{C} 
        = & \langle e_{p}^{m+1} , F_{\breve{\mathsf{P}}^{m}} (\breve{\mathsf{P}}^{m+1} ) 
        - F_{p^{m}} (p^{m+1} ) 
        + ( - \Delta_h )^{-1} ( e_{p}^{m+1/2} - e_{p}^{m+1/2} ) \rangle_{C} 
        \nonumber \\ 
        & + \tau \langle e_{p}^{m+1} , \ln \breve{\mathsf{P}}^{m+1} - \ln p^{m+1} \rangle_{C} 
        - \tau \langle e_{p}^{m+1} , \ln \breve{\mathrm{P}}^{m} - \ln p^{m} \rangle_{C}, 
        \nonumber \\ 
        \geq & \frac{1}{2} C^{*} K_{p}^{*} h^2 -\tilde{C}_{4} \|\tilde{e}_{p}^{m} \|_{2}^{2} 
        + \langle\tilde{e}_{p}^{m+1}, (-\Delta_h )^{-1} (\tilde{e}_{p}^{m+1/2}-\tilde{e}_{p}^{m+1/2} ) \rangle_C .  
    \end{align}
\end{subequations} 
On the other hand, the following fact is observed:
\begin{equation}
    \begin{aligned}
        & \langle e_{n}^{m+1},
        (-\Delta_h )^{-1} (e_{n}^{m+1/2}-e_{p}^{m+1/2} ) \rangle_C 
        + \langle e_{p}^{m+1},
        (-\Delta_h )^{-1} (e_{p}^{m+1/2}-e_{n}^{m+1/2} ) \rangle_C  
        \\
        & = \frac{1}{2} \langle (-\Delta_h )^{-1} (e_{n}^{m+1}-e_{p}^{m+1}+e_{n}^{m}-e_{p}^{m} ), e_{n}^{m+1}-e_{p}^{m+1} \rangle_C \\
        & \geq \frac{1}{4} ( \|e_{n}^{m+1}-e_{p}^{m+1} \|_{-1, h}^{2}- \|e_{n}^{m}-e_{p}^{m} \|_{-1, h}^{2} ) .
\end{aligned}
\end{equation}
Going back~\eqref{integral-rough-2}, we arrive at  
\begin{equation}\label{estimate: left en and ep}
    \begin{aligned}
        & \langle e_{n}^{m+1}, e_{\mu_{n}}^{m+1/2} \rangle_C  
        + \langle e_{p}^{m+1/2}, e_{\mu_{p}}^{m+1} \rangle_C  
        \\
        \ge & 
          \frac{1}{2} C^{*} (K_{n}^{*} + K_{p}^{*} ) h^2  
        - \tilde{C}_{4} ( \|e_{n}^{m} \|_{2}^{2} + \|e_{p}^{m} \|_{2}^{2} ) 
        - \frac{1}{4} \|e_{n}^{m}-e_{p}^{m} \|_{-1, h}^{2}. 
    \end{aligned}
\end{equation} 
Subsequently, a substitution of \eqref{estimate: n and p}-\eqref{estimate: left en and ep} into \eqref{estimate: en and ep} yields 
\begin{equation}\label{rough error estimate: NP}
    \begin{aligned}
        & \frac{1}{\tau} \Big( \frac{1}{2} C^{*} (K_{n}^{*} + K_{p}^{*} ) h^2 
        - \tilde{C}_{4} ( \|e_{n}^{m} \|_{2}^{2} +  \|e_{p}^{m} \|_{2}^{2} ) 
        - \frac{1}{4} \|e_{n}^{m}-e_{p}^{m} \|_{-1, h}^{2} \Big) 
        + \frac{\delta}{4} \| \nabla_{h} e_{\mu_{n}}^{m+1/2} \|_2^{2} 
        \\
         & 
        + \frac{\delta}{4} \| \nabla_{h} e_{\mu_{p}}^{m+1/2} \|_2^{2}   
         -  \langle\mathcal{A}_{h} \tilde{n}^{m+1/2} \nabla_{h} e_{\mu_{n}}^{m+1/2}, \hat{e}_{\bm{u}}^{m+1/2} \rangle_{1} 
        -  \langle\mathcal{A}_{h} \tilde{p}^{m+1/2} \nabla_{h} e_{\mu_{p}}^{m+1/2}, \hat{e}_{\bm{u}}^{m+1/2} \rangle_{1} 
        \\ 
        \leq & 8 (C^{*} )^{2} \delta^{-1} (3 \|e_{n}^{m} \|_{2}^{2} + \|e_{n}^{m-1} \|_{2}^{2} 
        + 3 \|e_{p}^{m} \|_{2}^{2}  + \|e_{p}^{m-1} \|_{2}^{2} ) 
        \\ 
        & + 4 \delta^{-1}  ( \|\zeta_{n}^{m} \|_{-1, h}^{2}  + \|\zeta_{p}^{m} \|_{-1, h}^{2} ) 
        + 4 ( \tau\delta)^{-1} ( \|e_{n}^{m} \|_{-1, h}^{2}  + \|e_{p}^{m} \|_{-1, h}^{2} ) . 
    \end{aligned} 
\end{equation}
Moreover, a combination of \eqref{error estimate: e} and \eqref{rough error estimate: NP} leads to  
\begin{equation}\label{rough error estimate}
    \begin{aligned}
        & \frac{1}{2} \|e_{\bm{u}}^{m+1} \|_{2}^{2} 
        + \frac{\tau^{2}}{8} \|\nabla_{h} e_{\psi}^{m+1} \|_{2}^{2} 
        + \frac{\tau \delta}{4} ( \| \nabla_{h} e_{\mu_{n}}^{m+1/2} \|^{2} 
        + \| \nabla_{h} e_{\mu_{p}}^{m+1/2} \|^{2} ) 
        + \frac{1}{2} C^{*} (K_{n}^{*} + K_{p}^{*} ) h^2 
        \\ 
        \leq & \frac{1}{2} \|e_{\bm{u}}^{m} \|_{2}^{2} 
        + \frac{\tau^{2}}{8} \|\nabla_{h} e_{\psi}^{m} \|_{2}^{2}  
        + \tilde{C}_{4} ( \|e_{n}^{m} \|_{2}^{2} + \|e_{p}^{m} \|_{2}^{2} ) 
        + \frac{1}{4} \|e_{n}^{m}-e_{p}^{m} \|_{-1, h}^{2} 
        \\ 
        & + 8 (C^{*} )^{2} \delta^{-1} \tau  (3 \|e_{n}^{m} \|_{2}^{2} + \|e_{n}^{m-1} \|_{2}^{2}  
        + 3 \|e_{p}^{m} \|_{2}^{2}  + \|e_{p}^{m-1} \|_{2}^{2} ) 
        \\ 
        & + 4 \delta^{-1} \tau ( \|\zeta_{n}^{m} \|_{-1, h}^{2}  
        + \|\zeta_{p}^{m} \|_{-1, h}^{2} ) 
        + 4 \delta^{-1} \tau^{-1} ( \|e_{n}^{m} \|_{-1, h}^{2} 
        + \|e_{p}^{m} \|_{-1, h}^{2} )  
        + 2 C_{0}^{2}\tau \|\bm{\zeta}_u^{m} \|_{2}^{2} 
        \\ 
        & + \frac{\tilde{C}_{3}\tau}{2} (3 \|e_{n}^{m} \|_{2}^{2}+ \|e_{n}^{m-1} \|_{2}^{2}  
        + 3 \|e_{p}^{m} \|_{2}^{2}+ \|e_{p}^{m-1} \|_{2}^{2} ) 
        + \frac{\tilde{C}_{2}\tau}{2} (3 \|e_{\bm{u}}^{m} \|_{2}^{2} 
        + \|e_{\bm{u}}^{m-1} \|_{2}^{2} ) . 
    \end{aligned} 
\end{equation}
Meanwhile, the following estimates are available for the right hand side of \eqref{rough error estimate}, 
with the help of a-priori assumption \eqref{prior assumption}:
\begin{subequations}\label{rough error estimate: last step}
    \begin{align}
        & 4 \delta^{-1} \tau^{-1} ( \|e_{n}^{m} \|_{-1, h}^{2} + \|e_{p}^{m} \|_{-1, h}^{2} ) 
        \leq C \delta^{-1} \tau^{-1} ( \|e_{n}^{m} \|_{2}^{2}+ \|e_{p}^{m} \|_{2}^{2} ) 
        \leq C ( \tau^{\frac{13}{2}}+h^{\frac{13}{2}} ), 
        \\ 
        & 4 \delta^{-1} \tau ( \|\zeta_{n}^{m} \|_{-1, h}^{2} + \|\zeta_{p}^{m} \|_{-1, h}^{2} )  , \, \, 
        2 C_{0}^{2}\tau \|\bm{\zeta}_u^{m} \|_{2}^{2} 
        \leq C (\tau^{9}+\tau h^{8} ), 
        \\ 
        & \Big( 8 (C^{*})^{2} \delta^{-1} + \frac{\tilde{C}_{3}}{2} \Big) \tau 
         (3 \|e_{n}^{m} \|_{2}^{2} + \|e_{n}^{m-1} \|_{2}^{2} 
         + 3 \|e_{p}^{m} \|_{2}^{2} + \|e_{p}^{m-1} \|_{2}^{2} ) 
        \leq C (\tau^{\frac{17}{2}}+h^{\frac{17}{2}} ), 
        \\
        & \tilde{C}_{4} ( \|e_{n}^{m} \|_{2}^{2} +  \|e_{p}^{m} \|_{2}^{2} )  , \, \, 
        \frac{1}{4} \|e_{n}^{m}-e_{p}^{m} \|_{-1, h}^{2}  
        \leq C (\tau^{\frac{15}{2}}+h^{\frac{15}{2}} ) , 
        \\ 
        & \Big( \frac{1}{2} + \frac{3\tilde{C}_{2}\tau}{2} \Big) \|e_{\bm{u}}^{m} \|_{2}^{2} 
        + \frac{\tilde{C}_{2}\tau}{2} \|e_{\bm{u}}^{m-1} \|_{2}^{2} 
        \leq C (\tau^{\frac{15}{2}}+h^{\frac{15}{2}} ), 
        \\ 
        & \frac{\tau^{2}}{8} \|\nabla_{h} e_{\psi}^{m} \|_{2}^{2} 
        \leq C (\tau^{\frac{15}{2}}+h^{\frac{15}{2}} ) , 
    \end{align}
\end{subequations} 
in which the discrete Poincar\'e inequality, $\|f\|_{-1, h} \leq C_0 \|f\|_{2}$, as well as the linear refinement constraint $\lambda_{1} h \leq \tau \leq \lambda_{2} h$, have been repeatedly applied. 
A substitution of~\eqref{rough error estimate: last step} into \eqref{rough error estimate} gives  
\begin{equation} 
  \frac{1}{2} C^{*} (K_{n}^{*} + K_{p}^{*} ) h^2 \le C ( \tau^{\frac{13}{2}}+h^{\frac{13}{2}} ) .  
  \label{integral-rough-3} 
\end{equation} 
If $K_n^* \ge 1$ or $K_p^* \ge 1$, the above inequality will make a contradiction, provided that $\tau$ and $h$ are sufficiently small. Subsequently, we conclude that $K_n^* =0$ and $K_p^* =0$, so that both $\Lambda_n$ and $\Lambda_p$ are empty sets. As a result, an application of \eqref{integral-rough-1} (in Lemma~\ref{lem: rough integral estimate}) yields an improved estimate: 
\begin{equation}
    \begin{aligned}
        &  \langle e_{n}^{m+1}, e_{\mu_{n}}^{m+1/2} \rangle_C 
        + \langle e_{p}^{m+1/2}, e_{\mu_{p}}^{m+1/2} \rangle_C 
        \\ 
        \geq & \tilde{C}_{5} ( \| e_{n}^{m+1} \|_{2}^{2} + \| e_{p}^{m+1} \|_{2}^{2} ) 
        -\tilde{C}_{4} ( \| e_{n}^{m} \|_{2}^{2} + \| e_{p}^{m} \|_{2}^{2} ) 
        - \frac{1}{4} \| e_{n}^{m} - e_{p}^{m} \|_{-1, h}^{2} . 
    \end{aligned} 
    \label{integral-rough-4} 
\end{equation} 
Furthermore, its combination with \eqref{rough error estimate: last step} and \eqref{error estimate: e}-\eqref{error estimate: An and Ap} implies that
\begin{equation}
    \frac12 \| e_{\bm{u}}^{m+1} \|_{2}^2 
    + \tilde{C}_{5} (\| e_{n}^{m+1} \|_{2}^{2} +  \| e_{p}^{m+1} \|_{2}^{2} ) 
    \leq C (\tau^{\frac{13}{2}}+h^{\frac{13}{2}} ) .  
\end{equation}
In particular, the following rough error estimate becomes available: 
\begin{equation}
     \| e_{\bm{u}}^{m+1} \|_{2} 
    + \| e_{n}^{m+1} \|_{2} + \| e_{p}^{m+1} \|_{2} 
    \leq \hat{C} (\tau^{\frac{13}{4}} + h^{\frac{13}{4}} ) 
    \leq \tau^{3}+h^{3} , 
\end{equation} 
under the linear refinement requirement $\lambda_{1} h \leq \tau \leq \lambda_{2} h$, with $\hat{C}$ dependent only on the physical parameters and the computational domain. 

As a direct consequence of the above rough error estimate, an application of 2-D inverse inequality indicates that 
\begin{equation}\label{eqn: Linfty of en and ep}
 \|e_{n}^{m+1} \|_{\infty}+ \|e_{p}^{m+1} \|_{\infty} 
\leq \frac{C ( \|e_{n}^{m+1} \|_{2}+ \|e_{p}^{m+1} \|_{2} )}{h} 
\leq C (\tau^{2}+h^{2} ) \leq \frac{\delta}{8}, 
\end{equation}
under the same linear refinement requirement, provided that $\tau$ and $h$ are sufficiently small. With the help of the separation estimate~\eqref{eqn: separation} for the constructed approximate solution $(\breve{\mathsf N}, \breve{\mathsf P})$, a similar property becomes available for the numerical solution at time step $t^{m+1}$:
\begin{equation}\label{eqn: estimate of n and p}
\frac{\delta}{2} \leq n^{m+1} \leq C^*+\frac{\delta}{2} \leq \tilde{C}_1 , 
\quad \text { and } \quad  
\frac{\delta}{2} \leq p^{m+1} \leq C^*+\frac{\delta}{2} \leq \tilde{C}_1 . 
\end{equation} 
Such a $\|\cdot\|_{\infty}$ bound will play a crucial role in the refined error estimate. Moreover, the following bound for the discrete temporal derivative of the numerical solution could also be derived: 
\begin{equation}\label{eqn: rough estimate of n and p}
\begin{aligned}
& \|e_{n}^{m+1}-e_{n}^{m} \|_{\infty} 
\leq \|e_{n}^{m+1} \|_{\infty}+ \|e_{n}^{m} \|_{\infty} 
\leq C (\tau^{2} + h^{2}) \leq \tau, \quad (\text {by }\eqref{eqn: estimate of en and ep}, 
\eqref{eqn: Linfty of en and ep}), 
\\
& \|\breve{\mathsf N}^{m+1}-\breve{\mathsf N}^{m} \|_{\infty} 
\leq C^* \tau, \quad (\text {by }\eqref{estimate: W}), 
\\
& \|n^{m+1}-n^{m} \|_{\infty} 
\leq \|\breve{\mathsf{N}}^{m+1}-\breve{\mathsf{N}}^{m} \|_{\infty}
+ \|e_{n}^{m+1}-e_{n}^{m} \|_{\infty} 
\leq (C^{*}+1) \tau, \\
& \|p^{m+1}-p^{m} \|_{\infty} 
\leq \left(C^{*}+1\right) \tau, \quad \text {(similar analysis).}
\end{aligned}
\end{equation}

\subsection{A refined error estimate}
The following preliminary result, which has been established in an existing work~\cite{Liu2023PNP}, is recalled. 

\begin{lemma}\cite{Liu2023PNP} \label{lem:priori refine error estimate}
Under the a-priori $\|\cdot\|_{\infty}$ estimate \eqref{eqn: Linfty estimate of nk and pk}, 
\eqref{eqn: estimate of nk and pk} for the numerical solution at the previous time steps and the rough $\|\cdot\|_{\infty}$ estimates \eqref{eqn: estimate of n and p}, \eqref{eqn: rough estimate of n and p} for the one at the next time step, we have
\begin{subequations}\label{priori refine error estimate}
    \begin{align}
        & \langle e_{n}^{m+1} - e_{n}^{m}, 
        F_{\breve{\mathsf{N}}^{m}} (\hat{\mathsf{N}}^{m+1}) - F_{n^{m}} (n^{m+1} ) \rangle_C  
        \nonumber \\ 
        & \geq \frac{1}{2} \Big( \Big\langle \frac{1}{\breve{\mathsf{N}}^{m+1}}, (e_{n}^{m+1} )^{2} \Big\rangle_C 
        - \Big\langle \frac{1}{\breve{\mathsf{N}}^{m}}, (e_{n}^{m} )^{2} \Big\rangle_C \Big) 
        - \tilde{C}_{6} \tau ( \|e_{n}^{m+1} \|_{2}^{2}+ \|e_{n}^{m} \|_{2}^{2} ), 
        \\
        &  \langle e_{n}^{m+1}-e_{n}^{m}, 
        \ln \hat{\mathsf{N}}^{m+1}-\ln n^{m+1}- (\ln \hat{\mathsf{N}}^{m}-\ln n^{m} ) \rangle_C 
        \geq -\tilde{C}_{7} \tau ( \|e_{n}^{m+1} \|_{2}^{2}+ \|e_{n}^{m} \|_{2}^{2} ), 
        \\ 
        & \langle e_{p}^{m+1} - e_{p}^{m}, 
        F_{\breve{\mathsf{P}}^{m}} (\hat{\mathsf{P}}^{m+1}) - F_{p^{m}} (p^{m+1} ) \rangle_C  
        \nonumber \\ 
        & \geq \frac{1}{2} \Big( \Big\langle \frac{1}{\breve{\mathsf{P}}^{m+1}}, (e_{p}^{m+1} )^{2} \Big\rangle_C 
        - \Big\langle \frac{1}{\breve{\mathsf{P}}^{m}}, (e_{p}^{m} )^{2} \Big\rangle_C \Big) 
        - \tilde{C}_{6} \tau ( \|e_{p}^{m+1} \|_{2}^{2}+ \|e_{p}^{m} \|_{2}^{2} ), 
        \\
        &  \langle e_{p}^{m+1}-e_{p}^{m}, 
        \ln \hat{\mathsf{P}}^{m+1}-\ln p^{m+1}- (\ln \hat{\mathsf{P}}^{m}-\ln p^{m} ) \rangle_C 
        \geq -\tilde{C}_{7} \tau ( \|e_{p}^{m+1} \|_{2}^{2}+ \|e_{p}^{m} \|_{2}^{2} ), 
    \end{align}
\end{subequations} 
in which the constants $\tilde{C}_{6}$ and $\tilde{C}_{7}$ only depend on $\delta$, and $C^*$. 
\end{lemma}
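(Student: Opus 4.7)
\medskip

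\noindent\textbf{Proof proposal.} The plan is to handle the four inequalities in pairs by symmetry: the two involving $F_{\breve{\mathsf N}^m}$ and $F_{n^m}$ rely on a common Taylor/mean-value expansion, while the two involving the logarithmic quantity reduce to an algebraic identity. Since the $p$-inequalities are entirely analogous to the $n$-inequalities, I will concentrate on the latter.

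For the $F$-inequality, the key decomposition is
\begin{equation*}
F_{\breve{\mathsf N}^m}(\breve{\mathsf N}^{m+1}) - F_{n^m}(n^{m+1})
= \bigl[F_{\breve{\mathsf N}^m}(\breve{\mathsf N}^{m+1}) - F_{\breve{\mathsf N}^m}(n^{m+1})\bigr]
+ \bigl[F_{\breve{\mathsf N}^m}(n^{m+1}) - F_{n^m}(n^{m+1})\bigr].
\end{equation*}
Applying the mean value theorem in $x$ to the first bracket and in the parameter $a$ to the second, together with the integral formulas $F_a'(x)=\int_0^1 \tfrac{t}{a+t(x-a)}\,dt$ and $\partial_a F_a(x)=\int_0^1 \tfrac{1-t}{a+t(x-a)}\,dt$, both of which evaluate to $1/(2a)$ on the diagonal $x=a$. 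Using the temporal estimates $\|\breve{\mathsf N}^{m+1}-\breve{\mathsf N}^m\|_\infty\le C^\ast\tau$ from~\eqref{estimate: W} and $\|n^{m+1}-n^m\|_\infty\le(C^\ast+1)\tau$ from~\eqref{eqn: rough estimate of n and p}, together with the separation bounds $\breve{\mathsf N}^k\ge 5\delta/8$ and $n^k\ge\delta/2$, the two multipliers satisfy $F_{\breve{\mathsf N}^m}'(\xi_1)=\tfrac{1}{2\breve{\mathsf N}^{m+1}}+O(\tau)$ and $\partial_a F_{\xi_2}(n^{m+1})=\tfrac{1}{2\breve{\mathsf N}^{m}}+O(\tau)$ at every grid point. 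This yields the point-wise identity
\begin{equation*}
F_{\breve{\mathsf N}^m}(\breve{\mathsf N}^{m+1}) - F_{n^m}(n^{m+1})
=\frac{e_n^{m+1}}{2\breve{\mathsf N}^{m+1}}+\frac{e_n^{m}}{2\breve{\mathsf N}^{m}}+R,
\qquad |R|\le C\tau(|e_n^{m+1}|+|e_n^m|).
\end{equation*}

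Taking the discrete inner product with $e_n^{m+1}-e_n^m$ and expanding the leading term gives
\begin{equation*}
\tfrac12\bigl\langle e_n^{m+1}-e_n^m,\,\tfrac{e_n^{m+1}}{\breve{\mathsf N}^{m+1}}+\tfrac{e_n^{m}}{\breve{\mathsf N}^{m}}\bigr\rangle_C
=\tfrac12\bigl\langle\tfrac{(e_n^{m+1})^2}{\breve{\mathsf N}^{m+1}}-\tfrac{(e_n^m)^2}{\breve{\mathsf N}^m}\bigr\rangle_C
+\tfrac12\bigl\langle e_n^m e_n^{m+1}\bigl(\tfrac{1}{\breve{\mathsf N}^m}-\tfrac{1}{\breve{\mathsf N}^{m+1}}\bigr)\bigr\rangle_C.
\end{equation*}
The second term is bounded by $C\tau\|e_n^m\|_2\|e_n^{m+1}\|_2$ because $|1/\breve{\mathsf N}^m-1/\breve{\mathsf N}^{m+1}|\le 4(C^\ast\tau)/(5\delta/8)^2$, and the contribution from $R$ is controlled by the Cauchy-Schwarz estimate $C\tau(\|e_n^{m+1}\|_2+\|e_n^m\|_2)^2$. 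Combining these yields the desired inequality with a constant $\tilde C_6$ depending only on $\delta$ and $C^\ast$.

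For the logarithmic inequality, write $g^k=\ln\breve{\mathsf N}^k-\ln n^k = e_n^k/\xi^k$, where $\xi^k\in[\min(n^k,\breve{\mathsf N}^k),\max(n^k,\breve{\mathsf N}^k)]$ pointwise, so $\xi^k\ge\delta/2$. The crucial algebraic manipulation is
\begin{equation*}
(e_n^{m+1}-e_n^m)(g^{m+1}-g^m)
=\frac{(e_n^{m+1}-e_n^m)^2}{\xi^{m+1}}
+\frac{e_n^m(e_n^{m+1}-e_n^m)(\xi^m-\xi^{m+1})}{\xi^{m+1}\xi^m},
\end{equation*}
obtained by rewriting $e_n^{m+1}\xi^m-e_n^m\xi^{m+1}=\xi^m(e_n^{m+1}-e_n^m)+e_n^m(\xi^m-\xi^{m+1})$. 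The first term on the right is point-wise nonnegative. For the second, the temporal bounds $|\breve{\mathsf N}^{m+1}-\breve{\mathsf N}^m|\le C^\ast\tau$, $|n^{m+1}-n^m|\le(C^\ast+1)\tau$, and the smallness of $e_n^k$ from~\eqref{eqn: Linfty of en and ep} yield $|\xi^{m+1}-\xi^m|\le C\tau$; Cauchy-Schwarz then produces the bound $\tilde C_7\tau(\|e_n^{m+1}\|_2^2+\|e_n^m\|_2^2)$ on its absolute value, giving the claim.

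The main obstacle is bookkeeping: one must verify that the $O(\tau)$ errors in $F_a'(\xi_1)-\tfrac{1}{2\breve{\mathsf N}^{m+1}}$ and $\partial_a F_{\xi_2}(n^{m+1})-\tfrac{1}{2\breve{\mathsf N}^{m}}$ are uniform in the grid, which is where the regularity bounds~\eqref{estimate: W} on the constructed profile together with the rough a-priori $\|\cdot\|_\infty$ control of $n^k$, $p^k$ from~\eqref{eqn: estimate of n and p} and~\eqref{eqn: rough estimate of n and p} are indispensable. Once these pointwise asymptotic expansions are in hand, the remaining manipulations are elementary applications of Cauchy-Schwarz.
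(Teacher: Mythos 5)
The paper does not prove Lemma~\ref{lem:priori refine error estimate}: it cites it verbatim from \cite{Liu2023PNP}, so there is no in-paper argument to compare against. Your proof is correct and is essentially the standard argument one expects the cited reference to contain. The two load-bearing steps are exactly right: (i) the decomposition of $F_{\breve{\mathsf N}^m}(\breve{\mathsf N}^{m+1})-F_{n^m}(n^{m+1})$ via the mean value theorem in each slot, combined with the integral representations $F_a'(x)=\int_0^1 t/(a+t(x-a))\,dt$ and $\partial_a F_a(x)=\int_0^1 (1-t)/(a+t(x-a))\,dt$, both reducing to $1/(2a)$ on the diagonal, so that the separation bounds $\breve{\mathsf N}^k\ge 5\delta/8$, $n^k\ge\delta/2$ and the a-priori $\ell^\infty$ smallness of $e_n^k$ make the remainder uniformly $O(\tau)$; and (ii) the pointwise algebraic split of $(e_n^{m+1}-e_n^m)(g^{m+1}-g^m)$ into the nonnegative square $(e_n^{m+1}-e_n^m)^2/\xi^{m+1}$ plus an $O(\tau)$ cross term, which Cauchy--Schwarz then absorbs into the $\tilde{C}_7\tau(\|e_n^{m+1}\|_2^2+\|e_n^m\|_2^2)$ budget. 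Two cosmetic notes: the $\ell^\infty$ smallness of $e_n^m$ you need for $|\xi^{m+1}-\xi^m|\le C\tau$ is supplied by \eqref{eqn: estimate of en and ep} (the a-priori bound at the previous steps), not \eqref{eqn: Linfty of en and ep}, which controls only the new step $m+1$; and the occurrences of $\hat{\mathsf N}$, $\hat{\mathsf P}$ inside the inner products of the lemma statement are a typo for $\breve{\mathsf N}$, $\breve{\mathsf P}$, which your proof correctly reads in. Neither affects the validity of the argument.
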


Now we proceed into the refined error estimate. Again, a combination of the inner product equation \eqref{estimate: en and ep} and the estimates \eqref{estimate: n and p}, \eqref{error estimate: An and Ap}, leads to 
\begin{equation}\label{refine error estimate: n and p}
    \begin{aligned}
        & \langle e_{n}^{m+1} - e_{n}^{m}, e_{\mu_{n}}^{m+1/2} \rangle_C  
        + \langle e_{p}^{m+1} - e_{p}^{m}, e_{\mu_{p}}^{m+1/2} \rangle_C  
        + \frac{5 \tau\delta}{16}  ( \|\nabla_{h} e_{\mu_{n}}^{m+1/2} \|_{2}^{2} 
        +  \|\nabla_{h} e_{\mu_{p}}^{m+1/2} \|_{2}^{2} ) 
        \\ 
        & - \tau \langle\mathcal{A}_{h} \tilde{n}^{m+1/2} \nabla_{h} e_{\mu_{n}}^{m+1/2}, \hat{e}_{\bm{u}}^{m+1/2} \rangle_{1} 
        - \tau \langle\mathcal{A}_{h} \tilde{p}^{m+1/2} \nabla_{h} e_{\mu_{p}}^{m+1/2}, \hat{e}_{\bm{u}}^{m+1/2} \rangle_{1} 
        \\ 
        & \leq 8 (C^{*})^{2} \delta^{-1} \tau (3 \|e_{n}^{m} \|_{2}^{2} 
        + \|e_{n}^{m-1} \|_{2}^{2} + 3 \|e_{p}^{m} \|_{2}^{2} 
        + \|e_{p}^{m-1} \|_{2}^{2} ) 
        \\ 
        & + 4 \delta^{-1} \tau ( \|\zeta_{n}^{m} \|_{-1, h}^{2} + \|\zeta_{p}^{m} \|_{-1, h}^{2} ). 
    \end{aligned}
\end{equation} 
On the other hand, the temporal stencil inner product has to be analyzed more precisely. A detailed expansion for $e_{\mu_{n}}^{m+1/2}$ and $e_{\mu_{p}}^{m+1/2}$ reveals that 
\begin{equation}
    \begin{aligned}
        &  \langle e_{n}^{m+1} - e_{n}^{m}, e_{\mu_{n}}^{m+1/2} \rangle_C 
        + \langle e_{p}^{m+1} - e_{p}^{m}, e_{\mu_{p}}^{m+1/2} \rangle_C \\
       = & \langle e_{n}^{m+1} - e_{n}^{m} , 
        F_{\breve{\mathsf{N}}^{m}} (\breve{\mathsf{N}}^{m+1}) - F_{n^{m}} (n^{m+1} ) \rangle_C  
         + \langle e_{p}^{m+1} - e_{p}^{m}, 
        F_{\breve{\mathsf{P}}^{m}} (\breve{\mathsf{P}}^{m+1} ) - F_{p^{m}} (p^{m+1} ) \rangle_C 
        \\ 
        & + \tau \langle e_{n}^{m+1} - e_{n}^{m}, 
        \ln \breve{\mathsf{N}}^{m+1}-\ln n^{m+1}- (\ln \breve{\mathsf{N}}^{m}-\ln n^{m} ) \rangle_C  
        \\ 
        & + \tau \langle e_{p}^{m+1} - e_{p}^{m}, 
        \ln \breve{\mathsf{P}}^{m+1}-\ln p^{m+1}- (\ln \breve{\mathsf{P}}^{m}-\ln p^{m} ) \rangle_C 
        \\ 
        & + \langle e_{n}^{m+1} - e_{n}^{m} - ( e_{p}^{m+1} - e_{p}^{m} ) , 
        (-\Delta_h )^{-1} (e_{n}^{m+1/2} - e_{p}^{m+1/2} ) \rangle_C .  
    \end{aligned}
\end{equation} 
In terms of the last term, a careful calculation implies the following equality: 
\begin{equation}
    \begin{aligned}
        & \langle e_{n}^{m+1} - e_{n}^{m} - ( e_{p}^{m+1} - e_{p}^{m} ) , 
        (-\Delta_h )^{-1} (e_{n}^{m+1/2} - e_{p}^{m+1/2} ) \rangle_C 
        \\
        & =\frac{1}{2} ( \|e_{n}^{m+1}-e_{p}^{m+1} \|_{-1, h}^{2} - \|e_{n}^{m}-e_{p}^{m} \|_{-1, h}^{2} ). 
    \end{aligned} 
    \label{refined est-1} 
\end{equation} 
A combination of~\eqref{refined est-1} with the refined estimates \eqref{priori refine error estimate} (in Lemma \ref{lem:priori refine error estimate}) yields 
\begin{equation}\label{refine error estimate: left n and p}
    \begin{aligned}
        & \langle e_{n}^{m+1} - e_{n}^{m}, e_{\mu_{n}}^{m+1/2} \rangle_C  
        + \langle e_{p}^{m+1} - e_{p}^{m}, e_{\mu_{p}}^{m+1/2} \rangle_C 
        \\
        & \ge \mathcal{F}^{m+1}-\mathcal{F}^{m} 
        - (\tilde{C}_{6} + \tilde{C}_{7} ) \tau 
        ( \| e_{n}^{m+1} \|_{2}^{2} + \|e_{n}^{m} \|_{2}^{2} 
        +  \|e_{p}^{m+1} \|_{2}^{2} +  \|e_{p}^{m} \|_{2}^{2} ), 
        \\
        & \mathcal{F}^{m+1} 
        = \frac{1}{2} \Big( \Big\langle\frac{1}{\breve{\mathsf{N}}^{m+1}}, (e_{n}^{m+1} )^{2} \Big\rangle_C  
        + \Big\langle \frac{1}{\breve{\mathsf{P}}^{m+1}}, (e_{p}^{m+1} )^{2} \Big\rangle_C  
        + \|e_{n}^{m+1}-e_{p}^{m+1} \|_{-1, h}^{2} \Big). 
    \end{aligned}
\end{equation} 
In turn, a substitution of \eqref{refine error estimate: left n and p} into \eqref{refine error estimate: n and p} results in 
\begin{equation}\label{refine error estimate n and p}
    \begin{aligned}
        & \mathcal{F}^{m+1}-\mathcal{F}^{m} 
        - \tau \langle\mathcal{A}_{h} \tilde{n}^{m+1/2} \nabla_{h} e_{\mu_{n}}^{m+1/2}, \hat{e}_{\bm{u}}^{m+1/2} \rangle_{1} 
        - \tau \langle\mathcal{A}_{h} \tilde{p}^{m+1/2} \nabla_{h} e_{\mu_{p}}^{m+1/2}, \hat{e}_{\bm{u}}^{m+1/2} \rangle_{1} 
        \\ 
        & + \frac{5\tau\delta}{16} ( |\nabla_{h} e_{\mu_{n}}^{m+1/2} \|_{2}^{2} 
        + \|\nabla_{h} e_{\mu_{p}}^{m+1/2} \|_{2}^{2} ) 
        \\ 
        \leq & (\tilde{C}_{6} + \tilde{C}_{7} ) \tau ( \|e_{n}^{m+1} \|_{2}^{2} + \|e_{n}^{m} \|_{2}^{2} 
        + \|e_{p}^{m+1} \|_{2}^{2} + \|e_{p}^{m} \|_{2}^{2} ) 
        \\ 
        & + 8 (C^{*} )^{2} \delta^{-1} \tau (3 \|e_{n}^{m} \|_{2}^{2} 
        + \|e_{n}^{m-1} \|_{2}^{2}  + 3 \|e_{p}^{m} \|_{2}^{2} + \|e_{p}^{m-1} \|_{2}^{2} ) 
        + 4 \delta^{-1} \tau ( \|\zeta_{n}^{m} \|_{-1, h}^{2} + \|\zeta_{p}^{m} \|_{-1, h}^{2} )  
        \\
        \le & 
        \tilde{C}_8 \tau ( \mathcal {F}^{m+1} + \mathcal{F}^m + \mathcal{F}^{m-1} ) 
        + 4 \delta^{-1} \tau ( \|\zeta_{n}^{m} \|_{-1, h}^{2} + \|\zeta_{p}^{m} \|_{-1, h}^{2} )  , 
    \end{aligned}
\end{equation} 
with $\tilde{C}_{8} = 2 ( \tilde{C}_{6} + \tilde{C}_{7} + 24 (C^*)^{2} \delta^{-1}) C^*$,  and the following bound has been applied in the last step: 
\begin{equation}
   \Big\langle \frac{1}{\breve{\mathsf{N}}^{k}}, (e_{n}^{k} )^{2} \Big\rangle_C  
        \geq \frac{1}{C^*} \|e_{n}^{k} \|_{2}^{2}, \, \, \,  
        \Big\langle \frac{1}{\breve{\mathsf{P}}^{k}}, (e_{p}^{k} )^{2} \Big\rangle_C  
        \geq \frac{1}{C^*} \|e_{p}^{k} \|_{2}^{2} , \quad \mbox{so that} \, \, \, 
         \mathcal{F}^{k} \geq \frac{1}{2 C^*} ( \|e_{n}^{k} \|_{2}^{2}
        + \|e_{p}^{k} \|_{2}^{2} ) . 
\end{equation} 
In addition, a combination of \eqref{refine error estimate n and p} and \eqref{error estimate: e} leads to \begin{equation} 
    \begin{aligned}
    & \frac{1}{2} ( \|e_{\bm{u}}^{m+1} \|_{2}^{2}- \|e_{\bm{u}}^{m} \|_{2}^{2} ) 
        + \frac{\tau}{2} \|\nabla_{h} \hat{e}_{\bm{u}}^{m+1/2} \|_{2}^{2} 
        + \frac{\tau^{2}}{8} ( \|\nabla_{h} e_{\psi}^{m+1} \|_{2}^{2}- \|\nabla_{h} e_{\psi}^{m} \|_{2}^{2} ) 
        \\
        & + \mathcal{F}^{m+1}-\mathcal{F}^{m} 
        + \frac{5\tau\delta}{16}  ( \|\nabla_{h} e_{\mu_{n}}^{m+1/2} \|_{2}^{2} 
        +  \|\nabla_{h} e_{\mu_{p}}^{m+1/2} \|_{2}^{2} ) 
        \\ 
        \leq & \tilde{C}_{8} \tau (\mathcal{F}^{m+1}+\mathcal{F}^{m}+\mathcal{F}^{m-1} ) 
         + 4 C_{0}^{2} \tau \|\bm{\zeta}_u^{m} \|_{2}^{2} 
        + 4 \delta^{-1} \tau ( \|\zeta_{n}^{m} \|_{-1, h}^{2} 
        + \|\zeta_{p}^{m} \|_{-1, h}^{2} ) 
        \\ 
        & + \tilde{C}_2 \tau (3 \|e_{\bm{u}}^{m} \|_{2}^{2} 
        +  \|e_{\bm{u}}^{m-1} \|_{2}^{2} ) 
        + \tilde{C}_{3} \tau (3 \|e_{n}^{m} \|_{2}^{2}+ \|e_{n}^{m-1} \|_{2}^{2}  
        + 3 \|e_{p}^{m} \|_{2}^{2}+ \|e_{p}^{m-1} \|_{2}^{2} )  . 
    \end{aligned}
\end{equation} 
With an introduction of a unified error functional, $\mathcal{G}^{k} 
= \mathcal{F}^{k} + \frac{1}{2} \|e_{\bm{u}}^{k} \|_{2}^{2} 
+ \frac{\tau^{2}}{8} \|\nabla_{h} e_{\psi}^{m+1} \|_{2}^{2}$, we see that  
\begin{equation}
    \begin{aligned}
        \mathcal{G}^{m+1} - \mathcal{G}^{m} 
        & \le \tilde{C}_{9} \tau (\mathcal{G}^{m+1}+\mathcal{G}^{m}+\mathcal{G}^{m-1} )  
        + 4 C_{0}^{2} \tau \|\bm{\zeta}_u^{m} \|_{2}^{2} 
        + 4 \delta^{-1} \tau ( \|\zeta_{n}^{m} \|_{-1, h}^{2} 
        + \|\zeta_{p}^{m} \|_{-1, h}^{2} ) , 
\end{aligned}
\end{equation}
where $\tilde{C}_9 =\max (  \tilde{C}_8 + 6 \tilde{C}_3 C^* , 6 \tilde{C}_2 )$. Therefore, with sufficiently small $\tau$ and $h$, an application of discrete Gronwall inequality results the desired higher order convergence estimate
\begin{equation}\label{eqn: high order estimate}
   \mathcal{G}^{m+1} \leq C (\tau^8+h^8 ), 
        \text { so that } 
          \|e_{\bm{u}}^{m+1} \|_{2} 
        + \|e_{n}^{m+1} \|_{2} + \|e_{p}^{m+1} \|_{2} 
        + \tau \|\nabla_{h} e_{\psi}^{m+1} \|_{2} 
        \leq C (\tau^4+h^4 ), 
\end{equation}
in which the higher order truncation error accuracy, $ \| \bm{\zeta}_u^{m} \|_2 , \, \|\zeta_n^{m}\|_2, \,  \|\zeta_p^{m} \|_2 \le C (\tau^4+h^4 )$, has been applied in the analysis. This completes the refined error estimate.

\subsection{Recovery of the a-priori assumption \eqref{prior assumption}}
With the the higher order convergence estimate \eqref{eqn: high order estimate} in hand, the a-priori assumption in \eqref{prior assumption} is recovered at the next time step $t^{m+1}$ :
\begin{equation}
    \begin{aligned}
    &  \|e_{\bm{u}}^{m+1} \|_{2}, \, \|e_{n}^{m+1} \|_{2}, \, \|e_{p}^{m+1} \|_{2}  
    \leq C (\tau^{4}+h^{4}) 
    \leq \tau^{\frac{15}{4}}+h^{\frac{15}{4}}, 
    \\ 
    & \|\nabla_{h} e_{\psi}^{m+1} \|_{2} 
    \leq C (\tau^{3}+h^{3} ) \leq \tau^{\frac{11}{4}}+h^{\frac{11}{4}}, 
    \end{aligned}
\end{equation}
provided $\tau$ and $h$ are sufficiently small, under the linear refinement constraint. As a result, an induction analysis could be effectively applied, and the higher order convergence analysis is finished.

As a further result, the error estimate \eqref{eqn: convergence thm} for the ion concentration variables comes from a combination of \eqref{eqn: high order estimate} with the constructed expansion \eqref{eqn: construction} of the approximate solution $(\breve{\mathsf{N}}, \breve{\mathsf{P}})$, as well as the projection estimate \eqref{eqn: projection estimate}. The error estimate \eqref{eqn: convergence thm} for the pressure variable could be obtained by a similar argument. 

To get a convergence estimate for the electric potential variable $\phi$, we have to recall \eqref{def: error function}, the definition for $e_{\phi}^{m}$, and make use of an elliptic regularity:  
\begin{equation}\label{estimate: phi}
    \| e_{\phi}^{m} \|_{H_{h}^{2}} 
    \leq C \| \Delta_{h} e_{\phi}^{m} \|_{2} 
    \leq C \| e_{n}^{m} -e_{p}^{m} \|_{2} 
    \leq C (\tau^{4}+h^{4} ) .   
\end{equation}
Meanwhile, the following observation is made: 
\begin{equation}\label{estimate: ephi} 
\begin{aligned} 
  & 
  - \Delta_h (e_{\phi}^{m} - \mathring{\phi}^{m} ) 
    = \mathcal{P}_{h} (\tau^{2} ( \mathsf{P}_{\tau,1} - \mathsf{N}_{\tau, 1}) 
    + \tau^{3} ( \mathsf{P}_{\tau,2} - \mathsf{N}_{\tau, 2} ) 
    + h^2 ( \mathsf{P}_{h,1} - \mathsf{N}_{h,1} ) )
 \\ 
  & 
    \mbox{so that} \quad  \| e_{\phi}^{m} - \mathring{\phi}^{m} \|_{H_{h}^{2}} 
    \leq C \| \Delta_{h} (e_{\phi}^{m} - \mathring{\phi}^{m} ) \|_{2} 
    \leq \hat{C}_1 (\tau^{2} + h^{2}), 
\end{aligned} 
\end{equation} 
This in turn gives 
\begin{equation}
    \| \mathring{\phi}^{m} \|_{H_{h}^{2}} 
    \leq \| e_{\phi}^{m} \|_{H_{h}^{2}} 
    +  \| e_{\phi}^{m} - \mathring{\phi}^{m} \|_{H_{h}^{2}} 
    \leq C (\tau^{4}+h^{4} ) 
    + \hat{C}_1  (\tau^{2} + h^{2} )
    \leq (\hat{C}_1 +1 ) (\tau^{2} + h^{2} ) . 
\end{equation} 
This finishes the proof of Theorem \ref{thm: convergence}.

\section{Numerical results}\label{sec: validation}
In this section, we present a few numerical experiment results to validate the theoretical analysis, including the numerical tests for convergence rate, energy stability, mass conservation and the concentration positivity. Since the proposed numerical scheme~\eqref{scheme: main} is nonlinear and coupled, its implementation turns out to be quite technical. A linearized iteration solver is applied to implement the numerical algorithm. In more details, the nonlinear parts are evaluated in terms of the numerical solution at the previous stage, while the linear diffusion and temporal derivative parts are implicitly computed at each iteration stage. In turn, only a linear numerical solver is needed at each iteration stage, although the numerical scheme \eqref{scheme: main} is nonlinear. Such a linearized iteration solver has been widely reported for various nonlinear numerical schemes; in particular, a geometric iteration convergence rate has been theoretically justified for the Poisson-Nernst-Planck (PNP) system~\cite{LiuC2022a}, a highly nonlinear and singular gradient flow model. A similar theoretical analysis is expected for the linearized iteration approach to the numerical scheme~\eqref{scheme: main}, while the technical details will be left in the future works. Such a linearized iteration method is highly efficient; the theoretical analysis in~\cite{LiuC2022a} indicates a geometric iteration convergence rate, while the practical computations have revealed an even better iteration convergence rate in the implementation process. Only five to ten linear solvers are needed in the iteration process for most computational examples reported in this article, and the computational cost of the linear solver is comparable with a standard Poisson solver. Moreover, other than the linearized linear solver, some other alternate iteration approaches, such as preconditioned steepest descent (PSD) solver~\cite{ChenXC22a, feng2017}, could be chosen, and a comparison between difference iteration methods will be considered in the future works.

A two dimensional domain is set as $\Omega=(-2,2)^{2}$. At the initial time step, a first-order scheme is used to obtain the numerical solution. In the subsequent time steps, an iterative algorithm (similar to the one in \cite{Liu2023PNP}) is used to implement the fully nonlinear scheme \eqref{scheme: main}. 
 
The initial data is chosen as 
\begin{equation}\label{eqn: initial condition}
\begin{aligned}
&p_0 (x,y) = 0.6 + 0.2\cos\left(\pi x\right)\cos\left(0.5\pi y\right), 
\\
&n_0 (x,y) = 0.6 + 0.2\cos\left(0.5\pi x\right)\cos\left(\pi y\right),
\\ 
&u_0 (x,y) = -0.25\sin^{2}\left(\pi x\right)\sin\left(2\pi y\right), 
\\ 
&v_0 (x,y) = 0.25\sin\left(2\pi x\right)\sin^{2}\left(\pi y\right), 
\\ 
&\psi_0 (x,y) = \cos\left(0.5\pi x\right)\cos\left(0.5\pi y\right),
\end{aligned}
\end{equation} 
where periodic boundary condition is used. The computation is performed with a sequence of uniform mesh resolutions, and the time step size is taken as $\tau = 0.1h$. Since the exact solution could not be explicitly represented, we measure the Cauchy error to test the convergence rate, a similar approach to that of \cite{Wise2007Solving}. In particular, the error between coarse and fine grid spacings $h$ and $h/2$ is recorded by $\|e_{\zeta}\|=\|\zeta_{h}-\zeta_{h/2}\|$. We present the $\ell^{2}$ and $\ell^{\infty}$ errors of all the physical variables at a final time $T=0.1$. An almost perfect second order accuracy, in both time and space, has been observed in this numerical experiment, which agrees with the theoretical analysis. 

In addition, the simulation results are used to demonstrate the numerical performance to preserve certain physical properties. The total mass conservation of the ion concentration variables (over the computational domain) has been perfectly confirmed in the upper panel of the Figure~\ref{fig: energy_mass}. Moreover, in the same figure, a monotone dissipation property of the discrete total energy $E_{h}$ is also clearly observed, which confirms the theoretical analysis. To explore the positivity-preserving property, we focus on the evolution of the minimum concentration value, i.e., $C_{\min} := \min_{i,j}{(\min_{i,j}n_{i,j}^{m},\min_{i,j}p_{i,j}^{m})}$. As displayed in Figure~\ref{fig: positivity}, the numerical solutions of ion concentration variables remain positive all the time, even though their values could become very low. Overall, these numerical evidences have demonstrated that, the proposed numerical scheme is capable of maintaining mass conservation, total energy dissipation, and positivity at a discrete level. 

\begin{table}[t!]
\centering
\caption{The $\ell^2$ numerical error and convergence rate for $p$, $n$ and $\phi$ 
at $T = 0.1$, with $\tau = 0.1 h$, in a 2-D simulation with the initial data 
$\eqref{eqn: initial condition}$.}\label{table: L2PNP}
\vskip 0.3cm
\begin{tabular}{lcccccccccc}
\toprule
$h$ & Error$(p)$ & Order & Error$(n)$ & Order & Error$(\phi)$ & Order \\
\midrule
$2^{-3}$ & 1.9814e-02 &  --  & 1.9814e-02 &  --  & 1.2041e-03 &  --  \\
$2^{-4}$ & 4.2380e-03 & 2.23 & 4.2380e-03 & 2.23 & 1.4096e-04 & 3.09 \\
$2^{-5}$ & 1.0167e-03 & 2.06 & 1.0167e-03 & 2.06 & 2.6455e-05 & 2.41 \\
$2^{-6}$ & 2.5224e-04 & 2.01 & 2.5224e-04 & 2.01 & 6.6204e-06 & 2.00 \\
$2^{-7}$ & 6.3859e-05 & 1.98 & 6.3859e-05 & 1.98 & 2.0524e-06 & 1.70 \\
\bottomrule
\end{tabular}
\end{table} 

\begin{table}[t!]
\centering
\caption{The $\ell^2$ numerical error and convergence rate for $u$, $v$ and $\psi$ at $T = 0.1$, with $\tau = 0.1 h$, in a 2-D simulation with the initial condition  $\eqref{eqn: initial condition}$.}\label{table: L2NS}
\vskip 0.3cm
\begin{tabular}{lcccccccccc}
\toprule
$h$ & Error$(u)$ & Order & Error$(v)$ & Order & Error$(\psi)$ & Order \\
\midrule
$2^{-3}$ & 3.4045e-02 &  --  & 3.4045e-02 &  --  & 1.1670e-01 &  --  \\
$2^{-4}$ & 1.7835e-03 & 4.25 & 1.7835e-03 & 4.25 & 2.9702e-02 & 1.97 \\
$2^{-5}$ & 2.8403e-04 & 2.65 & 2.8403e-04 & 2.65 & 7.4107e-03 & 2.00 \\
$2^{-6}$ & 6.1956e-05 & 2.20 & 6.1956e-05 & 2.20 & 1.8390e-03 & 2.01 \\
$2^{-7}$ & 1.4938e-05 & 2.05 & 1.4938e-05 & 2.05 & 4.6457e-04 & 1.99 \\
\bottomrule
\end{tabular}
\end{table} 
\begin{table}[t!]
\centering
\caption{The $\ell^{\infty}$ numerical error and convergence rate for $p$, $n$ and $\phi$ at $T = 0.1$, with $\tau = 0.1 h$, in a 2-D simulation with the initial data 
$\eqref{eqn: initial condition}$.}\label{table: L8PNP}
\vskip 0.3cm
\begin{tabular}{lcccccccccc}
\toprule
$h$ & Error$(p)$ & Order & Error$(n)$ & Order & Error$(\phi)$ & Order \\
\midrule
$2^{-3}$ & 1.0010e-02 &  --  & 1.0010e-02 &  --  & 5.6139e-04 &  --  \\
$2^{-4}$ & 2.2635e-03 & 2.14 & 2.2635e-03 & 2.14 & 9.6335e-05 & 2.54 \\
$2^{-5}$ & 5.5796e-04 & 2.02 & 5.5796e-04 & 2.02 & 1.6750e-05 & 2.52 \\
$2^{-6}$ & 1.3907e-04 & 2.00 & 1.3907e-04 & 2.00 & 3.4815e-06 & 2.27 \\
$2^{-7}$ & 3.4712e-05 & 2.00 & 3.4712e-05 & 2.00 & 9.6136e-07 & 1.86 \\
\bottomrule
\end{tabular}
\end{table} 

\begin{table}[t!]
\centering
\caption{The $\ell^{\infty}$ numerical error and convergence rate for $u$, $v$ and $\psi$ at $T = 0.1$, with $\tau = 0.1 h$, in a 2-D simulation with the initial data 
$\eqref{eqn: initial condition}$.}\label{table: L8NS}
\vskip 0.3cm
\begin{tabular}{lcccccccccc}
\toprule
$h$ & Error$(u)$ & Order & Error$(v)$ & Order & Error$(\psi)$ & Order \\
\midrule
$2^{-3}$ & 8.8146e-03 &  --  & 8.8146e-03 &  --  & 5.8266e-02 &  --  \\
$2^{-4}$ & 4.7646e-04 & 4.21 & 4.7646e-04 & 4.21 & 1.4692e-02 & 1.99 \\
$2^{-5}$ & 1.0576e-04 & 2.17 & 1.0576e-04 & 2.17 & 3.4509e-03 & 2.09 \\
$2^{-6}$ & 2.5684e-05 & 2.04 & 2.5684e-05 & 2.04 & 7.4629e-04 & 2.21 \\
$2^{-7}$ & 6.3948e-06 & 2.01 & 6.3948e-06 & 2.01 & 1.6649e-04 & 2.16 \\
\bottomrule
\end{tabular}
\end{table}

\begin{figure}[t!]
\setlength{\abovecaptionskip}{-0.2cm}
\setlength{\belowcaptionskip}{-0.2cm}
\begin{center}
\includegraphics[width=1.0\textwidth]{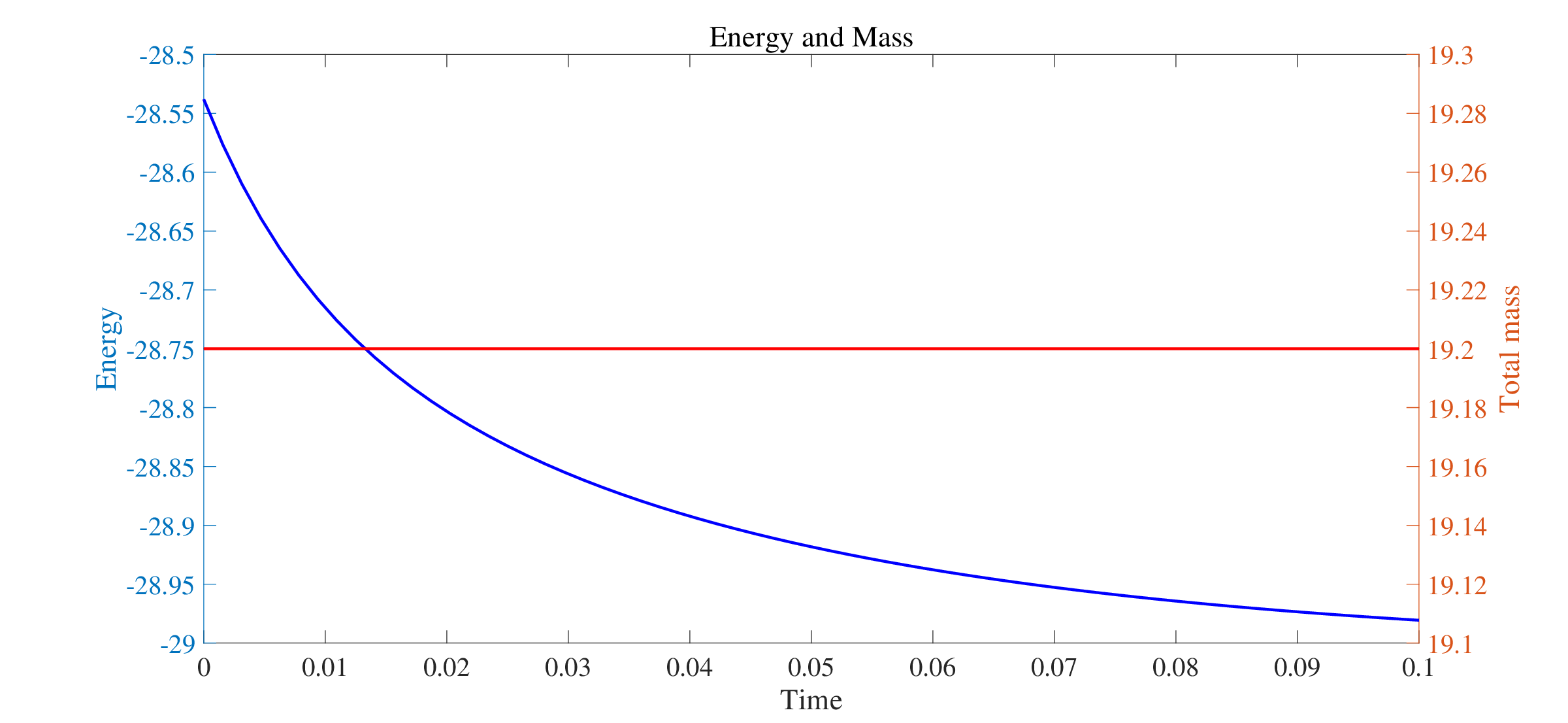}\vspace{-0.1cm}
\end{center}
\caption{Time evolution of the total energy functional and mass of positive ion for the numerical example with initial data $\eqref{eqn: initial condition}$.}  
\label{fig: energy_mass}
\end{figure}

\begin{figure}[t!]
\setlength{\abovecaptionskip}{-0.2cm}
\setlength{\belowcaptionskip}{-0.2cm}
\begin{center}
\includegraphics[width=1.0\textwidth]{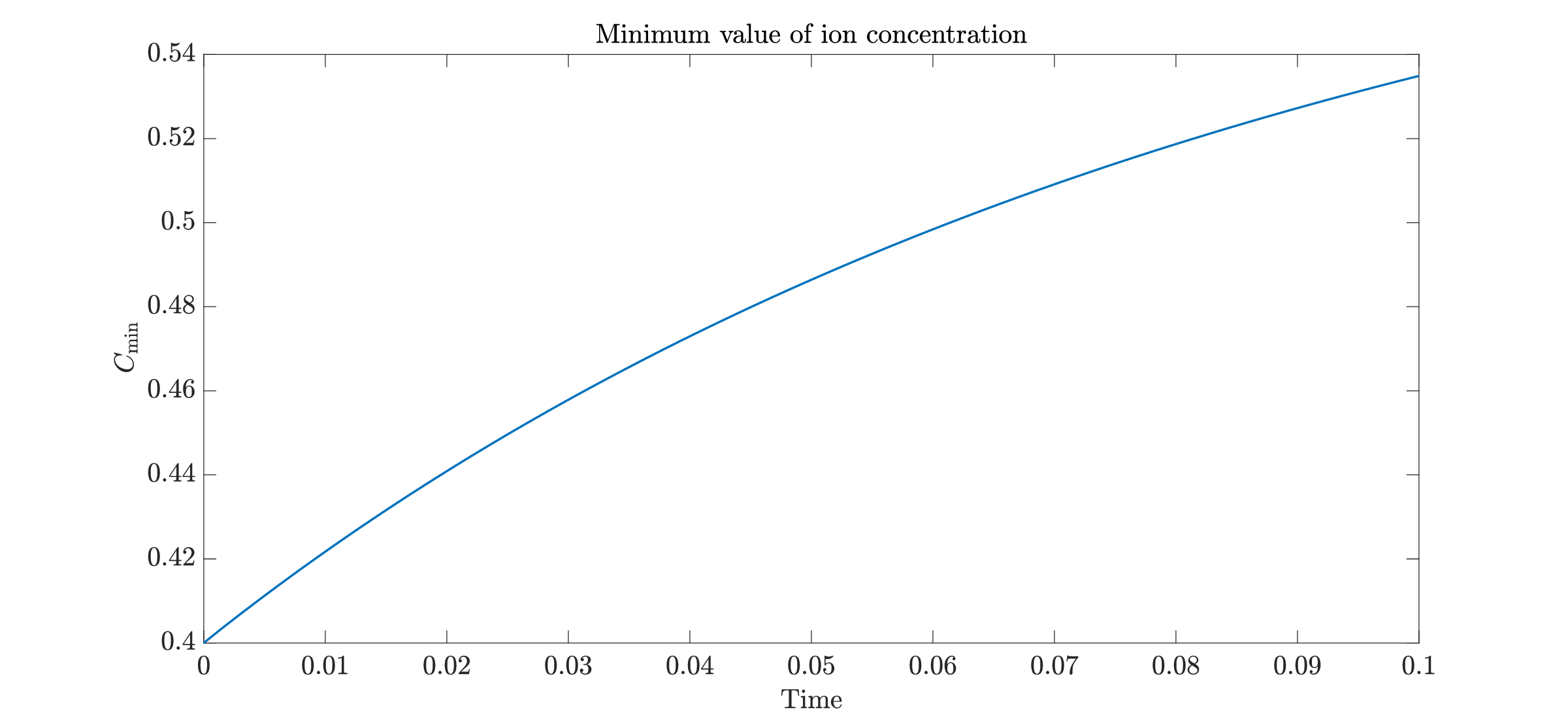}\vspace{-0.1cm}
\end{center}
\caption{Time evolution of the minimum value of positive ion for example, $\eqref{eqn: initial condition}$. 
The curve shows the minimum concentration of positive ion is always positive.}
\label{fig: positivity}
\end{figure}

\section{Conclusion}\label{sec: conclusion}
A second order accurate (in both time and space) numerical scheme has been proposed and analyzed for the 
Poisson-Nernst-Planck-Navier-Stokes (PNPNS) system. The PNP equation is reformulated as a non-constant mobility $H^{-1}$ gradient flow, and the Energetic Variational Approach (EnVarA) leads to a total energy dissipation law. The marker and cell (MAC) finite difference is taken as the spatial discretization, while a modified Crank-Nicolson approximation is applied to the singular logarithmic nonlinear term.  In turn, its inner product with the discrete temporal derivative exactly gives the corresponding nonlinear energy difference, so that the energy stability is ensured for the logarithmic part. The mobility function is explicitly computed by a second order accurate extrapolation formula, and the elliptic nature of the temporal derivative part is preserved and the unique solvability could be ensured. Moreover, nonlinear artificial regularization terms are added in the numerical design to facilitate the positivity-preserving analysis, with the help of the singularity associated with the logarithmic function. Meanwhile, the convective term in the PNP evolutionary equation and the fluid momentum equation are updated in a semi-implicit way, with second order accurate temporal approximation. The unique solvability/positivity preserving and total energy stability analysis has been theoretically established. In addition, an optimal rate convergence analysis is provided, in which the higher order asymptotic expansion for the numerical solution, the rough and refined error estimate techniques have to be included to accomplish such an analysis. In the authors' knowledge, this is the first work to combine the three theoretical properties for any second order accurate scheme for the PNPNS system.

\section*{Acknowledgements} 
The research of Y. Qin was partially supported by the National Natural Science Foundation of China (Grant No.12201369), and the Natural Science Foundation of Shanxi Province (Grant No. 202303021211004). The work of C. Wang was partially supported by National Science Foundation, DMS-2012269 and DMS-2309548.


\begin{thebibliography}{10}

\bibitem{Martin2004}
Martin~Z. Bazant, Katsuyo Thornton, and Armand Ajdari.
\newblock {Diffuse-charge dynamics in electrochemical systems}.
\newblock {\em Physical Review E}, 70(2):021506, 2004.

\bibitem{Ben2002Nonlinear}
Yuxing Ben and Hsueh-Chia Chang.
\newblock {Nonlinear Smoluchowski slip velocity and micro-vortex generation}.
\newblock {\em Journal of Fluid Mechanics}, 461:229–238, 2002.

\bibitem{Dieter2014Global}
Dieter Bothe, Andr\'{e} Fischer, and J\"{u}rgen Saal.
\newblock {Global well-posedness and stability of electrokinetic flows}.
\newblock {\em SIAM Journal on Mathematical Analysis}, 46(2):1263--1316, 2014.

\bibitem{WangchengCHNS2024}
Wenbin Chen, Jianyu Jing, Qianqian Liu, Cheng Wang, and Xiaoming Wang.
\newblock {A second order accurate, positivity-preserving numerical scheme of
  the Cahn-Hilliard-Navier-Stokes system with Flory-Huggins potential}.
\newblock {\em Communications in Computational Physics}, 35:633--661, 2024.

\bibitem{ChenCHNS2024}
Wenbin Chen, Jianyu Jing, Qianqian Liu, Cheng Wang, and Xiaoming Wang.
\newblock {Convergence analysis of a second order numerical scheme for the
  Flory-Huggins-Cahn-Hilliard-Navier-Stokes system}.
\newblock {\em Journal of Computational and Applied Mathematics}, 450:115981,
  2024.

\bibitem{WangchengCHNS2022}
Wenbin Chen, Jianyu Jing, Cheng Wang, and Xiaoming Wang.
\newblock {A positivity preserving, energy stable finite difference scheme for
  the Flory-Huggins-Cahn-Hilliard-Navier-Stokes system}.
\newblock {\em Journal of Scientific Computing}, 92:31, 2022.

\bibitem{WangchengCH2021}
Wenbin Chen, Jianyu Jing, Cheng Wang, Xiaoming Wang, and Steven~M. Wise.
\newblock {A modified Crank-Nicolson numerical scheme for the Flory-Huggins
  Cahn-Hilliard model}.
\newblock {\em Communications in Computational Physics}, 31(1):60--93, 2022.

\bibitem{Chen2019Positivity}
Wenbin Chen, Cheng Wang, Xiaoming Wang, and Steven~M. Wise.
\newblock {Positivity-preserving, energy stable numerical schemes for the
  Cahn-Hilliard equation with logarithmic potential}.
\newblock {\em Journal of Computational Physics: X}, 3:100031, 2019.

\bibitem{ChenXC22a}
Xiaochun Chen, Cheng Wang, and Steven~M. Wise.
\newblock A preconditioned steepest descent solver for the {Cahn-Hilliard}
  equation with variable mobility.
\newblock {\em Int. J. Numer. Anal. Model.}, 19(8):839--863, 2022.

\bibitem{Peter2019On}
Peter Constantin and Mihaela Ignatova.
\newblock {On the Nernst–Planck–Navier–Stokes system}.
\newblock {\em Archive for Rational Mechanics and Analysis}, 232:1379--1428,
  2019.

\bibitem{Peter2022Existence}
Peter Constantin, Mihaela Ignatova, and Fizay-Noah Lee.
\newblock {Existence and stability of nonequilibrium steady states of
  Nernst–Planck–Navier–Stokes systems}.
\newblock {\em Physica D: Nonlinear Phenomena}, 442(15):133536, 2022.

\bibitem{Qin2023PNP}
Lixiu Dong, Dongdong He, Yuzhe Qin, and Zhengru Zhang.
\newblock {A positivity-preserving, linear, energy stable and convergent
  numerical scheme for the Poisson–Nernst–Planck (PNP) system}.
\newblock {\em Journal of Computational and Applied Mathematics}, 444:115784,
  2023.

\bibitem{E2002MAC}
Weinan E and Jian-Guo Liu.
\newblock {Projection Method III: Spatial Discretization on the Staggered
  Grid}.
\newblock {\em Mathematics of Computation}, 71(237):27–47, 2002.

\bibitem{feng2017}
Wenqiang Feng, Abner~J. Salgado, Cheng Wang, and Steven~M. Wise.
\newblock Preconditioned steepest descent methods for some nonlinear elliptic
  equations involving p-{Laplacian} terms.
\newblock {\em J. Comput. Phys.}, 334:45--67, 2017.

\bibitem{Harlow1965MAC}
Francis~H. Harlow and J.~Eddie Welch.
\newblock {Numerical Calculation of Time‐Dependent Viscous Incompressible
  Flow of Fluid with Free Surface}.
\newblock {\em Physics of Fluids}, 8(12):2182--2189, 1965.

\bibitem{He2019PNP}
Dongdong He, Kejia Pan, and Xiaoqiang Yue.
\newblock {A positivity preserving and free energy dissipative difference
  scheme for the Poisson–Nernst–Planck system}.
\newblock {\em Journal of Scientific Computating}, 81:436--458, 2019.

\bibitem{He2018Mixed}
Mingyan He and Pengtao Sun.
\newblock {Mixed finite element analysis for the
  Poisson–Nernst–Planck/Stokes coupling}.
\newblock {\em Journal of Computational and Applied Mathematics},
  341(15):61--79, 2018.

\bibitem{Joseph2002Analytical}
Joseph~W. Jerome.
\newblock {Analytical approaches to charge transport in a moving medium}.
\newblock {\em Transport Theory and Statistical Physics}, 31(4-6):333--366,
  2002.

\bibitem{Liu2021PNP}
Chun Liu, Cheng Wang, Steven~M. Wise, Xingye Yue, and Shenggao Zhou.
\newblock {A positivity-preserving, energy stable and convergent numerical
  scheme for the Poisson-Nernst-Planck system}.
\newblock {\em Mathematics of Computation}, 90(331):2071--2106, 2021.

\bibitem{LiuC2022a}
Chun Liu, Cheng Wang, Steven~M. Wise, Xingye Yue, and Shenggao Zhou.
\newblock An iteration solver for the {Poisson-Nernst-Planck} system and its
  convergence analysis.
\newblock {\em J. Comput. Appl. Math.}, 406:114017, 2022.

\bibitem{Liu2023PNP}
Chun Liu, Cheng Wang, Steven~M. Wise, Xingye Yue, and Shenggao Zhou.
\newblock {A second order accurate, positivity preserving numerical method for
  the Poisson-Nernst-Planck system and its convergence analysis}.
\newblock {\em Journal of Scientific Computing}, 97:23, 2023.

\bibitem{Liu2022JCP}
Hailiang Liu, Zhongming Wang, Peimeng Yin, and Hui Yu.
\newblock Positivity-preserving third order {DG} schemes for
  {Poisson–Nernst–Planck} equations.
\newblock {\em Journal of Computational Physics}, 452:110777, 2022.

\bibitem{Liu2017Efficient}
Xiaoling Liu and Chuanju Xu.
\newblock {Efficient Time-Stepping/Spectral Methods for the
  Navier-Stokes-Nernst-Planck-Poisson Equations}.
\newblock {\em Communications in Computational Physics}, 21(5):1408--1428,
  2017.

\bibitem{Igor2007The}
Igor Nazarov and Keith Promislow.
\newblock {The impact of membrane constraint on PEM fuel cell water
  management}.
\newblock {\em Journal of The Electrochemical Society}, 154(7):623--630, 2007.

\bibitem{Andreas2010Convergent}
Andreas Prohl and Markus Schmuck.
\newblock {Convergent finite element discretizations of the
  Navier-Stokes--Nernst-Planck-Poisson system}.
\newblock {\em ESAIM: Mathematical Modelling and Numerical Analysis},
  44:531--571, 2010.

\bibitem{Qin2023POF}
Yuzhe Qin, Huaxiong Huang, Zilong Song, and Shixin Xu.
\newblock {Droplet dynamics: A phase-field model of mobile charges,
  polarization, and its leaky dielectric approximation}.
\newblock {\em Physics of Fluids}, 35(8):083327, 2023.

\bibitem{Qin2021BFS}
Yuzhe Qin, Cheng Wang, and Zhengru Zhang.
\newblock {A positivity-preserving and convergent numerical scheme for the
  binary fluid-surfactant system}.
\newblock {\em International Journal of Numerical Analysis and Modeling},
  18(3):399--425, 2021.

\bibitem{Rubinstein1990Electro}
Isaak Rubinstein.
\newblock {\em {Electro-Diffusion of Ions}}.
\newblock Society for Industrial and Applied Mathematics, 1990.

\bibitem{Markus2009Analysis}
Markus Schmuck.
\newblock {Analysis of the Navier-Stokes-Nernst-Planck-Poisson system}.
\newblock {\em Mathematical Models and Methods in Applied Sciences},
  19(06):993--1014, 2009.

\bibitem{Shen2021Unconditionally}
Jie Shen and Jie Xu.
\newblock {Unconditionally positivity preserving and energy dissipative schemes
  for Poisson–Nernst–Planck equations}.
\newblock {\em Numerische Mathematik}, 148:671--697, 2021.

\bibitem{Tsai2005Numerical}
Chien-Hsiung Tsai, Ruey-Jen Yang, Chang-Hsien Tai, and Lung-Ming Fu.
\newblock {Numerical simulation of electrokinetic injection techniques in
  capillary electrophoresis microchips}.
\newblock {\em Electrophoresis}, 26:674--686, 2005.

\bibitem{Wang2000MAC}
Cheng Wang and Jian-Guo Liu.
\newblock {Convergence of gauge method for incompressible flow}.
\newblock {\em Mathematics of Computation}, 69(232):1385–1407, 2000.

\bibitem{Wang2016SIAM}
Yong Wang, Chun Liu, and Zhong Tan.
\newblock {A generalized Poisson-Nernst-Planck-Navier-Stokes model on the fluid
  with the crowded charged particles: derivation and its well-posedness}.
\newblock {\em SIAM Journal on Mathematical Analysis}, 48(5):3191--3235, 2016.

\bibitem{Wise2007Solving}
Steven~M. Wise, Junseok Kim, and John Lowengrub.
\newblock {Solving the regularized, strongly anisotropic Cahn-Hilliard equation
  by an adaptive nonlinear multigrid method}.
\newblock {\em Journal of Scientific Computing}, 226(1):414--446, 2007.

\bibitem{Zhou2023Efficient}
Xiaolan Zhou and Chuanju Xu.
\newblock {Efficient time-stepping schemes for the
  Navier-Stokes-Nernst-Planck-Poisson equations}.
\newblock {\em Computer Physics Communications}, 289:108763, 2023.

\end{thebibliography}
\end{document}